\documentclass[pdflatex,sn-mathphys-num]{sn-jnl}

\usepackage{amsmath,amssymb,amsfonts,amsthm,amscd,mathtools}
\usepackage{stmaryrd,mathrsfs}
\usepackage{enumerate}
\usepackage{tikz}
\usepackage{tikz-cd}
\usetikzlibrary{arrows,patterns}
\usepackage{url}

\newtheorem{theorem}{Theorem}[section]
\newtheorem{lemma}[theorem]{Lemma}
\newtheorem{prop}[theorem]{Proposition}

\newtheorem{defn}[theorem]{Definition}
\newtheorem{example}[theorem]{Example}
\newtheorem{remark}[theorem]{Remark}
\newtheorem{definition}{Definition}

\numberwithin{equation}{section}

\newcommand{\Om}{\Omega}
\newcommand{\bas}{\mathrm{bas}}
\newcommand{\dd}{\mathrm{d}}
\newcommand{\Lie}{\mathrm{Lie}}
\newcommand{\rrto}{\rightrightarrows}
\newcommand{\BSS}{\mathrm{BSS}}

\begin{document}

\title[Equivariant basic cohomology of Lie groupoids]{Equivariant basic cohomology of Lie groupoids}

\author[1]{\fnm{Fengyu} \sur{Jiang}}\email{jfyscu@foxmail.com}

\author*[1]{\fnm{Yang} \sur{Yang}}\email{yangyang7121@outlook.com}

\author[1]{\fnm{Bohui} \sur{Chen}}\email{chenbohui@scu.edu.cn}

\affil[1]{\orgdiv{Department of Mathematics},
\orgname{Sichuan University},
\orgaddress{\city{Chengdu}, \postcode{610065}, \country{China}}}

\abstract{This paper develops equivariant basic cohomology for Lie groupoids equipped with weak actions of Lie groups. The weak action is encoded by a Kan fibration over the classifying groupoid, and the basic complex of the fiber is shown to carry the structure needed for Weil and Cartan models. The construction is compared with Bott--Shulman--Stasheff cohomology, where the equivariant theory is obtained from the quotient groupoid. For orbifolds, basic forms are interpreted as orbifold differential forms, and the resulting equivariant basic cohomology is used to formulate differential-geometric constructions such as equivariant integration and localization. The paper also studies the induced weak action on the inertia groupoid and uses it to define an equivariant refinement of the Chen--Ruan cohomology ring. In this framework the sectorwise equivariant cohomology, obstruction bundle, equivariant Euler class, Gysin maps and three-point functions are assembled into an equivariant Chen--Ruan product whenever the corresponding pairing is nondegenerate.}

\keywords{Lie groupoids, basic cohomology, equivariant cohomology, orbifolds, Chen--Ruan cohomology, localization}

\pacs[MSC Classification]{Primary 22A22; Secondary 55N91, 55N32}

\maketitle

\section{Introduction}

Let
\[
        X=(X_1\rightrightarrows X_0)
\]
be a Lie groupoid with source and target maps
\(
        s,t:X_1\longrightarrow X_0 .
\)
The complex of basic forms on \(X\) is defined by
\[
        \Omega^k_{\mathrm{bas}}(X)
        :=
        \left\{
        \alpha\in \Omega^k(X_0)
        \ \middle|\
        s^*\alpha=t^*\alpha
        \right\}.
\]
The differential is the ordinary de Rham differential on \(X_0\), which
preserves the above condition. Thus one obtains a cochain complex
\[
        \bigl(\Omega^\bullet_{\mathrm{bas}}(X),d\bigr).
\]
For an action groupoid
\[
        G\ltimes M\rightrightarrows M,
\]
this recovers the classical complex of basic forms for the action, namely
forms on \(M\) which are invariant and horizontal with respect to the
infinitesimal \(G\)-action. More generally, the complex
\(\Omega^\bullet_{\mathrm{bas}}(X)\) may be viewed as the de Rham complex of
forms descending to the orbit space of the groupoid.

It is useful to contrast this with the Bott--Shulman--Stasheff complex $\Omega^\bullet_\BSS(X)$. The BSS complex is the total de Rham complex of the nerve $X_\bullet$, namely forms on all $X_p$'s with the de Rham and simplicial differentials. It computes the de Rham cohomology of the groupoid/classifying stack rather than the ordinary basic/orbit-space theory. Recent expositions describe the BSS complex as the double complex $\Omega^\ast(X_\bullet)$, whose cohomology is the de Rham cohomology of the Lie groupoid.

To summarize, the basic complex $\Omega^\bullet_\bas(X)$
is the Lie-groupoid analogue of basic forms for actions and foliations. For proper Lie groupoids, it models differential forms on the orbit space, especially when the orbit space is viewed diffeologically or as an orbispace. The main references are Pflaum--Posthuma--Tang~\cite{PflaumPosthumaTang2014} and Watts~\cite{Watts2022}. The BSS complex, by contrast, models the de Rham cohomology of the stack/classifying space presented by the groupoid.

Let $G$ be a Lie group.
There are several  related notions of a group action on a Lie groupoid, depending
on how strictly one wants the action to be represented.
The most direct notion is the strict one.  Let
\[
        X=(X_1\rightrightarrows X_0)
\]
be a Lie groupoid and let \(G\) be a Lie group.  A strict \(G\)-action on \(X\)
consists of smooth \(G\)-actions on \(X_0\) and \(X_1\) such that, for every
\(g\in G\), the induced maps
\[
        g:X_0\to X_0,
        \qquad
        g:X_1\to X_1
\]
define a Lie groupoid automorphism of \(X\).  Equivalently, the source, target,
unit, inverse, and multiplication maps of \(X\) are all \(G\)-equivariant.  Lie
groupoids equipped with such compatible \(G\)-actions are often called
\(G\)-groupoids.

However, strict actions are not sufficiently invariant under Morita
equivalence.  If a Lie groupoid is regarded as a presentation of a
differentiable stack or an orbifold, replacing it by a Morita-equivalent
presentation may destroy the strictness of the action.  Thus, in stacky or
orbifold geometry, the natural notion of symmetry is generally weaker and
should be formulated in a Morita-invariant way.

One standard approach is to use Hilsum--Skandalis bibundles, or generalized
morphisms, between Lie groupoids.  In this language, a weak action of \(G\) on a
Lie groupoid \(X\) is not required to be given by strict automorphisms of a
fixed presentation.  Instead, the elements of \(G\) act by Morita
self-equivalences of \(X\), and the group law is implemented up to coherent
isomorphism of bibundles.  This is the formulation naturally compatible with
the bicategory of Lie groupoids, principal bibundles, and bibundle maps.
Equivalently, from the viewpoint of differentiable stacks, a weak \(G\)-action
is an action on the stack presented by \(X\).  

The classifying stack \(BG\)
classifies principal \(G\)-bundles, and a stack over \(BG\) may be interpreted
as a quotient object for a \(G\)-action.  In the Lie groupoid presentation used
in this paper, we encode a weak \(G\)-action by a Kan fibration
\[
        f:Y\longrightarrow BG .
\]
Its fiber
\[
        X=\ker(f)
\]
is the Lie groupoid being acted on.  Thus \(Y\) should be thought of as the
total groupoid presenting the quotient of \(X\) by the weak \(G\)-action. 
 We formally denote $Y$ by $X\rtimes BG$.
The Kan fibration formalism is compatible with the approach using Hilsum-Skandalis morphisms~\cite{HilsumSkandalis1987} and the approach using generalized morphisms~\cite{delHoyo2013,MoerdijkMrcun2003}.

This point of view is standard in stack-theoretic treatments of group actions.
Strict group actions on Lie groupoids are discussed in the language of
\(G\)-groupoids or compatible groupoid actions.  The Morita-invariant
interpretation is developed using differentiable stacks,bibundles and Kan fibrations.  Useful
references include Moerdijk--Mr\v{c}un~\cite{MoerdijkMrcun2003} for Lie groupoids and foliations,
Mackenzie~\cite{Mackenzie2005} for Lie groupoids and Lie algebroids, Behrend--Xu~\cite{BehrendXu2011} for differentiable
stacks and \(BG\), Lerman~\cite{Lerman2010} for orbifolds as stacks, Blohmann~\cite{Blohmann2008} for stacky Lie
groups and smooth principal bibundles, Bursztyn--Noseda--Zhu~\cite{BursztynNosedaZhu2020} for principal actions of stacky
Lie groupoids, Ginot--Noohi~\cite{GinotNoohi2012} for weak group actions on stacks, Chen--Du--Jiang~\cite{ChenDuJiangSubmitted}
 and Du Li~\cite{DuLi2014} for Kan fibrations.

Consider a weak $G$ action on a Lie groupoid $X$. The slogan is the $G$ equivariant $\mathcal H$ cohomology  $\mathcal H^\ast_G(X)$ of $X$ is 
\begin{equation}\label{E_SloganofH}
\mathcal H^\ast_G(X)=\mathcal H^\ast(X\rtimes BG).
\end{equation}
 For instance, when $\mathcal H$ is the BSS cohomology $H^\ast_\BSS$, this is verified for certain important cases, and, for general $X$,
$H^\ast_{G,\BSS}(X)$ is defined to be $H^\ast_{\BSS}(X\rtimes BG) $ once the existence of $X\rtimes BG$ is proved. In particular,
\[
H^\ast_{G,\BSS}(\bullet)
:=H^\ast_G(\bullet)=H^\ast_\BSS(BG).
\]
Here we use the fact $\bullet\rtimes BG=BG$. However, 
\[
H^\ast_{G,\bas}(\bullet)
:=H^\ast_G(\bullet)\not=H^\ast_\bas(BG),
\]
hence the slogan \eqref{E_SloganofH} does not apply to $H^\ast_{G,\bas}$. 

It is known that $\Omega^\bullet_\bas$ is Morita invariant. Hence it is a reasonable object for Lie groupoids. In particular, when $X$ is an orbifold,  
the complex of basic forms has a direct differential-geometric meaning.  For a
smooth manifold \(M\), the complex of basic forms is de Rham complex \(\Omega^\bullet(M)\) 
of differential forms, which is widely used such as defining symplectic forms, volume forms,
characteristic forms, de Rham representatives, and integration over \(M\).
For an orbifold, basic forms are  the orbifold differential forms. Thus
\[
        \Omega^\bullet_{\bas}(X)
\]
plays the role of \(\Omega^\bullet(M)\) for an orbifold.
This complex is not merely a technical subcomplex.  It is the natural complex
for differential geometry on the orbifold.  For example, if \(X\) is compact
and oriented, and
\[
        \alpha\in \Omega^{\dim X}_{\bas}(X),
\]
then \(\alpha\) has a well-defined orbifold integral
\[
        \int_X^{\mathrm{orb}}\alpha .
\]
Therefore, equivariantizing the basic complex is meaningful in its own right.
If a compact Lie group \(G\) acts weakly on an orbifold \(X\), then a
\(G^\ast\)-algebra structure on
\[
        \Omega^\bullet_{\bas}(X)
\]
allows one to define equivariant orbifold differential forms, equivariant
characteristic classes, equivariant symplectic forms, and equivariant
integration and localization formulas. 

This theory should be distinguished from the Bott--Shulman--Stasheff theory.
The BSS complex is built from differential forms on the full nerve of the
groupoid and is more stacky, or classifying-space-like.  By contrast, the basic
complex is the direct analogue of the ordinary de Rham complex of a manifold.
Thus the equivariant cohomology of
\[
        \Omega^\bullet_{\bas}(X)
\]
is the natural setting for differential-geometric constructions on orbifolds.

Chen--Ruan cohomology ring is defined on orbifold and is also in this differential-geometric flavor. The underlying vector space of Chen--Ruan cohomology is still differential-geometric: it consists of de Rham cohomology classes of orbifold sectors, with degree shifts given by the age grading.
The product is also geometric. It is defined using the double or triple inertia orbifold, evaluation maps, the obstruction bundle, its Euler class, and orbifold integration. In this sense, Chen--Ruan cohomology is not merely a homotopy-theoretic or classifying-space construction; it is a geometric cohomology theory built from orbifold forms, sector geometry, obstruction bundles, and integration.

Therefore,
equivariantizing Chen--Ruan cohomology is also natural: one equivariantizes the sectorwise basic forms, the obstruction bundle, the Euler class, the Gysin maps, and the three-point function. The result is an equivariant refinement of the orbifold intersection theory encoded by the Chen--Ruan ring. Existing equivariant Chen--Ruan theories have mostly been developed for strict torus actions and symplectic reductions \cite{GoldinHolmKnutson2007,HolmMatsumura2012}, and for global quotient orbifolds \cite{FantechiGottsche2003,EdidinJarvisKimura2010}. 

This paper is organized as the following. In \S\ref{Sec_equiv_bas}, we develop the Weil model and Cartan model of equivariant basic cohomology for any Lie groupoid; in \S\ref{Sec_equiv_bss}, we use the same argument to develop the Weil model of equivariant BSS cohomology and show it consistents with the  one defined via principal \eqref{E_SloganofH}; in \S\ref{Sec_equiv_orb}, we focus on the study of the equivariant basic cohomology of orbifolds from the  differential geometric point of view, in particular, the integral localization formula is proved; in \S\ref{Sec_equiv_CR}, we equivariantize the Chen-Ruan cohomology ring.

\section{Equivariant basic cohomology of Lie groupoids}
\label{Sec_equiv_bas}

For proper Lie groupoids, Pflaum--Posthuma--Tang prove a de Rham theorem for
the complex of basic differential forms, while Watts identifies
\(\Omega^\bullet_{\mathrm{bas}}(X)\) with the de Rham complex of the orbit
space \(|X|\) equipped with its quotient diffeology; see
\cite{PflaumPosthumaTang2014,Watts2022}. 

In this section, we develop the equivariant basic cohomology for general Lie groupoids. Let $X$ be  a Lie groupoid and $G$ a Lie group. We use  Kan fibration formalism to describe a weak group $G$ action on $X$ (cf. \S\ref{Subsec_gpaction}). The key is to show that $\Omega^\bullet_\bas(X)$ is a $G^\ast$-algebra, which allows us to define the Weil/Cartan model of $H^\ast_{G,\bas}(X)$.

The organization of the section is as the following. We review the basic complex and group action on Lie groupoids in \S\ref{Subsec_bas} and \S\ref{Subsec_gpaction} respectively; in \S\ref{Subsec_equiv_bas}, we show that $\Omega_\bas(X)$ is a $G^\ast$ algebra, hence define the equivariant version of its cohomology; in \S\ref{Subsec_free_bas}, we show that if $G$ action on $X$ is locally free, $H^\ast_{G,\bas}(X)$ coincides with $H^\ast_{\bas}(X\rtimes BG).$ 

\subsection{The Basic complex of Lie groupoids}
\label{Subsec_bas}

A \emph{Lie groupoid} is a groupoid object in the category of smooth manifolds. Thus it consists of a manifold of objects \(X_0\), a manifold of arrows \(X_1\), source and target submersions
\[
  s_X,t_X:X_1\longrightarrow X_0,
\]
a unit map \(u_X:X_0\to X_1\), an inverse map \(i_X:X_1\to X_1\), and a smooth multiplication map
\[
  m_X:X_1\,{}_{s_X}\!\times_{t_X}\,X_1\longrightarrow X_1,
  \qquad (g,h)\longmapsto gh,
\]
defined for composable arrows and satisfying the usual groupoid axioms. We write
\[
  X=(X_1\rrto X_0).
\]
An arrow \(g\in X_1\) with \(s_X(g)=x\) and \(t_X(g)=y\) may be regarded as an isomorphism from \(x\) to \(y\). The orbits of \(X\) are the equivalence classes in \(X_0\) generated by the existence of arrows between points.

The Lie algebroid of \(X\) is
\[
  A=\ker(\dd s_X)|_{X_0},
  \qquad \rho=\dd t_X|_{A}:A\longrightarrow TX_0,
\]
where \(X_0\) is identified with the unit submanifold of \(X_1\). The image of the anchor \(\rho\) gives the tangent directions to the orbits.

\begin{defn}[Basic forms]
Let \(X\) be a Lie groupoid. A differential form \(\alpha\in\Om^{k}(X_0)\) is called \emph{basic} if
\(
  s_X^{*}\alpha=t_X^{*}\alpha.
\)
The space of \(k\)-forms basic for \(X\) is denoted by
\(
  \Om^{k}_{\bas}(X).
 \) 
\end{defn}

Since exterior differentiation commutes with pullback,
so \(\dd\alpha\) is basic whenever \(\alpha\) is basic. Hence the basic forms form a subcomplex of the de Rham complex of \(X_0\):
\[
  \left(\Om^{\bullet}_{\bas}(X),\dd\right)
  \subseteq
  \left(\Om^{\bullet}(X_0),\dd\right).
\]
This is called the \emph{basic deRham complex} or simply the \emph{basic complex} of the Lie groupoid \(X\). Its cohomology is denoted
\[
  H^{\bullet}_{\bas}(X)
  :=H^{\bullet}\left(\Om^{\bullet}_{\bas}(X),\dd\right).
\]

The condition \(s_X^{*}\alpha=t_X^{*}\alpha\) says that \(\alpha\) is constant along arrows of the groupoid. It simultaneously encodes two familiar requirements: invariance under the arrows and horizontality along the orbit directions.
Infinitesimally, if \(A=\Lie(X)\) and \(\rho:A\to TX_0\) is its anchor, then a basic form satisfies
\[
  \iota_{\rho(a)}\alpha=0,
  \qquad
  \mathcal{L}_{\rho(a)}\alpha=0,
  \qquad a\in\Gamma(A).
\]
Conversely, when the source fibers of \(X\) are connected, these infinitesimal conditions are equivalent to the groupoid-level condition \(s_X^{*}\alpha=t_X^{*}\alpha\). For a non-source-connected groupoid, the infinitesimal conditions must be supplemented by invariance under the remaining discrete arrows.

\begin{example}[Action groupoid]
Let a Lie group \(K\) act smoothly on a manifold \(M\). The action groupoid is
\[
  K\ltimes M\rightrightarrows M,
  \qquad
  s(k,x)=x,
  \qquad
  t(k,x)=k\cdot x.
\]
A form \(\alpha\in\Om^{k}(M)\) is basic for this groupoid exactly when
\[
  s^{*}\alpha=t^{*}\alpha
  \qquad\text{on }K\times M.
\]
This is the usual condition that \(\alpha\) be invariant under the action and horizontal along the fundamental vector fields generated by the action.
\end{example}

If the orbit space \(X_0/X\) is a smooth manifold and the quotient map
\[
  \pi:X_0\longrightarrow X_0/X
\]
is a submersion, then pullback identifies forms on the quotient with basic forms:
\[
  \pi^{*}:\Om^{\bullet}(X_0/X)\xrightarrow{\cong}\Om^{\bullet}_{\bas}(X).
\]
Thus, in the regular smooth-quotient case, the basic complex is the de Rham complex of the quotient written upstairs on \(X_0\).

It is observed that $\Omega^\ast_\bas(X)$ is a Morita invariance. 
\begin{theorem}[Morita invariance of the basic complex, Proposition 8.3, \cite{PflaumPosthumaTang2014}]
Let
\[
X=(X_1\rightrightarrows X_0),
\qquad
Y=(Y_1\rightrightarrows Y_0)
\]
be Lie groupoids, and let
\[
f:X\to Y
\]
be an equivalence. Then pullback along $f_0$ induces an isomorphism of
cochain complexes
\[
f_0^*:
\Omega^\bullet_{\mathrm{bas}}(Y)
\xrightarrow{\;\cong\;}
\Omega^\bullet_{\mathrm{bas}}(X).
\]
hence, if $f$ is a Morita equivalence, $f^\ast_0$ is still an isomorphism. 
Consequently,
\[
H^\bullet_{\mathrm{bas}}(Y)
\cong
H^\bullet_{\mathrm{bas}}(X).
\]
\end{theorem}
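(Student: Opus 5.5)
Here an equivalence \(f:X\to Y\) means a Lie groupoid morphism that is \emph{fully faithful}, i.e.
\[
X_1\cong (X_0\times X_0)\times_{Y_0\times Y_0}Y_1,
\]
and \emph{essentially surjective}, i.e.
\[
t\circ\mathrm{pr}_2:\;X_0\,{}_{f_0}\!\times_{s}\,Y_1\to Y_0
\]
is a surjective submersion. The plan is to show that \(f_0^*\) maps basic forms to basic forms, is injective, and is surjective, using the auxiliary manifold \(P:=X_0\,{}_{f_0}\!\times_{s_Y}\,Y_1\).

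\textbf{Well-definedness.} Since \(f\) is a morphism, \(s_Y\circ f_1=f_0\circ s_X\) and \(t_Y\circ f_1=f_0\circ t_X\). So if \(s_Y^*\beta=t_Y^*\beta\), then
\[
s_X^*f_0^*\beta=f_1^*s_Y^*\beta=f_1^*t_Y^*\beta=t_X^*f_0^*\beta .
\]
Since \(f_0^*\) commutes with \(\dd\), it is a cochain map \(\Om^\bullet_\bas(Y)\to\Om^\bullet_\bas(X)\).

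\textbf{Injectivity.} Let \(q=t_Y\circ\mathrm{pr}_2:P\to Y_0\), a surjective submersion. On \(P\) we have \(f_0\circ\mathrm{pr}_1=s_Y\circ\mathrm{pr}_2\). For basic \(\beta\),
\[
q^*\beta=\mathrm{pr}_2^*t_Y^*\beta=\mathrm{pr}_2^*s_Y^*\beta=\mathrm{pr}_1^*f_0^*\beta .
\]
If \(f_0^*\beta=0\), then \(q^*\beta=0\). Pullback along a surjective submersion is injective on forms, so \(\beta=0\).

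\textbf{Surjectivity (the main step).} Given \(\alpha\in\Om^k_\bas(X)\), set \(\tilde\alpha=\mathrm{pr}_1^*\alpha\) on \(P\). We show \(\tilde\alpha\) descends along \(q\), using the standard criterion: a form on the source of a surjective submersion \(q\) is a pullback iff its two pullbacks to \(P\times_{Y_0}P\) agree.

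A point of \(P\times_{Y_0}P\) is \((x,g,x',g')\) with \(t(g)=t(g')\). The arrow \(h=g'^{-1}g:f_0(x)\to f_0(x')\) lifts uniquely, by full faithfulness, to an arrow \(\tilde h\in X_1\) from \(x\) to \(x'\). This gives a smooth map
\[
\phi:P\times_{Y_0}P\to X_1,\qquad \phi(x,g,x',g')=\tilde h .
\]
The two projections of \(P\times_{Y_0}P\) to \(X_0\) (through \(\mathrm{pr}_1\)) are \(s_X\circ\phi\) and \(t_X\circ\phi\). Hence the two pullbacks of \(\tilde\alpha\) are \(\phi^*s_X^*\alpha\) and \(\phi^*t_X^*\alpha\), which agree because \(\alpha\) is basic. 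So \(\tilde\alpha=q^*\beta\) for a unique \(\beta\in\Om^k(Y_0)\).

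\textbf{Remaining checks.}
\begin{enumerate}[(i)]
\item \(\beta\) is basic. Compare \(s_Y^*\beta\) and \(t_Y^*\beta\) after pulling back along the surjective submersion \(Y_1\times_{s,Y_0,q}P\to Y_1\). On that space, acting by \(\gamma\) sends \((x,g)\mapsto(x,\gamma g)\), which preserves \(\tilde\alpha=\mathrm{pr}_1^*\alpha\). Thus both pullbacks equal the same form, and injectivity of pullback along a surjective submersion gives \(s_Y^*\beta=t_Y^*\beta\).
\item \(f_0^*\beta=\alpha\). Use the section \(x\mapsto(x,u(f_0x))\) of \(\mathrm{pr}_1\); along it, \(q\) restricts to \(f_0\), so \(f_0^*\beta\) is the pullback of \(\tilde\alpha=\mathrm{pr}_1^*\alpha\) along the section, which is \(\alpha\).
\end{enumerate}
The descent lemma itself is standard: locally, \(q\) has sections, and the cocycle condition makes the locally descended forms agree on overlaps.

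\textbf{Morita equivalence and cohomology.} A Morita equivalence is a zigzag \(X\leftarrow Z\rightarrow Y\) of equivalences. Applying the above to each leg and composing one isomorphism with the inverse of the other gives the isomorphism of complexes. Since \(f_0^*\) is a cochain isomorphism, it induces \(H^\bullet_\bas(Y)\cong H^\bullet_\bas(X)\).

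I expect the main obstacle to be the surjectivity step, specifically the smoothness of \(\phi\) and the verification that the two pullbacks to \(P\times_{Y_0}P\) agree. Both rest directly on full faithfulness of \(f\).
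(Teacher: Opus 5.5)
Your argument is correct. It differs from the paper mainly in that the paper gives no proof of its own: it simply quotes Proposition 8.3 of Pflaum--Posthuma--Tang. You instead give a self-contained descent proof.

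The two halves of the definition of an equivalence do separate jobs in your argument:
\begin{enumerate}[(i)]
\item Essential surjectivity supplies the surjective submersion \(q=t_Y\circ\mathrm{pr}_2:P\to Y_0\). This gives injectivity of \(f_0^*\) and local sections along which to descend.
\item Full faithfulness supplies the map \(\phi:P\times_{Y_0}P\to X_1\), which turns the descent condition on \(P\times_{Y_0}P\) into exactly the basic condition \(s_X^*\alpha=t_X^*\alpha\).
\end{enumerate}
Your check that \(\beta\) is \(Y\)-basic, via \((\gamma,(x,g))\mapsto(x,\gamma g)\) on the base change of \(q\) along \(s_Y\), is right. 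So is your check that \(f_0^*\beta=\alpha\), via the unit section. Reading the loosely worded ``Morita equivalence'' clause as a zigzag of equivalences is the correct interpretation.

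What each approach buys: the citation is brief and places the result inside Pflaum--Posthuma--Tang's orbit-space framework. Your proof uses nothing beyond the submersion and fibre-product properties in the definition of an equivalence. So it visibly holds for arbitrary Lie groupoids, with no properness assumption. That is the generality the paper actually needs, since it invokes the theorem for \(X\simeq Z=Y\times_{BG}EG\) with \(X\) an arbitrary Lie groupoid.

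Two small points deserve a line each in a written version.
\begin{enumerate}[(i)]
\item Smoothness of \(\phi\), which you flagged as the main obstacle, follows at once from the universal property of the pullback square defining full faithfulness. Apply it to the smooth maps \((x,g,x',g')\mapsto(x,x')\) and \((x,g,x',g')\mapsto g'^{-1}g\), which agree over \(Y_0\times Y_0\).
\item In the descent lemma, agreement of the local descents \(\sigma^*\tilde\alpha\) on overlaps is not the whole argument. You must also check that \(q^*(\sigma^*\tilde\alpha)=\tilde\alpha\) on all of \(q^{-1}(U)\), not only along \(\sigma(U)\). This follows by pulling back the identity \(\mathrm{pr}_1^*\tilde\alpha=\mathrm{pr}_2^*\tilde\alpha\) along \(m\mapsto(m,\sigma(q(m)))\). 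This step is also why the criterion works even though the fibres of \(q\) may be disconnected.
\end{enumerate}
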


\subsection{Group action on Lie groupoids}\label{Subsec_gpaction}

As mentioned in the introduction, there are various versions of group action on Lie groupoids. No matter which version is used, a $G$-action on a Lie groupoid $X$, strict or weak, determines a quotient groupoid/stack $X\rtimes BG$. In the Kan-fibration model, this quotient is the total groupoid $Y$ of the fibration $Y\to BG$, whose fiber is $X$.

Let \(G\) be a Lie group. We regard \(BG\) as the one-object Lie groupoid
\[
BG=(G\rightrightarrows *),
\]
whose arrows are the elements of \(G\), with multiplication given by the group
law.

\begin{defn}[Kan fibration over \(BG\)]
A homomorphism of Lie groupoids
\[
p:Y\to BG
\]
is called a Kan fibration if the map
\[
(s_Y,p_1):Y_1\longrightarrow Y_0\times G,
\qquad
g\longmapsto (s_Y(g),p_1(g)),
\]
is a surjective submersion.
\end{defn}
Using inverses, for a Kan fibration it is equivalent to requiring that
\[
(t_Y,p_1):Y_1\longrightarrow Y_0\times G
\]
be a surjective submersion.

The fiber of \(p\) over the unique object of \(BG\) is the Lie subgroupoid
\[
X=(p_1^{-1}(e)\rightrightarrows X_0),\;\;\; \mbox{ where }\;\;\;
X_0=Y_0.
\]
It has the same object manifold \(Y_0\), and its arrows are precisely those
arrows of \(Y\) mapped to the identity element \(e\in G\). 
\begin{definition}
    A Kan fibration over \(BG\) is denoted as a diagram
\begin{equation}\label{E_KF}
X\hookrightarrow Y\xrightarrow{p} BG,
\end{equation}
we say $p$ is a (weak) $G$-action on $X$. We may formally denote $Y$ by $X\rtimes BG$.
\end{definition}
The weak \(G\)-action can be better interpreted in terms of bibundles. Given
the Kan fibration \eqref{E_KF},
each element \(a\in G\) determines the manifold
\[
P_a:=p_1^{-1}(a)\subseteq Y_1.
\]
This is naturally an \(X\)-\(X\) bibundle. Its left and right moment maps
are
\[
t_Y:P_a\to X_0,
\qquad
s_Y:P_a\to X_0.
\]
The left and right actions of \(X\) are given by multiplication in \(Y\):
\[
h\cdot g:=hg,
\qquad
g\cdot k:=gk,
\]
where
\[
g\in P_a,
\qquad
h,k\in (X)_1=p_1^{-1}(e)
\]
are composable.

Thus \(P_a\) should be interpreted as the action of \(a\) on the groupoid
\(X\). The multiplication in \(Y\) gives canonical bibundle isomorphisms
\[
P_a\otimes_{X}P_b\cong P_{ab}.
\]
Moreover,
\(
P_e=X_1
\)
is the identity bibundle of \(X\). Hence the Kan fibration encodes an action
of \(G\) on \(X\) by Morita self-equivalences, rather than by strict
automorphisms. This is why it is called a weak \(G\)-action.

\begin{defn}[\(G\)-equivariant homomorphism]
Let
\[
X\hookrightarrow Y\xrightarrow{p}BG,
\qquad
X'\hookrightarrow Y'\xrightarrow{p'}BG
\]
be Kan fibrations, regarded as weak \(G\)-actions.
A \(G\)-equivariant homomorphism
\[
f:X\longrightarrow X'
\]
is a homomorphism of Lie groupoids
$
F:Y\to Y'
$
over \(BG\), meaning that
$
p'\circ F=p,
$ and the restriction of $F$ on the fiber is $f$. This is illustrated by the diagram
\[
\begin{tikzcd}
X \arrow[r] \arrow[d,"f"']
& Y \arrow[r] \arrow[d,"F"]
& BG \arrow[d,equal] \\
 X' \arrow[r]
&  Y' \arrow[r]
& BG.
\end{tikzcd}
\]
If $f$ and $F$ are equivalence, we say two actions are equivalent.
\end{defn}

The group $G$ admits bi-actions of $G$, or equivalently, by $BG$.  Let 
$
EG=BG\ltimes G
$
be the quotient groupoid. We write $G_0$ for object space $G$. 
\[
EG=(G\times G_0\rrto G_0).
\]
Let
$
X\hookrightarrow Y\xrightarrow{p}BG
$
be a weak \(G\)-action, and let
\[
Z:=Y\times_{BG}EG.
\]
Then 
\[
Z_1=Y_1\times_G(G\times G_0)\cong Y_1\times G_0,\;\;\;
Z_0\cong Y_0\times G_0.
\]
The structure maps are 
\[
s_Z(\gamma,h)=(s_Y(\gamma),h),\;\;\;
t_Z(\gamma,h)=(t_Y(\gamma), p_1(\gamma^{-1})h),\]
\[
u_Z(x,h)=(u_Y(x),h),\;\;\;
i_Z(\gamma,h)=(i_Y(\gamma),p_1(\gamma^{-1})h ),
\]
and the multiplication is as follows. If
\[
\gamma:x\to y,
\qquad
\eta:y\to z,
\]
with
\[
p_1(\gamma)=a,
\qquad
p_1(\eta)=b,
\]
then
\[
(\gamma,h):(x,h)\to(y,a^{-1}h),
\]
and
\[
(\eta,a^{-1}h):(y,a^{-1}h)\to(z,b^{-1}a^{-1}h).
\]
Their product is
\[(\gamma,h)
(\eta,a^{-1}h)=(\gamma\eta,h).
\]

\(Z\) has a strict right \(G\)-action
\begin{equation}\label{E_strict_free}
(x,h)\cdot a=(x,ha),
\qquad
(\gamma,h)\cdot a=(\gamma,ha)
\end{equation}
which corresponds to the 
 Kan fibration
\begin{equation}\label{E_KF_strict_free}
Z\hookrightarrow \tilde Z:=Z\rtimes BG\xrightarrow{q}BG.
\end{equation}
Here 
\[
\tilde Z_0=Z_0=Y_0\times G_0,\;\;\;
\tilde Z_1=Z_1\times G
\cong Y_1\times G_0\times G.
\]
The structure maps are 
\[
\tilde s(\gamma,h,a)=(s_Y(\gamma),h),\;\;
\tilde t(\gamma, h,a)=(t_Y(\gamma), ha),\]
\[\tilde u(x,h)=(u_X(x), h,e),\;\; \tilde i(\gamma,h,a)=(i_X(\gamma),p_1^{-1}(\gamma)ha, a^{-1}).
\]
$q_1(\gamma,h,a)=a$.

\begin{theorem} The diagram 
\[
\begin{tikzcd}
X \arrow[r] \arrow[d,"f"']
& Y \arrow[r] \arrow[d,"F"]
& BG \arrow[d,equal] \\
 Z \arrow[r]
&  Z\rtimes BG \arrow[r]
& BG
\end{tikzcd}
\]
gives a  $G$-equivariant homomorphism between two Kan fibrations, where 
$
F: Y\to Z\rtimes BG $ is given by 
\[
F_0(x)=(x,e),\;\;\;
F_1(\gamma)=(\gamma, p_1(\gamma),e).
\]
Moreover, both $f$ and $F$ are equivalence. Hence, two actions are equivalent.
\end{theorem}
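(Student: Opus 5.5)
The plan is to check the statement in three layers: first that $F$ is a well-defined Lie groupoid homomorphism over $BG$ whose restriction to the fibers is $f$; then that $F$ is an equivalence; and finally that $f$ is an equivalence. Throughout, ``equivalence'' is taken to mean a homomorphism that is fully faithful and essentially surjective, with the essential-surjectivity map required to be a surjective submersion. The only input beyond direct computation is the Kan condition, namely that $(s_Y,p_1)$ and equivalently $(t_Y,p_1)$ are surjective submersions $Y_1\to Y_0\times G$.

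\emph{Step 1: $F$ is a homomorphism over $BG$.} I will verify $\tilde s\circ F_1=F_0\circ s_Y$, $\tilde t\circ F_1=F_0\circ t_Y$, compatibility with units, inverses and multiplication, and $q_1\circ F_1=p_1$. Here I expect some bookkeeping with the displayed formulas for $\tilde Z$. Writing $a=p_1(\gamma)$, the arrow $(\gamma,e)$ of $Z$ goes from $(s_Y\gamma,e)$ to $(t_Y\gamma,a^{-1})$, and the strict right action by $a$ carries $(t_Y\gamma,a^{-1})$ back to $(t_Y\gamma,e)$. The arrow of $\tilde Z=Z\rtimes BG$ that should represent $\gamma$ is therefore $(\gamma,e,a)$, i.e.\ the $Z$-arrow $(\gamma,e)$ followed by the action of $a$. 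I will fix the ordering of the slots in $F_1$ and the conventions for $\tilde s,\tilde t$ accordingly, so that $F_1(\gamma)$ records $\gamma$, base point $e$, and group element $p_1(\gamma)$. With that convention:
\begin{itemize}
\item $\tilde s F_1(\gamma)=(s_Y\gamma,e)$ and $\tilde t F_1(\gamma)=(t_Y\gamma,e)$;
\item $q_1F_1=p_1$;
\item multiplicativity reduces to the multiplicativity of $p_1$ together with the formula $(\gamma,h)(\eta,a^{-1}h)=(\gamma\eta,h)$ in $Z$.
\end{itemize}
On the fiber $p_1=e$, the map $F$ restricts to $f_0(x)=(x,e)$ and $f_1(\gamma)=(\gamma,e)$, which lands in $Z=q^{-1}(e)$.

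\emph{Step 2: $F$ is an equivalence.}
\begin{itemize}
\item \emph{Full faithfulness.} The arrows of $\tilde Z$ from $(x,e)$ to $(y,e)$ are the triples $(\gamma,e,a)$ with $s_Y\gamma=x$, $t_Y\gamma=y$ and $a=p_1(\gamma)$, because the target forces $a^{-1}\cdot a$-type cancellation to land at $e$. So the map $Y_1\to (Y_0\times Y_0)\times_{\tilde Z_0\times\tilde Z_0}\tilde Z_1$ is a diffeomorphism, with inverse the projection to the $\gamma$-slot.
\item \emph{Essential surjectivity.} Given $(x,h)\in\tilde Z_0$, the arrow $(u_Y(x),e,h)$ goes from $F_0(x)=(x,e)$ to $(x,h)$. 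This gives a global smooth section of $Y_0\times_{\tilde Z_0}\tilde Z_1\to\tilde Z_0$, $(x,\ldots)\mapsto$ target. Since the target map of $\tilde Z$ is itself a submersion (the group slot moves freely), this map is a surjective submersion.
\end{itemize}

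\emph{Step 3: $f$ is an equivalence, which I expect to be the main obstacle.}
\begin{itemize}
\item \emph{Full faithfulness.} The arrows of $Z$ from $(x,e)$ to $(y,e)$ are $(\gamma,e)$ with $p_1(\gamma)^{-1}=e$, i.e.\ exactly the arrows of $X$. So full faithfulness is again immediate.
\item \emph{Essential surjectivity.} We must show that $\Phi:\{(x,\zeta)\in X_0\times Z_1:\ s_Z\zeta=(x,e)\}\to Z_0$, $(x,\zeta)\mapsto t_Z\zeta$, is a surjective submersion. This domain is $\{\gamma\in Y_1\}$ (via $\zeta=(\gamma,e)$), and $\Phi(\gamma)=(t_Y\gamma,p_1(\gamma)^{-1})$. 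Thus $\Phi$ is the composite of $(t_Y,p_1):Y_1\to Y_0\times G$ with the diffeomorphism $(y,a)\mapsto(y,a^{-1})$. It is precisely the Kan condition in its target form, stated right after the definition of a Kan fibration, that makes this a surjective submersion.
\end{itemize}
This is the only place where the Kan hypothesis enters, and it is the step I will write out carefully.

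Finally, since $F$ is an equivalence over $BG$ restricting to the equivalence $f$, the definition of equivalent actions gives that the two weak $G$-actions are equivalent.
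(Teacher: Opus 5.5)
Your proposal is correct and is the direct verification the paper leaves as ``straightforward''. You are right that the displayed formulas need repair: $F_1(\gamma)=(\gamma,e,p_1(\gamma))$, and $\tilde t(\gamma,h,a)=(t_Y(\gamma),p_1(\gamma)^{-1}ha)$, which is the convention consistent with the paper's $\tilde i$. You are also right that the target form of the Kan condition is exactly what makes $f$ essentially surjective.

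The one point to tighten is the submersion claim in Step 2: ``$\tilde t$ is a submersion'' alone does not suffice. The correct reason is that on $Y_0\times_{\tilde Z_0}\tilde Z_1\cong Y_1\times G$ the map is $(\gamma,a)\mapsto(t_Y\gamma,p_1(\gamma)^{-1}a)$. After the diffeomorphism $(\gamma,a)\mapsto(\gamma,p_1(\gamma)^{-1}a)$ of $Y_1\times G$, this becomes $t_Y\times\mathrm{id}_G$, a surjective submersion.
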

The proof is straightforward. 

Note that the $G$ action on $Z$ is strict and free on each level (cf. \eqref{E_strict_free}).
This theorem says that we can replace $X$ by an equivalence $Z$ such that the equivalent $G$ action on $Z$ is strict and free on both $Z_0$ and $Z_1$. In \cite{BehrendXu2011,BursztynNosedaZhu2020}, such   $Z$ is also called  a $G$-principal bundle over 
\[
Z/G:=(Z_1/G\rrto Z_0/G).
\]
It is clear that
\begin{prop}\label{P_ZtoY}
    $Z/G\cong Y$.
\end{prop}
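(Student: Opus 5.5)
We construct an explicit isomorphism of Lie groupoids $\Phi:Z/G\to Y$ by normalising the $G_0$-coordinate to the identity. On objects, the right action $(x,h)\cdot a=(x,ha)$ on $Z_0=Y_0\times G_0$ is free, and each orbit meets $Y_0\times\{e\}$ exactly once, via $a=h^{-1}$. Hence
\[
\Phi_0:Z_0/G\to Y_0,\qquad [(x,h)]\mapsto x,
\]
is a well-defined bijection. It is a diffeomorphism because the global section $x\mapsto[(x,e)]$ trivialises the principal bundle $Z_0\to Z_0/G$, so that $Z_0\cong Y_0\times G$ equivariantly. The same argument works on arrows, since $Z_1\cong Y_1\times G_0$ with action $(\gamma,h)\cdot a=(\gamma,ha)$. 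This gives
\[
\Phi_1:Z_1/G\to Y_1,\qquad [(\gamma,h)]\mapsto\gamma,
\]
again a diffeomorphism, with inverse $\gamma\mapsto[(\gamma,e)]$. In particular $Z_1/G$ and $Z_0/G$ are manifolds, so $Z/G$ makes sense as a Lie groupoid.

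Next we check that the structure maps of $Z$ are $G$-equivariant, so that they descend to $Z/G$. For the source, $s_Z(\gamma,ha)=(s_Y\gamma,ha)$, which is immediate. For the target, $t_Z(\gamma,ha)=(t_Y\gamma,p_1(\gamma^{-1})ha)$, which equals $t_Z(\gamma,h)\cdot a$ since the action is on the right while $p_1(\gamma^{-1})$ multiplies on the left. The unit is equivariant for the same reason. For the inverse, $i_Z(\gamma,h)=(\gamma^{-1},p_1(\gamma^{-1})h)$ has the same left-multiplication form, so it commutes with right multiplication by $a$. Multiplication is equivariant because $(\gamma,ha)(\eta,a'^{-1}ha)=(\gamma\eta,ha)$, where $a'=p_1(\gamma)$.

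Under $\Phi$ the descended maps become $s_Y$, $t_Y$, $u_Y$, $i_Y$ and $m_Y$: one simply forgets the $G_0$-coordinate. For instance, $\Phi_0(t_Z[(\gamma,h)])=t_Y(\gamma)$, and the product of the classes of $(\gamma,e)$ and $(\eta,p_1(\gamma)^{-1})$ is the class of $(\gamma\eta,e)$, which maps to $\gamma\eta$. Hence $\Phi$ is a groupoid homomorphism, and being bijective with smooth inverse on both levels, it is an isomorphism.

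The main obstacle is bookkeeping. The composition convention written for $Z$ has $(\gamma,h)(\eta,a^{-1}h)=(\gamma\eta,h)$, with $\gamma:x\to y$ and $\eta:y\to z$, so we must fix a consistent order of composition before verifying that the descended multiplication matches $m_Y$. It would be prudent to verify this through the mutually inverse descriptions $\gamma\mapsto[(\gamma,e)]$ and $[(\gamma,h)]\mapsto\gamma$ rather than through orbit representatives. A further optional check is that $\Phi$ lies over $BG$, where $Z/G\to BG$ is induced by $q$ and sends $[(\gamma,h)]$ to $p_1(\gamma)$. This is consistent with $F_1(\gamma)=(\gamma,p_1(\gamma),e)$ in the preceding theorem.
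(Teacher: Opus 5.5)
Your proposal is correct and is essentially the argument the paper has in mind. The paper gives no proof beyond ``It is clear,'' and your identification $[(x,h)]\mapsto x$, $[(\gamma,h)]\mapsto\gamma$ (with inverses $x\mapsto[(x,e)]$, $\gamma\mapsto[(\gamma,e)]$), together with your check that the structure maps of $Z$ are $G$-equivariant and descend to $s_Y,t_Y,u_Y,i_Y,m_Y$, is exactly the verification left implicit there. The one point worth writing out is why the descended product is well defined: composable representatives of a composable pair of orbits exist and are unique up to a simultaneous right translation, which uses freeness of the action on $Z_0$ in addition to the equivariance of $m_Z$ that you checked.
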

The groupoid $Z$ and the Kan fibration 
\eqref{E_KF_strict_free}
play important roles in this section and the next section.
\subsection{$G$ equivariant basic cohomology}\label{Subsec_equiv_bas}

Let \(Z\) be the Lie groupoid  carrying a strict  \(G\)-action.

\begin{prop}
\(\Omega^\bullet_{\mathrm{bas}}(Z)\) is naturally a \(G^\ast\)-algebra.
\end{prop}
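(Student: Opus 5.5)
To prove the proposition, I will produce on $\Omega^\bullet_{\bas}(Z)$ the three operations that make up a $G^\ast$-algebra: a $G$-action $\rho$, contractions $\iota_\xi$ and Lie derivatives $\mathcal L_\xi$ for $\xi\in\mathfrak g=\Lie(G)$, together with $\dd$. The plan is to take these from the ordinary de Rham complex $\Omega^\bullet(Z_0)$, which is already a $G^\ast$-algebra because $G$ acts smoothly on the manifold $Z_0$. The work is to check that $\Omega^\bullet_{\bas}(Z)$ is a sub-$G^\ast$-algebra, i.e.\ stable under every operation. Once that holds, the Cartan relations follow automatically, since they already hold in $\Omega^\bullet(Z_0)$:
\[
[\iota_\xi,\iota_\eta]=0,\quad
[\mathcal L_\xi,\iota_\eta]=\iota_{[\xi,\eta]},\quad
\mathcal L_\xi=\dd\iota_\xi+\iota_\xi \dd,\quad
\rho(a)\iota_\xi\rho(a)^{-1}=\iota_{\mathrm{Ad}_a\xi},
\]
together with the derivative of $\rho$ being $\mathcal L$.

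\emph{Group action.} For $a\in G$, let $R_a$ denote the strict right action \eqref{E_strict_free} on $Z_0$ and $Z_1$, and put $\rho(a)\alpha=R_{a^{-1}}^\ast\alpha$. The action is strict, so $s_Z\circ R_a=R_a\circ s_Z$ and $t_Z\circ R_a=R_a\circ t_Z$. Hence $s_Z^\ast\alpha=t_Z^\ast\alpha$ gives $s_Z^\ast R_a^\ast\alpha=R_a^\ast s_Z^\ast\alpha=R_a^\ast t_Z^\ast\alpha=t_Z^\ast R_a^\ast\alpha$, so basic forms are preserved. $\dd$ obviously preserves basic forms. Differentiating the action, $\mathcal L_\xi$ also preserves basic forms, because $\mathcal L_{\xi_{Z_1}}$ is intertwined with $\mathcal L_{\xi_{Z_0}}$ by $s_Z^\ast$ and by $t_Z^\ast$.

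\emph{Contractions.} Write $\xi_{Z_0}$ and $\xi_{Z_1}$ for the fundamental vector fields on $Z_0$ and $Z_1$. Equivariance of $s_Z$ and $t_Z$ says $\xi_{Z_1}$ is $s_Z$-related and $t_Z$-related to $\xi_{Z_0}$. Then for each $v\in T_\gamma Z_1$,
\[
s_Z^\ast(\iota_{\xi_{Z_0}}\alpha)=\iota_{\xi_{Z_1}}s_Z^\ast\alpha=\iota_{\xi_{Z_1}}t_Z^\ast\alpha=t_Z^\ast(\iota_{\xi_{Z_0}}\alpha).
\]
This is the pointwise identity $(s_Z^\ast\beta)_\gamma(\xi_{Z_1},\cdot)=\beta_{s(\gamma)}(\dd s\,\xi_{Z_1},\dd s\,\cdot)$. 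So $\iota_\xi$ preserves $\Omega^\bullet_{\bas}(Z)$. The step I would verify carefully, and which I expect to be the only real subtlety, is that these relatedness statements hold on the nose. In the explicit model, $R_a$ acts on the $G_0$-factor of $Z_1=Y_1\times G_0$ by right multiplication. The target is $t_Z(\gamma,h)=(t_Y(\gamma),p_1(\gamma^{-1})h)$, and left multiplication by $p_1(\gamma^{-1})$ commutes with right multiplication by $a$. So $t_Z$ is genuinely equivariant, and the fundamental field on $Z_1$ is the right-invariant-generated field on the $G_0$ factor.

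Since every operation comes from restricting the $G^\ast$-structure of $\Omega^\bullet(Z_0)$, all identities hold, and naturality (for strictly equivariant homomorphisms $F$) follows from $F_0^\ast$ commuting with pullbacks by $R_a$ and with $\iota_\xi$.
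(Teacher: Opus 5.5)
Your proposal is correct and follows the paper's argument: restrict the standard $G^\ast$-structure of $\Omega^\bullet(Z_0)$, built from the fundamental fields $\xi_{Z_0}$, to the basic subcomplex, with stability coming from the $G$-equivariance of $s_Z$ and $t_Z$. You supply the relatedness check that the paper compresses into ``these operations preserve basic forms''.

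There is one convention slip, which does not affect the stability argument. For a right action the representation on forms should be $\rho(a)=R_a^\ast$. With your choice $\rho(a)=R_{a^{-1}}^\ast$, the map $a\mapsto R_{a^{-1}}^\ast$ is an anti-homomorphism, conjugates $\iota_\xi$ to $\iota_{\mathrm{Ad}_{a^{-1}}\xi}$, and differentiates to $-\mathcal L_\xi$, so it does not give the relations you list. Relatedly, the field generating right multiplication on the $G_0$-factor is left-invariant, not right-invariant; left-invariance is exactly why the left multiplication by $p_1(\gamma^{-1})$ in $t_Z$ preserves it.
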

\begin{proof}
For \(\xi\in\mathfrak g=\Lie(G)\), let \(\xi_Z\) be the fundamental vector field
on \(Z_0\). Define
\[
\iota_\xi:=\iota_{\xi_Z},
\qquad
\mathcal L_\xi:=\mathcal L_{\xi_Z}
\]
on \(\Omega^\bullet_{\mathrm{bas}}(Z)\). These operations preserve basic forms
and satisfy the Cartan relations
\[
[\dd,\iota_\xi]=\mathcal L_\xi,
\qquad
[\dd,\mathcal L_\xi]=0,
\qquad
[\mathcal L_\xi,\iota_\eta]=\iota_{[\xi,\eta]},
\qquad
[\mathcal L_\xi,\mathcal L_\eta]=\mathcal L_{[\xi,\eta]}.
\]
Thus \(\Omega^\bullet_{\mathrm{bas}}(Z)\) is a differential graded algebra
equipped with the standard \(G^\ast\)-algebra structure.
\end{proof}

Let
\[
A^\bullet:=\Omega^\bullet_{\bas}(Z).
\]
Since \(A^\bullet\) is a \(G^\ast\)-algebra, its equivariant cohomology can be
defined by the Weil model or, equivalently, by the Cartan model. This is explained in \cite{GuilleminSternberg1999}.

The Weil algebra of \(\mathfrak g=\Lie(G)\) is
\[
W(\mathfrak g)=S(\mathfrak g^\vee)\otimes \wedge(\mathfrak g^\vee),
\]
where elements of \(\wedge^1(\mathfrak g^\vee)\) have degree \(1\), and
elements of \(S^1(\mathfrak g^\vee)\) have degree \(2\). Here $\mathfrak g^\vee$ is the dual space of $\mathfrak g$

The \emph{Weil model} of the equivariant cohomology of the basic complex is
\[
\left(W(\mathfrak g)\otimes \Omega^\bullet_{\bas}(Z)\right)_{\bas},
\]
where ``basic'' means horizontal and invariant for the diagonal
\(G^\ast\)-structure. Its differential is
\[
d_W\otimes 1+1\otimes d.
\]
Thus
\[
H_G^\bullet\bigl(\Omega^\bullet_{\bas}(Z)\bigr)
:=
H^\bullet\left(
\left(W(\mathfrak g)\otimes \Omega^\bullet_{\bas}(Z)\right)_{\bas},
d_W\otimes 1+1\otimes d
\right).
\]

Equivalently, the \emph{Cartan model} is
\[
\left(S(\mathfrak g^\vee)\otimes \Omega^\bullet_{\bas}(Z)\right)^G
\]
with differential
\[
d_C
=
1\otimes d-\sum_a u^a\otimes \iota_{e_a},
\]
where \(\{e_a\}\) is a basis of \(\mathfrak g\), and \(\{u^a\}\) is the
corresponding degree \(2\) basis of \(S^1(\mathfrak g^\vee)\). Hence
\[
H_G^\bullet\bigl(\Omega^\bullet_{\bas}(Z)\bigr)
\cong
H^\bullet\left(
\left(S(\mathfrak g^\vee)\otimes \Omega^\bullet_{\bas}(Z)\right)^G,
d_C
\right).
\]

\begin{defn}[\(G\)-equivariant basic cohomology]
Let
\[
X\hookrightarrow Y\xrightarrow{p}BG
\]
be a weak \(G\)-action, and define
\[
Z:=Y\times_{BG}EG.
\]
Then \(Z\) is Morita equivalent to \(X\), and \(Z\) carries a strict free
\(G\)-action. Hence
\[
\Omega^\bullet_{\bas}(X)\cong \Omega^\bullet_{\bas}(Z)
\]
is a \(G^\ast\)-algebra.

We define the \(G\)-equivariant basic cohomology of \(X\) by
\[
H^\bullet_{G,\bas}(X)
:=
H^\bullet_G\bigl(\Omega^\bullet_{\bas}(X)\bigr).
\]
Equivalently, in the Weil model,
\[
H^\bullet_{G,\bas}(X)
:=
H^\bullet\left(
\left(W(\mathfrak g)\otimes \Omega^\bullet_{\bas}(X)\right)_{\bas},
d_W\otimes 1+1\otimes d
\right).
\]

In the Cartan model, one writes
\[
H^\bullet_{G,\bas}(X)
\cong
H^\bullet\left(
\left(S(\mathfrak g^\vee)\otimes \Omega^\bullet_{\bas}(Z)\right)^G,
d_C
\right),
\]
where
\[
d_C
=
1\otimes d-\sum_a u^a\otimes\iota_{e_a}.
\]
Here \(\{e_a\}\) is a basis of \(\mathfrak g\), and \(\{u^a\}\) is the
corresponding degree \(2\) basis of \(S^1(\mathfrak g^\vee)\).
\end{defn}

\subsection{Equivariant basic cohomology for locally free actions}\label{Subsec_free_bas}

It is well known that if $G$ acts freely on a smooth manifold $M$, we have 
\[
H^\ast_G(M)\cong H^\ast(M/G).
\]
We show that this is still true for $H^\ast_{G,\bas}$. 

There is a issue of what we mean a free $G$ action on a Lie groupoid.
Given a Kan fibration \eqref{E_KF}, it induces a $G$ action on the coarse space $|X|$.  

\begin{defn}[Locally free \(G\)-action] 
We say the $G$ action given by the Kan fibration 
\eqref{E_KF} is (locally) free if  the induced
action on \(|X|\) is (locally) free. Equivalently, for every point
\[
[x]\in |X|,
\]
the stabilizer subgroup
\[
G_{[x]}:=\{g\in G\mid g\cdot [x]=[x]\}
\]
is (discrete, or) trivial.
\end{defn}

For \(x\in X_0\), let
\[
{\Gamma}_x:=\{\gamma\in X_1\mid s_X(\gamma)=t_X(\gamma)=x\}
\]
be the isotropy group of \(X\) at \(x\), and
\[
\widehat{\Gamma}_x:=\{\gamma\in Y_1\mid s_Y(\gamma)=t_Y(\gamma)=x\}
\]
be the isotropy group of \(Y\) at \(x\). Then there is an exact sequence of groups
\begin{equation}\label{E_isotropy}
1\to \Gamma_x\to \widehat{\Gamma}_x\xrightarrow{p_1} G.
\end{equation}
One has
\[
G_{[x]}\cong p_1(\widehat{\Gamma_x}).
\]
Hence the weak \(G\)-action is locally free exactly when
\(
p_1(
\widehat{\Gamma_x})\subseteq G
\)
is discrete for every \(x\in X_0\).  
\begin{prop}\label{P_Z_C}
Let $Z$
be a proper Lie groupoid with compact orbit space \(|Z|\). Let \(G\) be a
compact Lie group acting strictly on \(Z\) and freely on both $Z_0$ and $Z_1$. Assume that the induced \(G\)-action
on \(|Z|\) is locally free. Then
\[
\Omega^\bullet_{\bas}(Z)
\]
satisfies condition \((C)\), i.e, there exists a connection form
\[
\theta\in \Omega^1_{\bas}(Z)\otimes\mathfrak g
\]
such that, for every \(\xi\in\mathfrak g\),
\[
\iota_{\xi_Z}\theta=\xi,
\]
and, for the right \(G\)-action,
\[
R_a^*\theta=\operatorname{Ad}_{a^{-1}}\theta.
\]
\end{prop}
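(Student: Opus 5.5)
The plan is to build $\theta$ locally near each $G$-orbit in $|Z|$ as a pull-back of a Maurer--Cartan form. These local forms are then glued with a basic, $G$-invariant partition of unity, and finally averaged over $G$ to force equivariance.

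\emph{Two preliminary observations.} First, the conditions on $\theta$ are affine. A convex combination of basic $\mathfrak g$-valued $1$-forms with $\iota_{\xi_Z}\theta=\xi$ again has this property, provided the coefficients are basic functions. Second, averaging preserves everything. Given a basic $\theta'$ with $\iota_{\xi_Z}\theta'=\xi$, set
\[
\theta:=\int_G \operatorname{Ad}_a R_a^*\theta'\,da .
\]
Since the action is strict, $R_a$ commutes with $s_Z,t_Z$, so $R_a^*$ preserves basic forms. Using $dR_a(\xi_Z)=(\operatorname{Ad}_{a^{-1}}\xi)_Z$, one gets $\iota_{\xi_Z}\operatorname{Ad}_aR_a^*\theta'=\xi$. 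The invariance of Haar measure then gives $R_b^*\theta=\operatorname{Ad}_{b^{-1}}\theta$. So it suffices to produce a basic $\theta'$ with $\iota_{\xi_Z}\theta'=\xi$.

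\emph{Local step.} Fix $[x]\in|Z|$. By local freeness, its stabilizer $H=G_{[x]}$ is finite. I would use the linearization/slice theorem for proper Lie groupoids (as in Pflaum--Posthuma--Tang), combined with the slice theorem for the compact group $G$ acting on the stratified space $|Z|$. The goal is a $Z$-saturated, $G$-invariant open neighbourhood $U\subseteq Z_0$ of the orbit of $x$, together with a smooth map
\[
\varphi:U\to H\backslash G
\]
that is constant along arrows of $Z$ (i.e.\ $\varphi\circ s_Z=\varphi\circ t_Z$) and right $G$-equivariant. This is the tube $G\times_H S\to H\backslash G$ on $|Z|$, pulled back to $Z_0$. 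The left-invariant Maurer--Cartan form $\theta_G=g^{-1}dg$ is invariant under left translation by $H$, so it descends to $H\backslash G$. There it still satisfies $\iota_{\xi}\theta_G=\xi$ for the fundamental fields of the right action, and $R_a^*\theta_G=\operatorname{Ad}_{a^{-1}}\theta_G$. Hence $\theta_U:=\varphi^*\theta_G$ is basic on $Z|_U$ (because $\varphi$ is $Z$-invariant) and satisfies $\iota_{\xi_Z}\theta_U=\xi$ (because $\varphi$ is equivariant).

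\emph{Gluing.} Compactness of $|Z|$ gives finitely many such $U_i$. Properness of $Z$ gives a partition of unity subordinate to $\{U_i\}$ by basic functions, i.e.\ functions on $Z_0$ pulled back from $|Z|$. Averaging these functions over the compact group $G$, I may take them $G$-invariant. Then $\theta'=\sum_i\rho_i\theta_{U_i}$ has the required properties, and the averaging above yields $\theta$.

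\emph{Expected obstacle.} The main difficulty is the local step: producing a \emph{smooth} $Z$-invariant equivariant map $\varphi$. The space $|Z|$ is only a stratified/diffeological space, so the slice for the $G$-action must be built upstairs. I would first try the following. Use the groupoid linearization near the orbit of $x$, so that $Z|_U$ is Morita equivalent to an action groupoid $\widehat\Gamma\ltimes V$ with $\widehat\Gamma$ compact, via the isotropy sequence \eqref{E_isotropy}. Then apply the ordinary slice theorem there. Local freeness makes the composite $\widehat\Gamma\to G$ have finite image $H$, which produces $\varphi$. Morita invariance of basic forms (the theorem of Pflaum--Posthuma--Tang quoted above) then transports $\theta_U$ back to $Z|_U$.
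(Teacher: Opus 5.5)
Your proof is correct in outline and has the same global shape as the paper's. Both use the slice theorem for the proper groupoid $Q=Z\rtimes BG$ to get finitely many $Z$-saturated, $G$-invariant neighbourhoods, build a local connection form on each, and glue with a $Q$-invariant (hence $Z$-basic and $G$-invariant) partition of unity. The difference is the local form.

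\begin{itemize}
\item The paper splits the normal bundle of the $Z$-orbits on each slice as $\nu_Z=\overline{\mathfrak g_Z}\oplus H_i$. It defines $\theta_i$ pointwise by $\theta_i(\xi_Z)=\xi$, $\theta_i|_{\rho_Z(A_Z)}=0$ and $\theta_i|_{H_i}=0$. It then asserts that $\theta_i$ is basic and can be taken equivariant because the slice is $Q$-invariant.
\item You set $\theta_U=\varphi^*\theta_G$ for a $Z$-invariant, $G$-equivariant $\varphi:U\to H\backslash G$. Then basicness, $\iota_{\xi_Z}\theta_U=\xi$ and $R_a^*\theta_U=\operatorname{Ad}_{a^{-1}}\theta_U$ follow formally from the properties of $\varphi$. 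So you never have to check invariance along the orbits, which a form built from a pointwise splitting has only if the splitting is chosen $Z$-invariantly.
\end{itemize}
Your final averaging over $G$ is harmless but redundant, since $\varphi$ is already equivariant and the partition of unity is $G$-invariant. The price of your route is that all the real work sits in constructing $\varphi$.

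Your sketch of that step needs three adjustments.

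First, it is $Q|_U$, not $Z|_U$, that linearizes to $\widehat\Gamma_x\ltimes V$. The isotropy of $Z$ at $x$ is $\Gamma_x$, not $\widehat\Gamma_x$, and $V$ is normal to the $Q$-orbit. A $Z$-invariant, $G$-equivariant $\varphi$ is exactly a $Q$-equivariant map to $H\backslash G$, with $Q$ acting through $q:Q\to BG$.

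Second, transport $\varphi$ along the equivalence, not $\theta_U$. A connection form is not $Q$-basic, so Morita invariance of basic forms says nothing about it.

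Third, on a slice $S$ with $Q|_S\cong\widehat\Gamma_x\ltimes S$, the homomorphism $q$ becomes a cocycle $c(k,v)$. It reduces to $\chi=q|_{\widehat\Gamma_x}$ at $v=0$, but need not take values in $H$ for $v\neq 0$, so the constant map $v\mapsto H$ need not be equivariant. Instead, note that $\widehat\Gamma_x$ acts on $S\times H\backslash G$ through $c$ and fixes $(0,H)$. Linearize at this fixed point and take a $\widehat\Gamma_x$-invariant complement to the vertical tangent space. This gives an equivariant section over a smaller invariant slice $S'$. Extending it by $Q$-equivariance to the open saturation $Q\cdot S'$ produces the required $\varphi$.
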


\begin{proof}
Since \(Z\) is proper and \(G\) is compact, \(Q=Z\rtimes BG\) is a proper Lie groupoid.
By the slice theorem for proper Lie groupoids, \(Q\) admits slices at every
point of \(Z_0\). Since $|Q|$ is compact, therefore one may choose finitely many slices and a
\(Q\)-invariant partition of unity subordinate to the corresponding slice
neighborhoods.

Let
\[
A_Z=\Lie(Z),
\qquad
\rho_Z:A_Z\to TZ_0,
\]
and write
\[
\nu_Z:=TZ_0/\rho_Z(A_Z)
\]
for the normal directions to the \(Z\)-orbits. Since the \(G\)-action on
\(|Z|\) is locally free, the map
\[
\mathfrak g\longrightarrow \nu_{Z,z},
\qquad
\xi\longmapsto \overline{\xi_Z(z)}
\]
is injective for every \(z\in Z_0\).

On each slice chart, choose a splitting of the normal directions
\[
\nu_Z
=
\overline{\mathfrak g_Z}\oplus H_i,
\]
where
\[
\overline{\mathfrak g_Z}
=
\{\overline{\xi_Z}\mid \xi\in\mathfrak g\}.
\]
This splitting defines a local connection form
\[
\theta_i\in \Omega^1_{\bas}(Z)\otimes\mathfrak g
\]
by requiring
\[
\theta_i(\xi_Z)=\xi,
\qquad
\theta_i|_{\rho_Z(A_Z)}=0,
\qquad
\theta_i|_{H_i}=0.
\]
Equivalently,
\[
\iota_{\xi_Z}\theta_i=\xi.
\]
The slice construction is \(Q\)-invariant, hence the local forms may be chosen
\(G\)-equivariant:
\[
R_a^*\theta_i=\operatorname{Ad}_{a^{-1}}\theta_i.
\]

Let \(\{\varphi_i\}\) be a \(Q\)-invariant partition of unity subordinate to
the slice cover. Define
\[
\theta:=\sum_i \varphi_i\theta_i.
\]
Since the functions \(\varphi_i\) are \(Q\)-invariant, hence \(Z\)-basic and
\(G\)-invariant, the form \(\theta\) is still basic:
\[
\theta\in\Omega^1_{\bas}(Z)\otimes\mathfrak g.
\]
Moreover,
\[
\iota_{\xi_Z}\theta
=
\sum_i \varphi_i\,\iota_{\xi_Z}\theta_i
=
\sum_i \varphi_i\,\xi
=
\xi,
\]
and
\[
R_a^*\theta=\operatorname{Ad}_{a^{-1}}\theta.
\]
Thus \(\theta\) is a connection form in the \(G^\ast\)-algebra
\(
\Omega^\bullet_{\bas}(Z).
\)
Therefore \(\Omega^\bullet_{\bas}(Z)\) satisfies condition \((C)\).
\end{proof}

\begin{theorem}[\cite{GuilleminSternberg1999}]\label{T_C}
Let \(A\) be a \(G^\ast\)-algebra. Define its \(G\)-basic subcomplex by
\[
A_{\bas}
=
\{\alpha\in A\mid \iota_\xi\alpha=0,\ \mathcal L_\xi\alpha=0
\text{ for all }\xi\in\mathfrak g\}.
\]
If \(A\) satisfies condition \((C)\), namely if \(A\) admits a connection form,
then the natural inclusion of \(G\)-basic elements into the Weil model induces
an isomorphism
\[
H^\bullet(A_{\bas})
\cong
H_G^\bullet(A).
\]
\end{theorem}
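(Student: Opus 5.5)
The plan is the standard Cartan--Weil argument of \cite{GuilleminSternberg1999}, phrased for an arbitrary \(G^\ast\)-algebra \(A\) with connection form \(\theta=\sum_a\theta^a e_a\), \(\theta^a\in A^1\).

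\textbf{Step 1: Weil-algebra maps.} Write \(W(\mathfrak g)=S(\mathfrak g^\vee)\otimes\wedge(\mathfrak g^\vee)\) with generators \(\vartheta^a\) of degree \(1\) and \(u^a\) of degree \(2\). Define the curvature components
\[
\mu^a:=\dd\theta^a+\tfrac12 c^a_{bc}\theta^b\theta^c .
\]
By the universal property of the Weil algebra, the assignment \(\vartheta^a\mapsto\theta^a\), \(u^a\mapsto\mu^a\) extends uniquely to a morphism of \(G^\ast\)-algebras \(c_\theta:W(\mathfrak g)\to A\). The condition \(\iota_{\xi}\theta=\xi\) makes \(c_\theta\) commute with contractions. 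Equivariance of \(\theta\) (infinitesimally \(\mathcal L_\xi\theta=-\mathrm{ad}_\xi\theta\)) makes it commute with Lie derivatives.

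\textbf{Step 2: projection onto basic elements.} Consider the map \(W(\mathfrak g)\otimes A\to A\), \(w\otimes\alpha\mapsto c_\theta(w)\alpha\). It is a morphism of \(G^\ast\)-algebras, so it sends \((W(\mathfrak g)\otimes A)_\bas\) into \(A_\bas\). Composing with the inclusion \(j:A_\bas\to (W(\mathfrak g)\otimes A)_\bas\), \(\alpha\mapsto 1\otimes\alpha\), gives the identity on \(A_\bas\), since \(c_\theta(1)=1\). Hence \(j\) is injective on cohomology, and it remains to show that \(j\circ\pi\) is chain homotopic to the identity on the Weil model.

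\textbf{Step 3: the homotopy.} I expect this to be the main obstacle. I would use the "Mathai--Quillen" or horizontal projection description. Every element of \(A\), and of \(W\otimes A\), decomposes uniquely via the connection into horizontal parts times monomials in \(\theta^a\), respectively in \(\vartheta^a\). This yields the isomorphism \((W\otimes A)_\bas\cong (S(\mathfrak g^\vee)\otimes A_{hor})^G\), and similarly \(A_\bas\cong (A_{hor})^G\).

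The cleanest route I would try first is the following. Consider the two \(G^\ast\)-maps \(W\to W\otimes A\) given by \(w\mapsto w\otimes 1\) and \(w\mapsto 1\otimes c_\theta(w)\). By acyclicity of \(W(\mathfrak g)\), meaning \(W\otimes W\) has connections and the standard homotopy, any two \(G^\ast\)-morphisms \(W\to B\) into a \(G^\ast\)-algebra are \(G^\ast\)-homotopic. Explicitly, interpolate the connections \(t(\vartheta\otimes1)+(1-t)(1\otimes\theta)\) and integrate over \(t\in[0,1]\). This homotopy commutes with \(\iota_\xi\) and \(\mathcal L_\xi\), and tensoring with \(A\) (multiplying by the second factor) yields a chain homotopy on \(W\otimes A\) preserving basic elements. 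It connects the identity with \(j\circ\pi\).

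Alternatively, one can filter by polynomial degree in \(S(\mathfrak g^\vee)\) and compare spectral sequences. The \(E_1\) page of \((W\otimes A)_\bas\) reduces, via the decomposition above, to \(H(\wedge\mathfrak g^\vee\) with its Koszul differential\()\otimes A_\bas=A_\bas\).

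\textbf{Step 4: conclusion.} Since \(\pi\circ j=\mathrm{id}\) and \(j\circ\pi\simeq\mathrm{id}\), the map \(j\) induces the isomorphism \(H^\bullet(A_\bas)\cong H^\bullet_G(A)\).

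For \(G\) noncompact, invariance must be taken under \(\mathfrak g\) together with \(\pi_0(G)\). The homotopy in Step 3 is canonical, hence equivariant, so this needs no extra work.
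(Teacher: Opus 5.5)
Your proof is correct, but it does not follow the route behind the paper's citation. The paper gives no argument of its own and simply quotes Guillemin--Sternberg. There the statement is the case \(E=W(\mathfrak g)\) of Cartan's theorem: \(H((A\otimes E)_{\bas})\cong H(A_{\bas})\) for \(A\) of type (C) and any acyclic \(W^\ast\)-module \(E\).

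You instead prove directly that \(j\colon A_{\bas}\to (W(\mathfrak g)\otimes A)_{\bas}\) is a homotopy equivalence:
\begin{enumerate}
\item The Chern--Weil map gives the retraction \(\pi\) with \(\pi\circ j=\mathrm{id}\).
\item The path of connections \(\theta_t=t(\vartheta\otimes 1)+(1-t)(1\otimes\theta)\) lives on \((W(\mathfrak g)\otimes A)\otimes\Omega^\bullet[0,1]\), with \(\mathfrak g\) acting trivially on the second factor. Integrating over \([0,1]\) gives an odd map \(h\) with \(\dd h+h\dd=f_0-f_1\), \(\iota_\xi h+h\iota_\xi=0\) and \(\mathcal L_\xi h=h\mathcal L_\xi\). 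Here \(f_0(w)=w\otimes 1\) and \(f_1(w)=1\otimes c_\theta(w)\).
\item Then \(H(w\otimes\alpha):=h(w)(1\otimes\alpha)\) satisfies \(\dd H+H\dd=\mathrm{id}-j\circ\pi\). It graded-commutes with \(\iota_\xi\) and \(\mathcal L_\xi\), because the terms involving \(\dd\alpha\) and \(\iota_\xi\alpha\) cancel since \(h\) is odd. So \(H\) restricts to the basic subcomplexes.
\end{enumerate}
I checked these signs and the argument closes. Your route gives an explicit homotopy inverse with no spectral sequence and no averaging, so it holds verbatim for noncompact \(G\). The Guillemin--Sternberg formulation instead gives independence of the auxiliary acyclic module \(E\).

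There are a few points to fix in the write-up.
\begin{itemize}
\item The fact you actually use is not ``acyclicity of \(W(\mathfrak g)\)''. It is that \(G^\ast\)-morphisms \(W(\mathfrak g)\to B\) correspond bijectively to connections on \(B\), and that connections form an affine space. Say this, and note explicitly that \(\theta_t\) is a connection on \((W(\mathfrak g)\otimes A)\otimes\Omega^\bullet[0,1]\).
\item Your spectral-sequence alternative does not work as stated, so drop it. Filtering by polynomial degree in \(S(\mathfrak g^\vee)\) does not produce an acyclic factor at \(E_1\): the degree-preserving part of \(d_W\) on \(\wedge\mathfrak g^\vee\) is the Chevalley--Eilenberg differential, not the Koszul one.
\item Graded commutativity of \(A\) is used essentially, both in the universal property of \(W(\mathfrak g)\) and in the multiplicativity of \(w\otimes\alpha\mapsto c_\theta(w)\alpha\). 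This is part of the Guillemin--Sternberg notion of \(G^\ast\)-algebra. It fails at cochain level for the BSS complex with its usual cup product, to which the paper later applies this theorem.
\end{itemize}
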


We apply this theorem to the case
\[
A^\bullet:=\Omega^\bullet_{\bas}(Z),
\qquad
Z=Y\times_{BG}EG.
\]
Then $\pi: Z\to Z/G=Y$ is
\[
\pi_0:Y_0\times G_0\to Y_0,
\qquad
\pi_0(x,h)=x,
\]
and
\[
\pi_1:Y_1\times G_0\to Y_1,
\qquad
\pi_1(\gamma,h)=\gamma.
\]
\begin{prop}\label{P_ZandY}
Pullback gives an identification
\[
\pi_0^*:\Omega^\bullet_{\bas}(Y)\xrightarrow{\cong}
\left(\Omega^\bullet_{\bas}(Z)\right)_{\bas}.
\]
\end{prop}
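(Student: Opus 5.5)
We prove the identification directly from the product structure \(Z_0=Y_0\times G_0\), \(Z_1=Y_1\times G_0\). There are three steps: show that \(\pi_0^*\) lands in the doubly basic forms, show it is injective, and show it is surjective.

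\textbf{Step 1: \(\pi_0^*\) lands in \(\left(\Omega^\bullet_{\bas}(Z)\right)_{\bas}\).} Let \(\alpha\in\Omega^\bullet_{\bas}(Y)\).

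First, \(\pi_0^*\alpha\) is \(Z\)-basic. Indeed, \(\pi\) is a groupoid homomorphism, so \(s_Y\circ\pi_1=\pi_0\circ s_Z\) and \(t_Y\circ\pi_1=\pi_0\circ t_Z\). Hence
\[
s_Z^*\pi_0^*\alpha=\pi_1^*s_Y^*\alpha=\pi_1^*t_Y^*\alpha=t_Z^*\pi_0^*\alpha .
\]
Note that we only use \(s_Y^*\alpha=t_Y^*\alpha\) restricted along \(\pi_1\). This is weaker than \(Y\)-basicness, but \(Y\)-basicness is needed for the converse.

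Second, \(\pi_0^*\alpha\) is \(G\)-basic. The fundamental vector fields \(\xi_Z\) are tangent to the \(G_0\)-factor, which lies in \(\ker d\pi_0\). So \(\iota_{\xi_Z}\pi_0^*\alpha=0\). Since \(\pi_0\) is \(G\)-invariant, \(\mathcal L_{\xi_Z}\pi_0^*\alpha=0\).

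\textbf{Step 2: injectivity.} This is immediate, because \(\pi_0\) is a surjective submersion; it has the section \(x\mapsto(x,e)\).

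\textbf{Step 3: surjectivity.} Let \(\beta\in\Omega^\bullet_{\bas}(Z)\) with \(\iota_{\xi_Z}\beta=0\) and \(\mathcal L_{\xi_Z}\beta=0\) for all \(\xi\).

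We first descend \(\beta\) to \(Y_0\). The action on \(Z_0\) is free and transitive on the fibres of \(\pi_0\), and those fibres are copies of \(G\). In the non-connected case we use the strict invariance \(R_a^*\beta=\beta\), which comes from the \(G\)-invariance built into the \(G^\ast\)-algebra basic condition. Then the standard argument for principal bundles gives a unique \(\alpha\in\Omega^\bullet(Y_0)\) with \(\pi_0^*\alpha=\beta\); concretely, \(\alpha=\sigma^*\beta\) for the section \(\sigma(x)=(x,e)\).

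It remains to show that \(\alpha\) is \(Y\)-basic. Define the section
\[
\tau:Y_1\to Z_1,\qquad \tau(\gamma)=(\gamma,e).
\]
Then \(s_Z\circ\tau=\sigma\circ s_Y\) and \(t_Z\circ\tau=R_{p_1(\gamma)^{-1}}\circ\sigma\circ t_Y\). Note that this second map is not simply \(\sigma\circ t_Y\); the \(G_0\)-coordinate shifts by \(p_1(\gamma)^{-1}\). Because \(\beta\) is a pullback from \(Y_0\), we have \(\beta=\pi_0^*\alpha\) and \(\pi_0\circ t_Z\circ\tau=t_Y\), so the shift does not matter. Therefore
\[
s_Y^*\alpha=\tau^*s_Z^*\beta=\tau^*t_Z^*\beta=t_Y^*\alpha .
\]

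\textbf{Main obstacle.} The main obstacle is Step 3. One must handle pullback along \(t_Z\), which twists the \(G_0\)-coordinate, and one must make sure that horizontality plus infinitesimal invariance suffices to descend \(\beta\) when \(G\) is disconnected. For the latter we rely on the global invariance being part of the basic condition.
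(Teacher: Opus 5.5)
Your proposal is correct and follows the paper's own proof. The steps match: $Z$-basicness of $\pi_0^*\alpha$ comes from $\pi$ being a homomorphism, $\beta$ descends along the free $G$-action on $Z_0$, and $Y$-basicness of $\alpha$ comes from $s_Z^*\beta=t_Z^*\beta$; your section $\tau(\gamma)=(\gamma,e)$ is just an explicit form of the paper's appeal to injectivity of $\pi_1^*$. Your remark that the descent needs global invariance $R_a^*\beta=\beta$, not only $\mathcal L_{\xi}\beta=0$, when $G$ is disconnected is something the paper also uses, implicitly, when it calls $\beta$ ``$G$-basic''.
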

\begin{proof}
If \(\alpha\in\Omega^\bullet_{\bas}(Y)\), then
\[
s_Z^*\pi_0^*\alpha
=
\pi_1^*s_Y^*\alpha
=
\pi_1^*t_Y^*\alpha
=
t_Z^*\pi_0^*\alpha,
\]
so
\[
\pi_0^*\alpha\in\Omega^\bullet_{\bas}(Z).
\]
Moreover, \(\pi_0^*\alpha\) is horizontal and invariant for the strict
\(G\)-action, hence
\[
\pi_0^*\alpha\in
\left(\Omega^\bullet_{\bas}(Z)\right)_{\bas}.
\]
Conversely, let
\(
\beta\in\left(\Omega^\bullet_{\bas}(Z)\right)_{\bas}.
\)
Since the \(G\)-action on
\(
Z_0
\)
is free and \(\beta\) is \(G\)-basic, there is a unique form
\(
\alpha\in\Omega^\bullet(Y_0)
\)
such that
\(
\beta=\pi_0^*\alpha.
\)
Since \(\beta\) is \(Z\)-basic,
\(
s_Z^*\beta=t_Z^*\beta.
\)
Therefore
\[
\pi_1^*s_Y^*\alpha=\pi_1^*t_Y^*\alpha.
\]
Because
\[
\pi_1:Y_1\times G_0\to Y_1
\]
is a surjective submersion, pullback by \(\pi_1\) is injective. Hence
\[
s_Y^*\alpha=t_Y^*\alpha.
\]
Thus
\(
\alpha\in\Omega^\bullet_{\bas}(Y).
\)
Therefore
\[
A_{\bas}
=
\left(\Omega^\bullet_{\bas}(Z)\right)_{\bas}
\cong
\Omega^\bullet_{\bas}(Y).
\]
\end{proof}

We prove the following theorem.
\begin{theorem}
    Let $X\hookleftarrow Y\xrightarrow{p} BG$ be a Kan fibration. 
  Suppose $X$ is proper and compact, $G$ is compact.  If the $G$ action on $X$ is locally free, then 
  \[
  H^\ast_{G,\bas} (X)\cong 
  H^\ast_\bas (Y).
  \]
\end{theorem}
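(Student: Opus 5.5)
The plan is to reduce the statement to Theorem~\ref{T_C}, applied to the $G^\ast$-algebra $A^\bullet=\Omega^\bullet_{\bas}(Z)$ with $Z=Y\times_{BG}EG$.

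First, by the definition of $H^\bullet_{G,\bas}(X)$, together with the equivalence $f:X\to Z$ of the preceding theorem and the Morita invariance of the basic complex, we have
\[
H^\ast_{G,\bas}(X)=H^\ast_G\bigl(\Omega^\bullet_{\bas}(Z)\bigr).
\]
Here the $G^\ast$-structure comes from the strict free right action \eqref{E_strict_free} on $Z$.

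Next, we check the hypotheses of Proposition~\ref{P_Z_C} for $Z$.
\begin{enumerate}[(i)]
\item The action on $Z_0=Y_0\times G_0$ and on $Z_1\cong Y_1\times G_0$ is free, since $G$ acts by right translation on the $G_0$ factor.
\item $Z$ is proper with compact orbit space. The equivalence $f$ transports properness from $X$, and gives a homeomorphism $|X|\cong|Z|$, so $|Z|$ is compact because $X$ is compact.
\item The induced action on $|Z|$ is locally free. Under $|X|\cong|Z|$, the action on $|Z|$ corresponds to the action on $|X|$ induced by the Kan fibration. Concretely, the stabilizer of $[(x,h)]$ in $|Z|$ is a conjugate of $p_1(\widehat\Gamma_x)$ by $h$, and this group is discrete by \eqref{E_isotropy} and the local freeness assumption.
\end{enumerate}
Proposition~\ref{P_Z_C} then gives a connection form $\theta\in\Omega^1_{\bas}(Z)\otimes\mathfrak g$, so $A$ satisfies condition (C).

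By Theorem~\ref{T_C}, $H_G^\bullet(A)\cong H^\bullet(A_{\bas})$. By Proposition~\ref{P_ZandY}, $\pi_0^*$ identifies $A_{\bas}$ with $\Omega^\bullet_{\bas}(Y)$ as cochain complexes, since $\pi_0^*$ commutes with $\dd$. Chaining these isomorphisms gives
\[
H^\ast_{G,\bas}(X)\cong H^\ast(A_{\bas})\cong H^\ast_{\bas}(Y).
\]
One could equally compare through Proposition~\ref{P_ZtoY}, using $Z/G\cong Y$.

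The main obstacle is step (iii), together with properness of $Z$. One must check carefully that the equivalence $f:X\to Z$ intertwines the coarse actions. This means computing, for an arrow $\gamma\in Y_1$ with $p_1(\gamma)=a$, that the $Z$-orbit of $(x,h)$ meets $(x,ha)$ exactly when $a$ lies in $h^{-1}p_1(\widehat\Gamma_x)h$-type cosets, using the formula for $t_Z$. I would do this computation first, directly from the explicit structure maps of $Z$. Properness of $Z$ (and of $Z\rtimes BG$, which is used inside Proposition~\ref{P_Z_C}) I would deduce from properness of $X$ and compactness of $G$ via invariance of properness under Morita equivalence.
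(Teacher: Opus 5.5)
Your proposal is correct and follows essentially the same route as the paper. Both argue $H^\ast_{G,\bas}(X)\cong H^\ast_G(\Omega^\bullet_{\bas}(Z))\cong H^\ast\bigl((\Omega^\bullet_{\bas}(Z))_{\bas}\bigr)\cong H^\ast_{\bas}(Y)$, using Proposition~\ref{P_Z_C} together with Theorem~\ref{T_C} and then Proposition~\ref{P_ZandY}; your explicit checks of freeness, properness and compactness via the equivalence $f$, and of the stabilizer $h^{-1}p_1(\widehat\Gamma_x)h$ on $|Z|$, fill in what the paper asserts in one line.
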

\begin{proof}
   Since  $X$ is compact and proper, so is $Z$. Then 
   \[
  H^\ast_{G,\bas} (X)\cong
  H^\ast_{G,\bas} (Z)\cong
  H^\ast ((\Omega^\bullet_\bas(Z))_\bas,d)\cong H^\ast(\Omega^\bullet_\bas(Y),d)=
  H^\ast_\bas (Y).
  \] 
  We use Proposition \ref{P_Z_C} and Theorem \ref{T_C} for the second
  isomorphism, Proposition \ref{P_ZandY} for the third isomorphism.
 \end{proof}

\section{Equivariant Bott--Shulman--Stasheff cohomology}\label{Sec_equi_BSS}\label{Sec_equiv_bss}

The Bott--Shulman--Stasheff complex is the simplicial de Rham complex of the nerve of a Lie groupoid~\cite{BottShulmanStasheff1976}. Its cohomology is Morita invariant and is commonly regarded as the de Rham cohomology of the differentiable stack presented by the groupoid; see Behrend's~\cite{Behrend2005} construction of de Rham cohomology of differentiable stacks via a groupoid double complex, and Behrend--Xu~\cite{BehrendXu2011} for Morita invariance of such stack cohomology. Usually, 
$H_{G,\BSS}^\ast(X)$ is defined to be $H^\ast_\BSS(X\rtimes BG)$ (cf. Definition \ref{D_equiv_BSS}). In this section, we replace $X$ by its equivalence $Z$ and show that $H^\ast_{G,\BSS}(X)$ can be accomplished by a Weil model on $\Omega^\bullet_\BSS(Z)$ (cf. Theorem \ref{T_BSS}).

For any Lie groupoid
\[
Y=(Y_1\rrto Y_0),
\]
let \(Y_\bullet\) be its nerve. Thus \(Y_q\) is the manifold of composable
\(q\)-tuples of arrows. The Bott--Shulman--Stasheff double complex is
\[
\Omega^{p,q}_{\mathrm{BSS}}(Y):=\Omega^p(Y_q),
\]
with de Rham differential \(d\) and simplicial differential
\[
\delta=\sum_i(-1)^i d_i^*.
\]
The total complex is denoted
\[
(\Omega^\bullet_{\mathrm{BSS}}(Y),D),
\]
where
\[
\Omega^n_{\mathrm{BSS}}(Y)
=
\bigoplus_{p+q=n}\Omega^p(Y_q),
\qquad
D=d+(-1)^p\delta
\]
on \(\Omega^p(Y_q)\). Its cohomology is
\[
H^\bullet_{\mathrm{BSS}}(Y)
:=
H^\bullet(\Omega^\bullet_{\mathrm{BSS}}(Y),D).
\]

\begin{defn}[\(G\)-equivariant BSS cohomology]\label{D_equi_BSS}\label{D_equiv_BSS}
Let
\[
X\hookrightarrow Y\xrightarrow{p}BG
\]
be a Kan fibration.
The \(G\)-equivariant Bott--Shulman--Stasheff cohomology of \(X\) is defined by
\[
H^\bullet_{G,\mathrm{BSS}}(X)
:=
H^\bullet_{\mathrm{BSS}}(Y).
\]
Equivalently,
\[
H^\bullet_{G,\mathrm{BSS}}(X)
=
H^\bullet\bigl(\Omega^\bullet_{\mathrm{BSS}}(Y),D\bigr).
\]
\end{defn}

\begin{prop}
Let
\(
Z
\)
be a Lie groupoid with a strict right \(G\)-action. Assume that \(G\) acts freely
on both \(Z_0\) and \(Z_1\). Then
\[
\Omega^\bullet_{\BSS}(Z)
\]
is a \(G^\ast\)-algebra. Moreover, it satisfies condition \((C)\).
\end{prop}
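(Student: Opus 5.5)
We first put the \(G^\ast\)-structure on \(\Omega^\bullet_{\BSS}(Z)\) levelwise, and then build the connection form on the nerve level \(Z_0\) and spread it to all higher levels by pullback.

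\textbf{The \(G\)-action on the nerve.} Since the action on \(Z\) is strict, \(G\) acts diagonally on each nerve level \(Z_q\), the manifold of composable \(q\)-tuples:
\[
(\gamma_1,\dots,\gamma_q)\cdot a=(\gamma_1\cdot a,\dots,\gamma_q\cdot a).
\]
Because all structure maps of \(Z\) are \(G\)-equivariant, every face map \(d_i:Z_q\to Z_{q-1}\) and degeneracy map is \(G\)-equivariant. The action on \(Z_q\) is free for \(q\ge 1\), since it is already free on the first factor \(Z_1\).

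\textbf{The \(G^\ast\)-algebra structure.} For \(\xi\in\mathfrak g\), let \(\xi_q\) be the fundamental vector field on \(Z_q\). Equivariance of \(d_i\) means that \(\xi_q\) and \(\xi_{q-1}\) are \(d_i\)-related. Define \(\iota_\xi\) and \(\mathcal L_\xi\) on \(\Omega^p(Z_q)\) as \(\iota_{\xi_q}\) and \(\mathcal L_{\xi_q}\), and let \(G\) act by pullback along the right action. Relatedness gives
\[
d_i^*\iota_{\xi_{q-1}}=\iota_{\xi_q}d_i^*,\qquad d_i^*\mathcal L_{\xi_{q-1}}=\mathcal L_{\xi_q}d_i^*,
\]
so \(\iota_\xi\) and \(\mathcal L_\xi\) commute with \(\delta\). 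To make \(\iota_\xi\) a degree \(-1\) derivation of the total complex, we insert the same sign \((-1)^q\) used in \(D=d+(-1)^p\delta\), taking \(\iota_\xi\) to act on \(\Omega^p(Z_q)\) by \((-1)^q\iota_{\xi_q}\) (or whichever convention matches \(D\)). With this choice, the Cartan relations \([D,\iota_\xi]=\mathcal L_\xi\), \([D,\mathcal L_\xi]=0\), \([\mathcal L_\xi,\iota_\eta]=\iota_{[\xi,\eta]}\) and \([\iota_\xi,\iota_\eta]=0\) follow from the levelwise Cartan identities on each \(Z_q\) together with commutation with \(\delta\).

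The product is the usual Alexander--Whitney (cup) product on the BSS complex, built from front and back face maps. These face maps are equivariant, so \(\iota_\xi\) and \(\mathcal L_\xi\) act as derivations of it. The cup product is only graded-commutative up to homotopy, so the statement should be read with "algebra" meaning dg algebra. If full commutativity is needed, one can instead work with the \(G^\ast\)-module structure, which is all the Weil model requires.

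\textbf{Condition \((C)\).} Since \(G\) acts freely on \(Z_0\), the \(G\)-action on \(Z_0\) is a principal bundle \(Z_0\to Z_0/G\). This presumes that the quotient is a manifold; that holds for \(Z=Y\times_{BG}EG\), where the bundle is \(Y_0\times G\to Y_0\), and in general we would assume proper free actions. Choose an ordinary principal connection \(\theta_0\in\Omega^1(Z_0)\otimes\mathfrak g\); for \(Z=Y\times_{BG}EG\) we take the Maurer--Cartan form pulled back from the \(G_0\) factor. This needs no compactness and no basicness in the groupoid direction, unlike Proposition~\ref{P_Z_C}. The connection element is
\[
\theta=\theta_0\in\Omega^1(Z_0)=\Omega^{1,0}_{\BSS}(Z).
\]
It satisfies \(\iota_\xi\theta=\xi\) and \(R_a^*\theta=\operatorname{Ad}_{a^{-1}}\theta\) because these already hold on \(Z_0\).

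\textbf{Main obstacle.} The delicate point is that the Guillemin--Sternberg notion of a connection requires \(\theta\) to live in the algebra with the correct total degree \(1\), which \(\Omega^{1,0}\) has. However, \(\delta\theta=d_0^*\theta-d_1^*\theta\neq 0\) in general, so the curvature \(D\theta+\tfrac12[\theta,\theta]\) has a component in \(\Omega^{1}(Z_1)\) as well as in \(\Omega^2(Z_0)\). We therefore need to check that condition \((C)\) only requires \(\theta\) to be a degree-one element with \(\iota_\xi\theta=\xi\) that is \(G\)-equivariant, with the curvature defined via \(D\). In that case the inhomogeneous curvature is harmless: it is horizontal, since \(\iota_\xi\delta\theta=\delta\iota_\xi\theta=\delta\xi=0\) because \(\xi\) is constant and \(d_0^*\xi-d_1^*\xi=0\). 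This horizontality check is where the sign conventions must be verified carefully, and I would carry it out first to fix the convention for \(\iota_\xi\) on the total complex.
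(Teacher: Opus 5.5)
\textbf{Overall.} Your proposal follows the paper's proof step for step: the diagonal action on the nerve, equivariant face maps so that the levelwise $\iota_\xi,\mathcal L_\xi$ commute with $\delta$, and a principal connection on $Z_0$ placed in bidegree $(1,0)$. Your caveat that freeness should come with properness is a fair sharpening; properness is automatic for $Z=Y\times_{BG}EG$.

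\textbf{The sign on the contraction.} This is the one step that fails as written. The differential $D=d+(-1)^p\delta$ contains no $(-1)^q$. Its $(-1)^p$ is exactly what makes the \emph{untwisted} levelwise contraction work, because $\iota_\xi$ (meaning $\iota_{\xi_q}$ on $\Omega^p(Z_q)$) lowers $p$ by one. On $\Omega^p(Z_q)$ one has
\[
D\iota_\xi+\iota_\xi D=\bigl(d\iota_\xi+\iota_\xi d\bigr)+(-1)^{p-1}\delta\iota_\xi+(-1)^{p}\iota_\xi\delta=\mathcal L_\xi ,
\]
using $\iota_\xi\delta=\delta\iota_\xi$. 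With your choice $\iota'_\xi:=(-1)^q\iota_\xi$, the same computation gives
\[
D\iota'_\xi+\iota'_\xi D=(-1)^q\mathcal L_\xi+2(-1)^{p+q+1}\delta\iota_\xi ,
\]
so the Cartan relation $[D,\iota_\xi]=\mathcal L_\xi$ breaks. The $(-1)^q$ twist is what the other convention $\delta+(-1)^q d$ would need. Drop the twist, as the paper does. Your $\theta$ is unaffected, since it sits at $q=0$.

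\textbf{The ``main obstacle.''} The point you single out is not an obstacle. Condition $(C)$ only asks for elements $\theta^a$ of total degree $1$ with $\iota_{\xi_a}\theta^b=\delta_a^b$, spanning a coadjoint-invariant subspace. Horizontality of the curvature then follows formally from the Cartan relations, whatever bidegrees the curvature has.

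\textbf{The real subtlety.} It is the one you mention only in passing, and the paper's proof ignores it as well. With the cup product, $\Omega^\bullet_{\BSS}(Z)$ is not graded commutative, hence it is not a $G^\ast$-algebra in the Guillemin--Sternberg sense. Falling back to the $G^\ast$-module structure suffices to define the Weil model, but not to exploit $(C)$. The standard proof of Theorem~\ref{T_C} runs through the Chern--Weil homomorphism $W(\mathfrak g)\to A$, which must be an algebra map. Here it is not: the $\Omega^1(Z_1)$-component $-\delta\theta^a$ of $D\theta^a$ does not graded-commute with $\theta^b$. The two cup products pull $\theta^b$ back along the two different vertex maps $Z_1\to Z_0$, and they differ by $\pm\,\delta\theta^a\wedge\delta\theta^b$, which need not vanish.

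\textbf{What this means for your proof.} Once the sign is fixed, your argument establishes the formal statement exactly as well as the paper's does. For the intended application, Theorem~\ref{T_BSS}, the robust route is columnwise:
\begin{enumerate}
\item Each $\Omega^\bullet(Z_q)$ is an honest commutative $G^\ast$-algebra of type $(C)$. For $Z=Y\times_{BG}EG$ one has $Z_q\cong Y_q\times G$, with $G$ acting by right multiplication on the last factor.
\item The Weil complex of $\Omega^\bullet_{\BSS}(Z)$ is a double complex whose $q$-th column is the Weil complex of $\Omega^\bullet(Z_q)$.
\item Filtering by $q$ compares this double complex with $\Omega^\bullet_{\BSS}(Y)$ column by column, via the pullbacks $\pi_q^\ast$.
\end{enumerate}
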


\begin{proof}
Since the \(G\)-action on \(Z\) is strict and free, it acts on every nerve space
\[
Z_q=Z_1\times_{Z_0}\cdots\times_{Z_0}Z_1
\]
by the diagonal action
\[
(\gamma_1,\ldots,\gamma_q)\cdot a
=
(\gamma_1\cdot a,\ldots,\gamma_q\cdot a).
\]
For \(q=0\), this is the given action on \(Z_0\). Hence \(G\) acts freely on
every \(Z_q\).
All face and degeneracy maps of the nerve are \(G\)-equivariant. Therefore the
pullback action, contraction, and Lie derivative define operators on each
\(
\Omega^p(Z_q).
\)
For \(\xi\in\mathfrak g\), let \(\xi_q^\#\) be the fundamental vector field on
\(Z_q\). Define
\[
\iota_\xi|_{\Omega^p(Z_q)}:=\iota_{\xi_q^\#},
\qquad
\mathcal L_\xi|_{\Omega^p(Z_q)}:=\mathcal L_{\xi_q^\#}.
\]
Since the face maps are \(G\)-equivariant, contraction and Lie derivative
commute with the simplicial differential \(\delta\). Thus, for the BSS total
differential
\[
D=d+(-1)^p\delta
\qquad\text{on }\Omega^p(Z_q),
\]
one has
\[
[D,\iota_\xi]=\mathcal L_\xi,
\qquad
[D,\mathcal L_\xi]=0.
\]
Together with the usual Cartan identities on each \(Z_q\), this makes
\(
\Omega^\bullet_{\BSS}(Z)
\)
a \(G^\ast\)-algebra.

Now, since \(G\) acts freely on \(Z_0\), the projection
\[
Z_0\to Z_0/G
\]
is a principal \(G\)-bundle. Choose a principal connection form
\(
\theta\in\Omega^1(Z_0)\otimes\mathfrak g.
\)
Write
\[
\theta=\sum_a \theta^a\otimes \xi_a
\]
with respect to a basis \(\{\xi_a\}\) of \(\mathfrak g\). Since
\[
\Omega^1(Z_0)\subset \Omega^1_{\BSS}(Z)
\]
as the component of bidegree \((1,0)\), each \(\theta^a\) is an element of total
degree \(1\) in \(\Omega^\bullet_{\BSS}(Z)\).
Therefore \(\Omega^\bullet_{\BSS}(Z)\) satisfies condition \((C)\).
\end{proof}

\begin{remark}
One may use $\delta+(-1)^qd$
as the differential on $\Omega_\BSS^\bullet$. However, such a definition is not compatible with $G^\ast$ structure.
\end{remark}

\begin{prop}
Let
\[
X\hookrightarrow Y\xrightarrow{p}BG
\]
be a Kan fibration, and set
\[
Z:=Y\times_{BG}EG.
\]
Then
\[
\bigl(\Omega^\bullet_{\BSS}(Z)\bigr)_{\bas}
\cong
\Omega^\bullet_{\BSS}(Y)
\]
as cochain complexes. 
\end{prop}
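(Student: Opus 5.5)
The plan is to extend Proposition \ref{P_ZandY} level by level along the nerve. First I identify the nerve of \(Z=Y\times_{BG}EG\) explicitly. Since \(Z_1\cong Y_1\times G_0\) with \(s_Z(\gamma,h)=(s_Y(\gamma),h)\), a composable \(q\)-tuple in \(Z\) is determined by a composable \(q\)-tuple \((\gamma_1,\ldots,\gamma_q)\in Y_q\) together with the \(G_0\)-coordinate of its first (source) point. The remaining coordinates are forced by the rule \(t_Z(\gamma,h)=(t_Y(\gamma),p_1(\gamma^{-1})h)\). Hence
\[
Z_q\cong Y_q\times G_0,
\]
and the strict right \(G\)-action of \eqref{E_strict_free} becomes right multiplication on the \(G_0\) factor. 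The projections \(\pi_q:Z_q\to Y_q\), \((\gamma_1,\ldots,\gamma_q,h)\mapsto(\gamma_1,\ldots,\gamma_q)\), are therefore trivial principal \(G\)-bundles. This matches Proposition \ref{P_ZtoY}, which gives \(Z_q/G\cong Y_q\). I will also check that \(\pi_\bullet\) is a simplicial map, i.e. \(d_i\circ\pi_q=\pi_{q-1}\circ d_i\). This holds because \(\pi\) is a groupoid homomorphism \(Z\to Z/G=Y\) and face maps are built from \(s,t,m\).

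Second, I apply the standard fact for principal bundles level by level: for the free proper action on \(Z_q\), pullback
\[
\pi_q^*:\Omega^p(Y_q)\xrightarrow{\cong}\Omega^p(Z_q)_{\bas}
\]
is an isomorphism onto the \(G\)-basic forms, those that are horizontal and invariant. Here this is even more elementary, since \(Z_q=Y_q\times G_0\) and the action is right multiplication on \(G_0\). A horizontal invariant form has no \(G_0\)-legs and is constant along \(G_0\). Because the \(G^\ast\)-operations \(\iota_\xi,\mathcal L_\xi\) on \(\Omega^\bullet_\BSS(Z)\) act bidegree-wise, the basic subcomplex splits as
\[
\bigl(\Omega^\bullet_\BSS(Z)\bigr)_{\bas}=\bigoplus_{p,q}\Omega^p(Z_q)_{\bas}.
\]
Summing the level-wise isomorphisms gives a linear isomorphism \(\pi^*:=\bigoplus\pi_q^*:\Omega^\bullet_\BSS(Y)\to(\Omega^\bullet_\BSS(Z))_{\bas}\) that preserves bidegree.

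Third, I check that \(\pi^*\) is a chain map for \(D=d+(-1)^p\delta\). Compatibility with \(d\) is naturality of pullback. Compatibility with \(\delta\) follows from \(d_i^*\pi_{q-1}^*=\pi_q^*d_i^*\), which is the simpliciality of \(\pi_\bullet\). The sign \((-1)^p\) is preserved because \(\pi^*\) keeps the form degree \(p\). A bijective chain map is an isomorphism of complexes, which proves the statement.

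The main obstacle I expect is the first step: verifying carefully that \(Z_q\cong Y_q\times G_0\) compatibly with all face maps. The point needing care is the last face map \(d_q\), and \(d_0\) depending on convention, which drops an arrow and hence shifts the \(G_0\)-coordinate by \(p_1(\gamma^{-1})\). I would handle this by never writing the identification in coordinates for the face maps. Instead I use only that \(\pi:Z\to Y\) is a strict groupoid homomorphism, so \(\pi_\bullet\) is simplicial automatically, and that each \(\pi_q\) is the quotient of a free action with local triviality inherited from \(Z_0\to Y_0\). Since \(Z_q=Z_1\times_{Z_0}\cdots\) is pulled back from \(Y_q\times_{Y_0}Z_0\), the principal bundle structure on \(Z_q\) comes for free.
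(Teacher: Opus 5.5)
Your proposal is correct and follows essentially the same route as the paper: regard each $\pi_q:Z_q\to Y_q$ as a principal $G$-bundle, use $\pi_q^*:\Omega^p(Y_q)\cong\Omega^p(Z_q)_{\bas}$ level by level, and sum over bidegrees. You also supply details the paper leaves implicit. These are the explicit trivialization $Z_q\cong Y_q\times G_0$, the bidegree-wise splitting of the basic subcomplex, and the check that $\pi^*$ intertwines $D=d+(-1)^p\delta$ because $\pi_\bullet$ is simplicial and preserves form degree.
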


\begin{proof}
The strict \(G\)-action on \(Z\) is free and, by Proposition \ref{P_ZtoY},
\(
Z/G\cong Y.
\)
For each \(q\geq 0\), 
\[
\pi_q:Z_q\to Y_q
\]
is a principal \(G\)-bundle, pullback
identifies differential forms on \(Y_q\) with \(G\)-basic forms on \(Z_q\):
\[
\pi_q^*:\Omega^p(Y_q)\xrightarrow{\cong}\Omega^p(Z_q)_{\bas}.
\]
Therefore, for every total degree \(n\),
\[
\Omega^n_{\BSS}(Y)
=
\bigoplus_{p+q=n}\Omega^p(Y_q)
\cong
\bigoplus_{p+q=n}\Omega^p(Z_q)_{\bas}
=
\bigl(\Omega^n_{\BSS}(Z)\bigr)_{\bas}.
\]
 Therefore it is an
isomorphism of cochain complexes.
\end{proof}

Consequently, since \(\Omega^\bullet_{\BSS}(Z)\) is a \(G^\ast\)-algebra of
condition \((C)\), we have
\[
H_G^\bullet\bigl(\Omega_{\BSS}(Z)\bigr)
\cong
H^\bullet\bigl((\Omega_{\BSS}(Z))_{\bas}\bigr).
\]
Using the proposition,
\[
H^\bullet\bigl((\Omega_{\BSS}(Z))_{\bas}\bigr)
\cong
H^\bullet(\Omega_{\BSS}(Y))
=
H^\bullet_{\BSS}(Y).
\]
By definition,
\[
H^\bullet_{G,\BSS}(X)
:=
H^\bullet_{\BSS}(Y).
\]
Thus
\begin{theorem}\label{T_BSS}
$ H^\bullet_{G,\BSS}(X)
\cong
H_G^\bullet\bigl(\Omega_{\BSS}(Z),D\bigr),
$ where 
\[
H^\bullet_{G}(\Omega_\BSS(Z),D)
:=
H^\bullet\left(
\left(W(\mathfrak g)\otimes \Omega^\bullet_{\BSS}(Z)\right)_{\bas},
d_W\otimes 1+1\otimes D
\right).
\]
\end{theorem}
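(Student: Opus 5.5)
The isomorphism will come from a chain of identifications, each already set up in this section. By Definition \ref{D_equiv_BSS}, $H^\bullet_{G,\BSS}(X)=H^\bullet_{\BSS}(Y)$, so it suffices to show
\[
H^\bullet_{\BSS}(Y)\cong H^\bullet\left(\left(W(\mathfrak g)\otimes \Omega^\bullet_{\BSS}(Z)\right)_{\bas},d_W\otimes 1+1\otimes D\right),
\]
where $Z=Y\times_{BG}EG$ carries the strict right $G$-action \eqref{E_strict_free}. That action is free on $Z_0=Y_0\times G_0$ and on $Z_1=Y_1\times G_0$, since $G$ acts on the $G_0$ factor by right multiplication. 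Hence the preceding proposition applies: $\Omega^\bullet_{\BSS}(Z)$ is a $G^\ast$-algebra satisfying condition $(C)$.

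First, I would apply Theorem \ref{T_C} to $A=\Omega^\bullet_{\BSS}(Z)$ with differential $D$. The natural map $A_{\bas}\to (W(\mathfrak g)\otimes A)_{\bas}$, $\alpha\mapsto 1\otimes\alpha$, is then a quasi-isomorphism, so $H^\bullet(A_{\bas},D)\cong H^\bullet_G(\Omega_{\BSS}(Z),D)$. Second, I would use the last proposition, $(\Omega^\bullet_{\BSS}(Z))_{\bas}\cong\Omega^\bullet_{\BSS}(Y)$ via the maps $\pi_q^*$. Composing the two isomorphisms gives the claim.

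The one place that needs care is verifying that the second identification really is a chain isomorphism, i.e.\ that the maps $\pi_q^*$ commute with both pieces of $D=d+(-1)^p\delta$. For $d$ this is automatic. For $\delta$, I would check that the projections $\pi_q:Z_q\to Y_q$, $(\gamma_1,\dots,\gamma_q,h)\mapsto(\gamma_1,\dots,\gamma_q)$, form a map of simplicial manifolds, so that $d_i^*\pi_{q}^*=\pi_{q+1}^*d_i^*$. This follows because $\pi:Z\to Y$ is a groupoid homomorphism: $s_Z,t_Z$ and multiplication cover $s_Y,t_Y$ and multiplication in $Y$, by the explicit formulas given for $Z$. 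The sign $(-1)^p$ is preserved because $\pi_q^*$ preserves form degree.

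I would also confirm that $\pi_q:Z_q\to Y_q$ is a principal $G$-bundle, so that basic forms on $Z_q$ are exactly pullbacks from $Y_q$. Using the description $Z_q\cong Y_q\times G_0$ (the nerve of $Z$ is the pullback of the nerve of $Y$ along $EG\to BG$), this is a trivial bundle with $G$ acting on the second factor. I also need that the $G$-basic elements of the total complex are computed degreewise, which holds because $\iota_\xi$ and $\mathcal L_\xi$ preserve the bigrading. Finally, the $G^\ast$ structure must be the one attached to $D=d+(-1)^p\delta$ rather than $\delta+(-1)^q d$, as in the remark, so that the Cartan relations hold and Theorem \ref{T_C} applies. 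I expect this bookkeeping of the nerve identification and the signs to be the only real obstacle; the rest is formal.
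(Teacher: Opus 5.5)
Your proposal is correct and is the paper's own argument: Definition~\ref{D_equiv_BSS}, then Theorem~\ref{T_C} applied to $\Omega^\bullet_{\BSS}(Z)$ via the preceding condition-$(C)$ proposition, then the identification $\bigl(\Omega^\bullet_{\BSS}(Z)\bigr)_{\bas}\cong\Omega^\bullet_{\BSS}(Y)$ through the $\pi_q^\ast$; your checks that the $\pi_q$ form a simplicial map and that basic elements are computed bidegreewise fill in what the paper only asserts. The one subtlety you share with the paper is that $\Omega^\bullet_{\BSS}(Z)$ is not graded-commutative under the cup product, while the standard proof of Theorem~\ref{T_C} uses commutativity; to sidestep this, filter $\bigl(W(\mathfrak g)\otimes\Omega^\bullet_{\BSS}(Z)\bigr)_{\bas}$ by simplicial degree $q$, observe that each column $\bigl(W(\mathfrak g)\otimes\Omega^\bullet(Z_q)\bigr)_{\bas}$ is quasi-isomorphic to $\Omega^\bullet(Y_q)$ because $Z_q\cong Y_q\times G_0$ is a principal $G$-bundle, and compare spectral sequences, which converge since the filtration is finite in each total degree.
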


\section{Equivariant cohomology on orbifolds with weak group actions}\label{Sec_equiv_orb}

An orbifold is known as a proper \'etale Lie groupoid.
For a smooth manifold, differential-geometric objects are usually described by bundles and tensor fields on the manifold: differential forms, vector fields, Riemannian metrics, almost complex structures, symplectic forms, characteristic forms, and so on.
For an orbifold presented by a proper \'{e}tale Lie groupoid,
the same objects are described by $X$-equivariant geometric data on $X_
0$.

Thus differential geometry on manifolds extends to orbifolds by replacing ordinary tensor fields on manifolds with invariant, or basic, tensor fields on a proper \'{e}tale groupoid presentation.
This is why equivariantizing the basic complex is geometrically meaningful: it gives the equivariant version of the ordinary differential-geometric de Rham theory of an orbifold.

    \subsection{Differential geometry on orbifolds}
Let
\[
        X=(X_1\rightrightarrows X_0)
\]
be a proper \'{e}tale Lie groupoid, i.e, an orbifold. 
Define 
\[
TX=(TX_1\rrto TX_0)
\]
whose source and target maps are $ds$ and $dt$. 
Since \(X\) is \'{e}tale, the source and
target maps
\[
        s,t:X_1\to X_0
\]
are local diffeomorphisms.  Hence every arrow
\(
        g:x\to y
\)
of \(X\) determines a germ of a local diffeomorphism from a neighborhood of
\(x\) to a neighborhood of \(y\).  Taking differentials gives a canonical
linear isomorphism
\[
        dg:T_xX_0\longrightarrow T_yX_0 .
\]
Thus \(X\) acts canonically on the tangent bundle
\(TX_0\).  We know    
\[
        TX=TX_0\rtimes X .
\]

Once \(TX\) is defined, the usual tensor constructions give well-defined tensor
bundles on the orbifold:
\[
        T^*X,\qquad
        \Lambda^kT^*X,\qquad
        \operatorname{Sym}^kT^*X,\qquad
        TX\otimes T^*X,
\]
and so on.  Concretely, these are represented by the corresponding tensor
bundles on \(X_0\), equipped with the induced \(X\)-action.

Therefore tensor fields on the orbifold are precisely \(X\)-invariant sections
of these equivariant tensor bundles.  For example, a differential \(k\)-form on
the orbifold is an \(X\)-invariant section of \(\Lambda^kT^*X\).  In terms of
the groupoid presentation, this is exactly a form
\(
        \alpha\in \Omega^k(X_0)
\)
satisfying
\(
        s^*\alpha=t^*\alpha .
\)
Thus
\[
        \Omega^k_{\bas}(X)
        =
        \Gamma(\Lambda^kT^*X)
        =
        \left\{
        \alpha\in\Omega^k(X_0)
        \ \middle|\
        s^*\alpha=t^*\alpha
        \right\}.
\]

This explains why the complex of basic forms is the natural de Rham complex for
orbifold differential geometry.  It is the direct analogue of
\(\Omega^\bullet(M)\) for a smooth manifold \(M\).  In particular, geometric
structures such as Riemannian metrics, almost complex structures, symplectic
forms, characteristic forms, and volume forms are described by the corresponding
\(X\)-invariant tensor fields on \(X_0\).

Given a Kan fibration \eqref{E_KF} where $X$ is an orbifold.
For \(i=0,1\), consider the relative tangent bundle of
\(p_i\):
\[
        T_pY_i:=\ker(dp_i)\subset TY_i .
\]
Since \((BG)_0=*\), we have
\[
        T_pY_0=\ker(dp_0)=TY_0 .
\]
Since \(p:Y\to BG\) is a Kan fibration, the map \(p_1:Y_1\to G\) is a
submersion, so
\[
        T_pY_1=\ker(dp_1)\subset TY_1
\]
is a smooth vector subbundle.

The structure maps of \(Y\) restrict to the relative tangent bundles.  Indeed,
if
\[
        v_\gamma\in \ker(dp_1)\subset T_\gamma Y_1,
\]
then
\[
        ds_Y(v_\gamma),\,dt_Y(v_\gamma)\in TY_0=T_pY_0 .
\]
Moreover, because \(p_1\) is multiplicative, the differential of multiplication
sends composable relative tangent vectors to relative tangent vectors.  Thus we
obtain a Lie groupoid
\[
        T_pY:=(T_pY_1\rightrightarrows T_pY_0).
\]
There is a natural Lie groupoid homomorphism
\[
        p^T:T_pY\longrightarrow BG
\]
defined on objects by the unique map
\[
        T_pY_0=TY_0\longrightarrow *,
\]
and on arrows by
\[
        p^T_1(v_\gamma):=p_1(\gamma),
        \qquad
        v_\gamma\in T_\gamma Y_1,\quad dp_1(v_\gamma)=0.
\]

\begin{prop}
The homomorphism
\[
        p^T:T_pY\longrightarrow BG
\]
is a Kan fibration.  Its fiber is the tangent groupoid of \(X\):
\[
        \ker(p^T)=TX.
\]
Thus \(p^T:T_pY\to BG\) represents the weak \(G\)-action induced on the tangent
bundle of \(X\).
\end{prop}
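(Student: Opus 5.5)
The plan is to check three things directly from the definition of a Kan fibration over \(BG\): that \(T_pY\) is a Lie groupoid and \(p^T\) a homomorphism, that \((s_{T_pY},p^T_1)\) is a surjective submersion, and that the fiber is \(TX\).

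First, the homomorphism property. The structure maps of \(T_pY\) are the differentials \(ds_Y,dt_Y,du_Y,di_Y,dm_Y\), restricted to \(\ker(dp_1)\). Since \(p_1\circ m_Y=m_G\circ(p_1\times p_1)\), a composable pair \((v_\gamma,w_\eta)\) has \(p^T_1(dm_Y(v_\gamma,w_\eta))=p_1(\gamma\eta)=p_1(\gamma)p_1(\eta)\). Here composability in \(T_pY\) means the pair lies in \(T(Y_1\times_{Y_0}Y_1)\cap(\ker dp_1\times\ker dp_1)\). Smoothness of \(T_pY_1\) as a manifold was already noted, since \(p_1\) is a submersion.

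Second, the Kan condition, which I expect to be the main point. I must show that
\[
(ds_Y,\pi):T_pY_1\to TY_0\times G,\qquad v_\gamma\mapsto(ds_Y(v_\gamma),p_1(\gamma)),
\]
is a surjective submersion. Consider \(\Phi=(s_Y,p_1):Y_1\to Y_0\times G\), which is a surjective submersion by hypothesis. Its differential \(d\Phi=(ds_Y,dp_1)\) is surjective at every \(\gamma\). Given \((w,a)\in T_xY_0\times G\), pick \(\gamma\) with \(\Phi(\gamma)=(x,a)\), and pick \(v\in T_\gamma Y_1\) with \(d\Phi(v)=(w,0)\). Then \(v\in\ker dp_1\) and \(ds_Y(v)=w\), which gives surjectivity.

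For the submersion property, observe that \(T_pY_1=d\Phi^{-1}(TY_0\times 0_G)\), where \(0_G\) is the zero section of \(TG\). The map \(T_pY_1\to TY_0\times G\) is the restriction of the vector bundle surjection \(d\Phi:TY_1\to\Phi^*(TY_0\times TG)\), composed with the projection to \(TY_0\times G\). Locally, \(\Phi\) is a projection \(U\times V\times W\to U\times V\) (with \(U\subset Y_0\), \(V\subset G\)). In these coordinates \(T_pY_1\cong TU\times V\times TW\), and the map becomes the projection onto \(TU\times V\), which is clearly a submersion. I expect this local normal form to settle the step cleanly.

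Third, the fiber. By definition \(\ker(p^T)\) has objects \(TY_0=TX_0\) and arrows \(\{v_\gamma\in\ker dp_1: p_1(\gamma)=e\}\). Since \(X_1=p_1^{-1}(e)\) is a regular level set, \(T_\gamma X_1=\ker(dp_1)_\gamma\) for \(\gamma\in X_1\), so these arrows are exactly \(TX_1\). The structure maps are the differentials of those of \(X\), so \(\ker(p^T)=TX\). Finally, the interpretation as the induced action follows as in \S\ref{Subsec_gpaction}: the bibundles are \((p^T_1)^{-1}(a)=T_pY_1|_{P_a}\), which is \(\ker dp_1\) restricted to \(P_a=p_1^{-1}(a)\). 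Since \(P_a\) is a regular level set, this equals \(TP_a\), the tangent bibundle of \(P_a\).
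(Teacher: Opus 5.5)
Your proposal is correct and follows the same route as the paper. You deduce the Kan condition for $p^T$ from $(s_Y,p_1)$ being a surjective submersion, and you identify the fiber via $T_\gamma X_1=\ker(dp_1)_\gamma$ on the regular level set $X_1=p_1^{-1}(e)$. Your explicit lifting argument and the local normal form $U\times V\times W\to U\times V$ make precise the paper's brief claim that the fiberwise tangent maps of $s_Y|_{p_1^{-1}(g)}$ assemble into a surjective submersion $T_pY_1\to TY_0\times G$.
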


\begin{proof}
For Lie groupoids, the Kan fibration condition over \(BG\) is equivalent to the
condition that
\[
        (s_Y,p_1):Y_1\longrightarrow Y_0\times G
\]
is a surjective submersion.  The corresponding map for \(p^T\) is
\[
        (s_{T_pY},p^T_1):T_pY_1\longrightarrow T_pY_0\times G,
\]
that is,
\[
        (s_{T_pY},p^T_1):
        \ker(dp_1)\longrightarrow TY_0\times G,
        \qquad
        v_\gamma\longmapsto \bigl(ds_Y(v_\gamma),p_1(\gamma)\bigr).
\]

This map is the relative tangent map of the surjective submersion
\[
        (s_Y,p_1):Y_1\to Y_0\times G
\]
with respect to the projection to \(G\).  Equivalently, for each \(g\in G\), the
restriction
\[
        s_Y:p_1^{-1}(g)\longrightarrow Y_0
\]
is a surjective submersion, and
\[
        (s_{T_pY},p^T_1)
\]
is obtained by taking the tangent maps of these submersions fiberwise over
\(g\).  Hence
\[
        (s_{T_pY},p^T_1):T_pY_1\to TY_0\times G
\]
is again a surjective submersion.  Therefore \(p^T:T_pY\to BG\) is a Kan
fibration.

It remains to identify its fiber.  The object space of the fiber is
\[
        T_pY_0=TY_0=TX_0.
\]
The arrow space of the fiber is
\[
        (p^T_1)^{-1}(e)
        =
        \{v_\gamma\in \ker(dp_1)\mid p_1(\gamma)=e\}.
\]
Since
\[
        X_1=p_1^{-1}(e)
\]
and \(p_1\) is a submersion, we have
\[
        T_\gamma X_1=\ker(dp_1)_\gamma
        \qquad
        \text{for }\gamma\in X_1.
\]
Therefore
\[
        (p^T_1)^{-1}(e)=TX_1.
\]
The source, target, unit, inverse, and multiplication maps are precisely the
differentials of the corresponding structure maps of \(X\).  Hence
\[
        \ker(p^T)=(TX_1\rightrightarrows TX_0)=TX.
\]
This proves the claim.
\end{proof}

If \(X\) is a proper \'{e}tale Lie groupoid, then \(TX\) is canonically the
tangent orbibundle of the orbifold represented by \(X\).  Since \(X\) is
\'{e}tale, the tangent groupoid \(TX=(TX_1\rightrightarrows TX_0)\) may also be
identified with the vector-bundle action groupoid
\[
        TX_0\rtimes X.
\]
Thus the Kan fibration
\[
        p^T:T_pY\to BG
\]
is the Kan-fibration model for the weak \(G\)-action on the tangent bundle of
the orbifold \(X\).

The same construction applies to tensor bundles.  Applying tensor operations to
the relative tangent bundle gives weak \(G\)-actions on
\[
        T^*X,\qquad
        \Lambda^kT^*X,\qquad
        \operatorname{Sym}^kT^*X,\qquad
        TX\otimes T^*X,
\]
and more generally on any tensor bundle functorially constructed from \(TX\)
and \(T^*X\).

 Suppose that a vector bundle on the quotient is given at the
Kan-fibration level by a vector bundle
\[
        \Pi:\mathcal E\longrightarrow Y
\]
whose restriction to the fiber is a vector bundle
\[
        E\longrightarrow X .
\]
Equivalently, one has a pullback square
\[
\begin{tikzcd}
        E \arrow[r] \arrow[d]
        &
        \mathcal E \arrow[d,"\Pi"]
        \\
        X \arrow[r,hook,"i"]
        &
        Y .
\end{tikzcd}
\]
Together with the map to \(BG\), this sits in a diagram
\begin{equation}\label{E_equi_VB}
\begin{tikzcd}
E \arrow[r,hook,"j_E"] \arrow[d,"\pi_E"']
&
\mathcal E \arrow[r,"p_{\mathcal E}"] \arrow[d,"\Pi"]
&
BG \arrow[d,equal]
\\
X \arrow[r,hook,"i"']
&
Y \arrow[r,"p"']
&
BG .
\end{tikzcd}
\end{equation}

A \(G\)-invariant
section of \(E\to X\) is a section
\(
        s:X\to E
\)
which extends to a section
\(
        \widetilde s:Y\to \mathcal E
\)
of \(\Pi:\mathcal E\to Y\) that compatible with the above diagram. 
We call such a section \(s\) an equivariant tensor field on \(X\).  Namely, we make the following definition.
\begin{defn}
A $G$ invariant tensor field on \(X\) is a section of the tensor bundle
\(E\to X\) which extends to a section of the corresponding bundle
\(\mathcal E\to Y\) over the Kan fibration \(Y\to BG\).
\end{defn}
For tensor bundles \(E=T^{r,s}X\), \(\Lambda^kT^*X\), \(\mathrm{Sym}^kT^*X\), and so
on, this gives the corresponding notions of $G$-invariant vector fields,
 differential forms,  metrics, almost complex
structures, etc.

The diagram \eqref{E_equi_VB} can be used to define $G$-equivariant vector bundle $E$ over $X$ and $G$-invariant sections.
\def \Vect{\mathrm{Vect}}

\begin{prop}[Existence of a \(G\)-invariant orbifold metric]\label{P_inv_metric}
Given a Kan fibration \eqref{E_KF}
with \(X\) being a proper \'{e}tale Lie groupoid, hence an
orbifold groupoid.  Assume also that \(G\) is compact.
Then \(X\) admits a \(G\)-invariant Riemannian metric. 
\end{prop}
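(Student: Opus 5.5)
Unwinding the definition, a \(G\)-invariant metric on \(X\) is a fibrewise positive definite section \(\mu\) of \(\operatorname{Sym}^2T^*X_0\) that is invariant under the arrows of \(Y\), not only those of \(X\). Concretely, for every arrow \(\gamma:x\to y\) of \(Y\) we need a canonical linear isomorphism \(d\gamma:T_xX_0\to T_yX_0\), and we need \(\mu_y(d\gamma\,\cdot,d\gamma\,\cdot)=\mu_x\). So I plan to produce an action of the Lie groupoid \(Y\) on the vector bundle \(TX_0=TY_0\), check that \(Y\) is proper, and then average an arbitrary metric over \(Y\) with a Haar system and cutoff function. This imitates the standard construction of invariant metrics on proper groupoids.

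\textbf{Step 1: \(Y\) acts on \(TY_0\).} By the preceding proposition, \(T_pY=(\ker dp_1\rightrightarrows TY_0)\) is a groupoid over \(BG\) with fibre \(TX\). Since \(X\) is étale, \(\dim X_1=\dim X_0\). Moreover \((s_Y,p_1)\) is a submersion, and its fibre over \((x,a)\) is a torsor for the discrete arrows of \(X\), so it is a local diffeomorphism \(Y_1\to Y_0\times G\). Hence \(ds_Y\) restricted to \(\ker(dp_1)_\gamma\) is an isomorphism onto \(T_{s\gamma}Y_0\), and the same holds for \(dt_Y\). Define
\[
d\gamma:=dt_Y\circ\bigl(ds_Y|_{\ker(dp_1)_\gamma}\bigr)^{-1}:T_{s\gamma}X_0\to T_{t\gamma}X_0 .
\]
Multiplicativity of \(T_pY\) gives \(d(\gamma\eta)=d\gamma\circ d\eta\), and \(d\) is smooth in \(\gamma\). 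Thus \(TY_0\) becomes a \(Y\)-vector bundle (a representation of \(Y\)), and on \(X_1\subset Y_1\) it restricts to the usual action of \(X\) on \(TX_0\).

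\textbf{Step 2: \(Y\) is proper.} We must show \((s_Y,t_Y):Y_1\to Y_0\times Y_0\) is proper. Take a sequence \(\gamma_n\) with \((s\gamma_n,t\gamma_n)\) in a compact set. Since \(G\) is compact, we may pass to a subsequence with \(p_1(\gamma_n)\to a\). Kan surjectivity lets us choose an arrow \(\beta_n\) with \(s\beta_n=s\gamma_n\) and \(p_1(\beta_n)=p_1(\gamma_n)^{-1}\), and the submersion property lets us choose \(\beta_n\) locally continuously so that it converges. Then \(\gamma_n\beta_n^{-1}\in X_1\) has endpoints in a compact set. Properness of \(X\) gives a convergent subsequence of \(\gamma_n\beta_n^{-1}\), hence of \(\gamma_n\). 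The local-section choice is the only delicate point here. An alternative route is through the theorem above: \(Y\simeq Z\rtimes BG\), and \(Z\rtimes BG\) is proper because \(Z\) is Morita equivalent to the proper groupoid \(X\) and \(G\) is compact.

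\textbf{Step 3: averaging.} I expect this to be the main obstacle, because \(Y\) is not étale and has positive-dimensional fibres \(\cong G\times(\text{discrete})\). Choose any metric \(\mu^0\) on \(X_0\). Choose a smooth left Haar system \(\{\lambda^y\}\) on the \(t\)-fibres of \(Y\); one can be built from Haar measure on \(G\) transported along the local diffeomorphisms \(t^{-1}(y)\to G\), which gives counting measure on the discrete part. By properness (Step 2) there is a cutoff function \(c\ge0\) on \(Y_0\) such that \(\int_{t^{-1}(y)}c(s\gamma)\,d\lambda^y(\gamma)=1\) and \(t\) is proper on \(\operatorname{supp}(c\circ s)\). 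Define
\[
\mu_y(v,w):=\int_{t^{-1}(y)} c(s\gamma)\,\mu^0_{s\gamma}\bigl(d\gamma^{-1}v,d\gamma^{-1}w\bigr)\,d\lambda^y(\gamma).
\]
This is smooth, symmetric and positive definite. Left invariance of the Haar system and the identity \(d(\eta\gamma)=d\eta\circ d\gamma\) give \(\mu_{t\eta}(d\eta\,\cdot,d\eta\,\cdot)=\mu_{s\eta}\) for every \(\eta\in Y_1\). Restricting to \(\eta\in X_1\) shows that \(\mu\) is basic for \(X\), so \(\mu\) is an orbifold metric. Invariance under all of \(Y_1\) then shows, under the section description of \eqref{E_equi_VB}, that \(\mu\) extends to a \(Y\)-invariant section of \(\operatorname{Sym}^2T^*\) over \(Y\). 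This is exactly the required \(G\)-invariant metric.
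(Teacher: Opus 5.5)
Your proof is correct, but it is organized differently from the paper's. The paper averages in two stages. It starts from an orbifold metric \(h\) on \(X\), whose existence for proper \'etale groupoids it takes as known. It transports \(h\) along an arbitrary arrow \(\gamma:x\to y\) of \(P_g=p_1^{-1}(g)\), using \(ds_Y\circ dt_Y^{-1}\) on \(\ker(dp_1)_\gamma\), to get \(g^*h\); this is well defined only because \(h\) is already \(X\)-invariant. It then integrates \(g^*h\) over \(G\) against normalized Haar measure, and declares the \(Y\)-invariance of \(\overline h\) routine.

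You instead average once over all of \(Y\), starting from an arbitrary metric on \(X_0\). With your Haar system (Haar measure on \(G\) pulled back along the local diffeomorphism \((t_Y,p_1)\), i.e.\ counting measure in the discrete \(X\)-directions), your single average is really the paper's two averages fused: an orbifold average over the \(X\)-arrows inside each \(P_g\), followed by Haar integration in \(g\).

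The paper's route needs only integration over the compact group \(G\) plus the classical orbifold metric. However, it leaves to the reader two things: the smooth dependence of \(g^*h\) on \((g,x)\), which comes from local sections of \((s_Y,p_1)\), and the invariance check, which uses \(P_a\otimes_X P_b\cong P_{ab}\) and bi-invariance of Haar measure. Your route costs a properness lemma and the Haar-system and cutoff machinery on a non-\'etale groupoid. In exchange it is self-contained, invariance is immediate from left invariance of \(\lambda\), and it works verbatim for any representation of \(Y\). For example, it supplies the \(G\)-invariant fibre metric on \(\mathcal E\) that the paper later simply assumes in the Thom form construction. Your Step~2 also proves that \(Y\) is proper, which the paper uses without proof in the fixed-subgroupoid theorem.

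One slip in Step~2: for \(\gamma_n\beta_n^{-1}\) to lie in \(X_1\) you need \(p_1(\beta_n)=p_1(\gamma_n)\), not \(p_1(\gamma_n)^{-1}\); as written, its label is \(p_1(\gamma_n)^2\). Take \(\beta_n:=\sigma\bigl(s_Y(\gamma_n),p_1(\gamma_n)\bigr)\) for a local section \(\sigma\) of \((s_Y,p_1)\) near the limit point; with that choice the argument goes through unchanged.
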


\begin{proof} Consider $E=\mathrm{Sym}^2T^*X$ is a tensor vector bundle over $X$. Then we have 
\[
ds_Y: (E\rtimes BG)_1\to E_0,\;\;\;
dt_Y: (E\rtimes BG)_1\to E_0.
\]
They are bundle maps that covers 
\[
s_Y: Y_1\to X_0,\;\;\;
t_Y:Y_1\to X_0.
\]
In particular, $ds_Y$ (and $dt_Y$) are fiberwise isomorphisms (cf. \eqref{E_ds_etale} and  \eqref{E_dt_etale}). 

Since \(X\) is a proper \'{e}tale groupoid, it admits an orbifold Riemannian
metric.  Choose such a metric
\[
        h\in \Gamma(\mathrm{Sym}^2T^*X).
\]
For each \(g\in G\), define a new tensor field \(g^*h\) on \(X_0\) by $ds_Y\circ dt_Y^{-1}$.
Be precise, given \(x\in X_0\), choose an arrow
\[
        \gamma:x\to y
\]
in \(Y_g\).  Define
\[
        (g^*h)(x)
        =
        ds_{Y,\gamma}\circ dt^{-1}_{Y,\gamma} (h(y)),
\]
this is independent of the choice of \(\gamma\)  since $h$ is $X_1$ invariant. 
\(g^*h\) is another orbifold Riemannian metric on \(X\).

Let \(dg\) be the normalized Haar measure on the compact Lie group \(G\).
Define
\[
        \overline h
        :=
        \int_G g^*h\,dg .
\]
Explicitly,
\[
        \overline h_x(v,w)
        =
        \int_G (g^*h)_x(v,w)\,dg .
\]
Since \(G\) is compact, this integral defines a metric.  

It remains to prove that \(\overline h\) is invariant. This is to extend $\overline h$ to a section on $Y$. The construction is routine, we skip the details.
\end{proof}

We explain the infinitesimal action of a $G$ action given by the Kan fibration \eqref{E_KF}. This expects to be a map from $\mathfrak g$ to $\Vect(X)$ and is $G$ equivariant. 

We  interprete an infinitesimal action as a section of the bundle 
\[
\pi:\mathfrak g\times TX\to \mathfrak g\times X
\]
and the section is $G$ equivariant. The bundle being $G$ equivariant, we associate it with a diagram of Kan fibrations 
\begin{equation}
\begin{tikzcd}
        TX\times \mathfrak g
            \arrow[r,hook]
            \arrow[d]
        &
T_pY\times_{BG}        \mathfrak g_{BG}
            \arrow[r]
            \arrow[d]
        &
        BG
            \arrow[d,equal]
        \\
X\times        \mathfrak g
            \arrow[r,hook]
        &
Y\times_{BG}        \mathfrak g_{BG}
            \arrow[r]
        &
        BG .
\end{tikzcd}
\end{equation}
Here
the adjoint action of $G$ on its Lie algebra $\mathfrak g$ yields the vector bundle  
\[
        \mathfrak g_{BG}:=\mathfrak g\rtimes G\longrightarrow BG,
\]
which is also a Kan fibration. The infinitesimal action is given by a section 
\[
\rho^G: Y\times_{BG}\mathfrak g_{BG}\to T_pY\times_{BG}\mathfrak g_{BG}
\]
and its restriction $\rho$ on $\mathfrak g\times X$.
We give the construction of $\rho^G$.

Since \(p:Y\to BG\) is a Kan fibration, the map
\[
        (s_Y,p_1):Y_1\longrightarrow Y_0\times G
\]
is a surjective submersion.  Since \(X=p_1^{-1}(e)\) is \'{e}tale, one has
\[
        \dim Y_1=\dim Y_0+\dim G .
\]
Hence
\[
        (s_Y,p_1):Y_1\to Y_0\times G
\]
is a local diffeomorphism.  Consequently, for every
\(\gamma\in Y_1\), the differential induces a linear isomorphism
\begin{equation}\label{E_ds_etale}
        ds_Y:
        \ker(dp_1)_\gamma
        \overset{\cong}{\longrightarrow}
        T_{s_Y(\gamma)}Y_0 .
\end{equation}
Similarly, 
\begin{equation}\label{E_dt_etale}
        dt_Y:
        \ker(dp_1)_\gamma
        \overset{\cong}{\longrightarrow}
        T_{t_Y(\gamma)}Y_0 
\end{equation}
is also a linear isomorphism.
This is the basic fact used below.

\medskip

First we define the object part of the infinitesimal action.  Locally, choose
the inverse branch of
\[
        (s_Y,p_1):Y_1\to Y_0\times G
\]
through the identity arrows.  Thus, near \((y,e)\), we have a smooth map
\[
        \sigma:U\times V\longrightarrow Y_1
\]
such that
\[
        s_Y(\sigma(y,g))=y,
        \qquad
        p_1(\sigma(y,g))=g,
        \qquad
        \sigma(y,e)=1_y .
\]
For \(\xi\in\mathfrak g\), define
\[
        \rho_0(y,\xi)
        :=
        \left.\frac{d}{dt}\right|_{t=0}
        t_Y\bigl(\sigma(y,\exp(t\xi))\bigr)
        \in T_yY_0=T_yX_0 .
\]
Since the inverse branch is uniquely determined near the identity arrow, these
local formulas define a well-defined smooth map
\[
        \rho_0:X_0\times\mathfrak g\longrightarrow TX_0,
        \qquad
        (x,\xi)\longmapsto \rho_0(x,\xi).
\]
It is linear in the \(\mathfrak g\)-variable.

Because \(X\) is \'{e}tale, the source map
\[
        s_X:X_1\to X_0
\]
is a local diffeomorphism.  Hence, for every arrow
\[
        \alpha:x\to y
\]
in \(X\), there is a unique vector
\[
        \rho_1(\alpha,\xi)\in T_\alpha X_1
\]
satisfying
\[
        ds_X\bigl(\rho_1(\alpha,\xi)\bigr)
        =
        \rho_0(x,\xi).
\]
The compatibility condition for the infinitesimal action on the orbifold is
\[
        dt_X\bigl(\rho_1(\alpha,\xi)\bigr)
        =
        \rho_0(y,\xi).
\]
This follows from Proposition \ref{P_inf_action}.
Equivalently, \(\rho_0(\xi)\) is an \(X\)-invariant vector field.  Thus
\(\rho\) may be regarded as a section
of the vector bundle
\[
       TX\times \mathfrak g\longrightarrow X\times \mathfrak g,
\]
with
\[
        (x,\xi)
        \mapsto
        \bigl(x,\xi,\rho_0(x,\xi)\bigr),\;\;\;
(\alpha,\xi)
        \mapsto
        \bigl(\alpha,\xi,\rho_1(\alpha,\xi)\bigr).
\]

The infinitesimal action condition is that
\[
        \xi\longmapsto \hat\rho(\xi),
\]
where 
\[
\hat\rho_0(x)=\rho_0(x,\xi),\;\;\;
\hat\rho_1(\alpha)=\rho_1(\alpha,\xi).
\]
This is a Lie algebra homomorphism:
\[
        [\hat\rho(\xi),\hat\rho(\eta)]
        =
        \hat\rho([\xi,\eta]).
\]

\medskip

Now we describe the \(G\)-equivariant extension of this section at the
Kan-fibration level.

An arrow \def \Ad{\mathrm{Ad}}
\[
        (\gamma,\xi)\in (Y\times_{BG}\mathfrak g_{BG})_1
\]
with
\[
        \gamma:x\to y,
        \qquad
        p_1(\gamma)=g,
\]
goes from
\[
        (x,\xi)
\;\;\;\mbox{ 
to }\;\;\;
        (y,\Ad_g\xi).
\]
We now define $\rho^G$.
On objects, \(
        \rho^G_0=\rho_0.
\)
On arrows,  define
\[
        \rho^G_1(\gamma,\xi)
        \in
        \ker(dp_1)_\gamma
\]
to be the unique vector satisfying
\[
        ds_Y\bigl(\rho^G_1(\gamma,\xi)\bigr)
        =
        \rho_0(x,\xi).
\]
Equivalently,
\[
        \rho^G_1(\gamma,\xi)
        =
        \left(ds_Y|_{\ker(dp_1)_\gamma}\right)^{-1}
        \bigl(\rho_0(s_Y(\gamma),\xi)\bigr).
\]
This is well-defined because
\[
        ds_Y:\ker(dp_1)_\gamma
        \overset{\cong}{\longrightarrow}
        T_xY_0
\]
is a linear isomorphism.

The \(G\)-equivariance of the infinitesimal action is the identity
\begin{equation}\label{E_inf_action}
        dt_Y\bigl(\rho^G_1(\gamma,\xi)\bigr)
        =
        \rho_0(y,\Ad_g\xi)
\end{equation}
which is proved in Proposition \ref{P_inf_action}.
Thus the arrow
\[
        \rho^G_1(\gamma,\xi)\in T_pY_1
\]
connects the vector
\[
        \rho_0(x,\xi)\in T_xY_0
\]
to the vector
\[
        \rho_0(y,\Ad_g\xi)\in T_yY_0 .
\]

\begin{prop}[Equivariance of the infinitesimal action]
\label{P_inf_action} The property given by \eqref{E_inf_action} holds.
\end{prop}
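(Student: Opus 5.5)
We will prove \eqref{E_inf_action} by a local computation with the inverse branches of the local diffeomorphism $(s_Y,p_1):Y_1\to Y_0\times G$, combined with conjugation by the arrow $\gamma$ in $Y$. Fix $\gamma:x\to y$ with $p_1(\gamma)=g$, and $\xi\in\mathfrak g$.

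\emph{Step 1: a curve representing $\rho^G_1(\gamma,\xi)$.} Let $\sigma_x$ be the local inverse branch through $1_x$, as in the definition of $\rho_0$, and set
\[
\beta(t):=\sigma_x(x,\exp(t\xi)),\qquad \delta(t):=\beta(t)^{-1}.
\]
Then $\delta(t)$ is an arrow $t_Y(\beta(t))\to x$ with $p_1(\delta(t))=\exp(-t\xi)$. Consider the curve in $Y_1$
\[
c(t):=\delta(t)\,\gamma\,\beta'(t).
\]
Here $\beta'(t)$ is the arrow $\sigma_y(y,\exp(t\,\Ad_g\xi))$ out of $y$, composed in the order dictated by the convention of $Y$. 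Then
\[
p_1(c(t))=\exp(-t\xi)\,g\,\exp(t\,\Ad_g\xi).
\]
In whichever multiplication convention the paper uses, the two one-parameter factors must be ordered so that this product is identically $g$, via $g\exp(t\xi)g^{-1}=\exp(t\Ad_g\xi)$. So first check the order of composition, and choose $\beta'$ on the side that makes $p_1(c(t))\equiv g$. Then $\dot c(0)\in\ker(dp_1)_\gamma$.

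\emph{Step 2: identify the endpoints.} Compute $ds_Y(\dot c(0))$ and $dt_Y(\dot c(0))$. The source and target of $c(t)$ are $t_Y(\beta(t))$ and $t_Y(\beta'(t))$, up to the orientation convention. Their derivatives at $t=0$ are, by definition, $\rho_0(x,\xi)$ and $\rho_0(y,\Ad_g\xi)$. By uniqueness in \eqref{E_ds_etale}, $\dot c(0)=\rho^G_1(\gamma,\xi)$. Hence $dt_Y(\rho^G_1(\gamma,\xi))=\rho_0(y,\Ad_g\xi)$, which is \eqref{E_inf_action}. Specialising to $g=e$ gives the compatibility $dt_X(\rho_1(\alpha,\xi))=\rho_0(y,\xi)$ used earlier.

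\emph{Step 3: well-definedness.} We should also check that $\rho_0$ does not depend on the chosen branch. This holds because the branch through $1_x$ is unique near the identity, by the local diffeomorphism property.

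\emph{Main obstacle.} The main obstacle is the bookkeeping of the composition convention. The paper writes $(\gamma,h)(\eta,a^{-1}h)=(\gamma\eta,h)$ for $\gamma:x\to y$ and $\eta:y\to z$, which is diagrammatic order. We must verify that the sign of $\xi$ and the placement of $\Ad_g$ versus $\Ad_{g^{-1}}$ come out as in \eqref{E_inf_action}; with the opposite convention one gets $\Ad_{g^{-1}}$, and the statement must be read accordingly. A secondary point is smoothness of $c(t)$, which needs composability of the arrows at each $t$. This follows because the targets move along smooth curves through $x$ and $y$, and multiplication is smooth on $Y_1\times_{Y_0}Y_1$.
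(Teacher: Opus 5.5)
Your route is the paper's: there one sets $\gamma_t:=b_t\circ\gamma\circ a_t^{-1}$, where $a_t$ and $b_t$ are the local arrows out of $x$ and $y$ with labels $h_t=\exp(t\xi)$ and $gh_tg^{-1}$. One checks that $p_1(\gamma_t)\equiv g$ and then concludes, as in your Step 2, via the uniqueness in \eqref{E_ds_etale}. However, the one computation that carries the content is left uncommitted in your Step 1, and as displayed it fails. With $c(t)=\delta(t)\gamma\beta'(t)$ and $p_1$ multiplicative in the written order,
$p_1(c(t))=\exp(-t\xi)\,g\,\exp(t\,\mathrm{Ad}_g\xi)=\exp(-t\xi)\,g^2\exp(t\xi)\,g^{-1}$,
which in general is not constant when $G$ is non-abelian. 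There is also no ``side'' to choose for $\beta'(t)$: its position in the composite is forced by sources and targets. What the convention decides is which label $\beta'(t)$ must carry.

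The paper's proof composes in functional order, so $p_1(b_t\circ\gamma\circ a_t^{-1})=(gh_tg^{-1})\,g\,h_t^{-1}=g$. This is what produces $\mathrm{Ad}_g$ in \eqref{E_inf_action}, and it is consistent with the paper's description of arrows $(x,\xi)\to(y,\mathrm{Ad}_g\xi)$ in $Y\times_{BG}\mathfrak g_{BG}$.

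The $Z$-construction and the inertia section use the diagrammatic convention, with $p_1(\gamma\eta)=p_1(\gamma)p_1(\eta)$ for $\gamma:x\to y$, $\eta:y\to z$. In that convention the label of $\beta'(t)$ must be $g^{-1}\exp(t\xi)g=\exp(t\,\mathrm{Ad}_{g^{-1}}\xi)$, and the conclusion becomes $\rho_0(y,\mathrm{Ad}_{g^{-1}}\xi)$. So your closing caveat is right that the answer depends on the convention. But it is the diagrammatic convention you attribute to the paper, not the functional one, that yields $\mathrm{Ad}_{g^{-1}}$. Commit to $c(t)=\beta'(t)\circ\gamma\circ\delta(t)$ in functional order, and your argument coincides with the paper's.
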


\begin{proof}
Let
\(
        h_t:=\exp(t\xi).
\)
and
\(
        g h_t g^{-1}
        =
        \exp(t\Ad_g\xi).
\)
By the definition of \(\rho_0\) at \(y\), there is a unique local path of arrows
\[
        b_t:y\to y_t
\]
such that
\[
        b_0=1_y,\qquad
        s_Y(b_t)=y,\qquad
        p_1(b_t)=g h_t g^{-1}.
\]
Thus
\[
        \rho_0(y,\Ad_g\xi)
        =
        \left.\frac{d}{dt}\right|_{t=0}t_Y(b_t)
        =
        \left.\frac{d}{dt}\right|_{t=0}y_t .
\]

Consider the path of arrows
\[
        \gamma_t:=b_t\circ \gamma\circ a_t^{-1}.
\]
This is an arrow
\[
        \gamma_t:x_t\to y_t .
\]
Its \(G\)-label is constant:
\[
\begin{aligned}
        p_1(\gamma_t)
        &=
        p_1(b_t)\,p_1(\gamma)\,p_1(a_t)^{-1}        \\
        &=
        (g h_t g^{-1})\,g\,h_t^{-1}                 \\
        &=
        g .
\end{aligned}
\]
Hence
\[
        \dot\gamma_0
        :=
        \left.\frac{d}{dt}\right|_{t=0}\gamma_t
        \in
        \ker(dp_1)_\gamma .
\]

Moreover,
\[
        ds_Y(\dot\gamma_0)
        =
        \left.\frac{d}{dt}\right|_{t=0}s_Y(\gamma_t)
        =
        \left.\frac{d}{dt}\right|_{t=0}x_t
        =
        \rho_0(x,\xi).
\]
By the uniqueness of the vector in \(\ker(dp_1)_\gamma\) with prescribed
source derivative, we have
\[
        \rho^G_1(\gamma,\xi)=\dot\gamma_0.
\]
Taking target derivatives gives
\[
\begin{aligned}
        dt_Y\bigl(\rho^G_1(\gamma,\xi)\bigr)
        &=
        dt_Y(\dot\gamma_0)                                      \\
        &=
        \left.\frac{d}{dt}\right|_{t=0}t_Y(\gamma_t)              \\
        &=
        \left.\frac{d}{dt}\right|_{t=0}y_t                        \\
        &=
        \rho_0(y,\Ad_g\xi).
\end{aligned}
\]
This proves the desired equivariance identity.
\end{proof}

We already define a section 
$\rho: \mathfrak g\times X\to \mathfrak g\times TX$, or equivalently a bundle map $\mathfrak g\times X\to TX$ still denoted by $\rho$. For each $\xi\in \mathfrak g$, define the vector field 
\begin{equation}\label{E_inf_vectorfield}
\xi^\sharp: X\to \{\xi\}\times X\xrightarrow{\rho} TX.
\end{equation}

\subsection{Equivariant cohomolgy on orbifolds}

Let
\[
X\hookrightarrow Y\xrightarrow{p}BG
\]
be a Kan fibration where X is an orbifold groupoid, namely a proper \'{e}tale Lie groupoid. The main result is the following.

\begin{theorem}
$\Omega^\bullet_\bas(X)$ is a $G^\ast$ algebra.
\end{theorem}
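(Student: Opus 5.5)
The plan is to build the $G^\ast$-structure directly on $\Omega^\bullet_\bas(X)$ from the infinitesimal action vector fields $\xi^\sharp$ of \eqref{E_inf_vectorfield}, together with the action of $G$ through the bibundles $P_a=p_1^{-1}(a)$. The $G^\ast$-algebra axioms require three pieces of data. First, an action of $G$ by dga automorphisms. Second, contractions $\iota_\xi$ of degree $-1$. Third, Lie derivatives $\mathcal L_\xi$. All of these must satisfy the Cartan relations and the equivariance $a\circ\iota_\xi\circ a^{-1}=\iota_{\Ad_a\xi}$.

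\textbf{Step 1: the $G$-action.} For $a\in G$ and $\alpha\in\Omega^k_\bas(X)$, define $a^*\alpha$ exactly as in the proof of Proposition~\ref{P_inv_metric}. Because $X$ is \'etale, $(s_Y,p_1)$ is a local diffeomorphism, so by \eqref{E_ds_etale} and \eqref{E_dt_etale} each $\gamma\in P_a$ with $\gamma:x\to y$ gives a linear isomorphism $ds_Y\circ dt_Y^{-1}:T^*_yX_0\to T^*_xX_0$, applied on $k$-forms. Set $(a^*\alpha)_x$ to be this map applied to $\alpha_y$.
\begin{itemize}
\item Independence of the choice of $\gamma$ follows because any two choices differ by an arrow of $X$, and $\alpha$ is basic.
\item Basicness of $a^*\alpha$ follows by composing with arrows of $X$ inside $Y$.
\item Locally, $a^*\alpha=(s_Y\circ\sigma_a)^{*}\ldots$ is the pullback of $\alpha$ by the local diffeomorphism $t_Y\circ(s_Y,p_1)^{-1}(\cdot,a)$. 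Hence $a^*$ commutes with $\dd$ and with wedge products.
\item The relation $(ab)^*=b^*a^*$ comes from the multiplication $P_a\otimes_X P_b\cong P_{ab}$.
\end{itemize}
I would also check smoothness in $a$, so that differentiating gives $\mathcal L_\xi$. Equivalently, I would use the local sections $\sigma$ from the definition of $\rho_0$.

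\textbf{Step 2: contraction and Lie derivative.} Define $\iota_\xi\alpha:=\iota_{\xi^\sharp}\alpha$ and $\mathcal L_\xi:=\dd\iota_\xi+\iota_\xi\dd$.
\begin{itemize}
\item The key point is that $\iota_{\xi^\sharp}$ preserves basic forms. Since $X$ is \'etale, $s^*$ and $t^*$ are local diffeomorphism pullbacks. By Proposition~\ref{P_inf_action} with $g=e$, the vector field $\xi^\sharp$ is $X$-invariant: $ds_X(\rho_1)=\rho_0\circ s$ and $dt_X(\rho_1)=\rho_0\circ t$. Hence $s^*\iota_{\xi^\sharp}\alpha=\iota_{\rho_1}s^*\alpha=\iota_{\rho_1}t^*\alpha=t^*\iota_{\xi^\sharp}\alpha$.
\item The relation $[\dd,\iota_\xi]=\mathcal L_\xi$ holds by definition, and $[\dd,\mathcal L_\xi]=0$ follows.
\item Compatibility of the two structures is that $\mathcal L_\xi$ agrees with $-\frac{d}{dt}|_{0}\exp(t\xi)^*$. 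This holds because $\rho_0$ is by definition the derivative of $t_Y\circ\sigma(\cdot,\exp t\xi)$.
\end{itemize}

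\textbf{Step 3: remaining identities.}
\begin{itemize}
\item The relations $[\mathcal L_\xi,\iota_\eta]=\iota_{[\xi,\eta]}$ and $[\mathcal L_\xi,\mathcal L_\eta]=\mathcal L_{[\xi,\eta]}$ reduce to the ordinary manifold identities $[\mathcal L_U,\iota_V]=\iota_{[U,V]}$ on $X_0$. They then use that $\xi\mapsto\xi^\sharp$ is a Lie algebra homomorphism, up to the sign convention for a right versus left action.
\item The identity $\iota_\xi\iota_\eta+\iota_\eta\iota_\xi=0$ is automatic.
\item The equivariance $a^*\iota_{\xi}(a^{-1})^*=\iota_{\Ad\,\xi}$ is exactly identity \eqref{E_inf_action}. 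It says $dt_Y\circ\rho^G_1(\gamma,\xi)=\rho_0(y,\Ad_g\xi)$ while $ds_Y\circ\rho^G_1=\rho_0(x,\xi)$, so the transport $ds_Y\circ dt_Y^{-1}$ along $\gamma$ carries $\xi^\sharp$ to $(\Ad\,\xi)^\sharp$.
\end{itemize}

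\textbf{Main obstacle.} The hardest step is the Lie algebra homomorphism property of $\xi\mapsto\xi^\sharp$. The paper asserts it but does not prove it. My first attempt would avoid this by comparing with $Z$. The \'etale case still has $Z=Y\times_{BG}EG$, and $\pi_0:Z_0=X_0\times G\to X_0$ identifies $\Omega_\bas(X)\cong\Omega_\bas(Z)$. I would check that under $\pi_0^*$ the strict $G^\ast$-structure on $\Omega_\bas(Z)$ corresponds to the one built above. Concretely, the fundamental vector field $\xi_Z$ of the strict action is $Z$-related, modulo orbit directions of $Z$ on which basic forms vanish, to $\xi^\sharp$. This comparison would give all Cartan relations at once, from the earlier Proposition that $\Omega_\bas(Z)$ is a $G^\ast$-algebra. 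The remaining work is the tangent computation showing that $(0,\xi)$ at $(x,h)$ differs from a lift of $\xi^\sharp$ by an element of $\rho_Z(A_Z)$, using the arrows $\sigma(x,\exp t\xi)$.
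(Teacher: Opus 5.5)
Your Steps 1--3 follow the paper's proof, with more of the checking written out. The paper also lets $G$ act through the bibundles $P_a=p_1^{-1}(a)$ and sets $\iota_\xi=\iota_{\xi^\sharp}$, $\mathcal L_\xi=\mathcal L_{\xi^\sharp}$. It takes the Cartan identities from calculus on the orbifold, the bracket relations from the assertion (made before the theorem, without proof) that $\xi\mapsto\xi^\sharp$ is a Lie algebra morphism, and the $\mathrm{Ad}$-equivariance from Proposition~\ref{P_inf_action}. Your independence-of-$\gamma$ check and your argument that $\iota_{\xi^\sharp}$ preserves basic forms fill in steps the paper leaves implicit.

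The real departure is your plan for the homomorphism property, and as written it contains a concrete error: $\pi_0^*$ does not identify $\Omega^\bullet_\bas(X)$ with $\Omega^\bullet_\bas(Z)$. Since $\pi_0\circ s_Z=s_Y\circ\pi_1$ and $\pi_0\circ t_Z=t_Y\circ\pi_1$, the form $\pi_0^*\alpha$ is $Z$-basic exactly when $s_Y^*\alpha=t_Y^*\alpha$, i.e.\ when $\alpha$ is $Y$-basic. That is Proposition~\ref{P_ZandY}, where $\pi_0^*$ only reaches the $G$-basic part. Also $d\pi_0(\xi_Z)=0$, so $\pi_0$ cannot relate $\xi_Z$ to $\xi^\sharp$.

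The isomorphism you want is $f_0^*$ with $f_0(x)=(x,e)$, coming from the equivalence $X\to Z$ in \S\ref{Subsec_gpaction}, and the tangent computation belongs on that slice. The curve $c(t)=(\sigma(x,\exp t\xi),\exp t\xi)\in Z_1$ passes through $u_Z(x,e)$, with $s_Z(c(t))=(x,\exp t\xi)$ and $t_Z(c(t))=(t_Y\sigma(x,\exp t\xi),e)$. Evaluating $s_Z^*\tilde\alpha=t_Z^*\tilde\alpha$ on $\dot c(0)$ and on vectors $du_Z(v)$ gives $\iota_{(0,\xi)}\tilde\alpha=\iota_{(\xi^\sharp(x),0)}\tilde\alpha$ at $(x,e)$, hence $f_0^*\iota_{\xi_Z}=\iota_{\xi^\sharp}f_0^*$. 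At a general point $(x,h)$ the same computation yields $(\mathrm{Ad}_h\xi)^\sharp$, not $\xi^\sharp$, so your claim at $(x,h)$ needs this restriction to $h=e$. With the fix, the Cartan relations transport from $\Omega^\bullet_\bas(Z)$ as operator identities on basic forms, which is all a $G^\ast$-algebra requires. The $G$-action must also be matched, or simply transported.

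There is also a cheaper direct route. Because $(s_Y,p_1)$ is a local diffeomorphism, the inverse branch through identity arrows is unique. With composition as in the proof of Proposition~\ref{P_inf_action} and $\phi_g:=t_Y\circ\sigma(\cdot,g)$, this gives $\sigma(\phi_g(x),h)\circ\sigma(x,g)=\sigma(x,hg)$ for $g,h$ near $e$. So $\phi_h\circ\phi_g=\phi_{hg}$ is a local Lie group action on $X_0$, and the standard argument for generators of an action applies.

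This also exposes a sign slip. For exactly the reason you give, $\left.\frac{d}{dt}\right|_{t=0}\exp(t\xi)^*\alpha=\mathcal L_{\xi^\sharp}\alpha$. So $\mathcal L_{\xi^\sharp}$ equals $+\left.\frac{d}{dt}\right|_{t=0}\exp(t\xi)^*$, not $-\left.\frac{d}{dt}\right|_{t=0}\exp(t\xi)^*$ as you claim. With your convention $(ab)^*=b^*a^*$, equivalently $\phi_{ab}=\phi_a\circ\phi_b$, the local action is a left action. Hence $G$ acts on forms by $\rho(a):=(a^{-1})^*$, and $\xi\mapsto\xi^\sharp$ is an anti-homomorphism. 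You therefore need $\iota_\xi:=-\iota_{\xi^\sharp}$ and $\mathcal L_\xi:=-\mathcal L_{\xi^\sharp}$ to get the derivative compatibility and the bracket relations at the same time. The equivariance $\rho(a)\iota_\xi\rho(a)^{-1}=\iota_{\mathrm{Ad}_a\xi}$ is unaffected by this change.
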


\begin{proof} We first explain the $G$ action on $\Omega^\bullet_\bas(X)$.
For \(a\in G\), define
\[
P_a:=p_1^{-1}(a)\subseteq Y_1.
\]
Then \(P_a\) is an \(X\)-\(X\) bibundle, with moment maps
\[
s_Y,t_Y:P_a\to X_{0}.
\]
Hence \(P_a\) induces an automorphism
\[
\rho(a):\Omega^\bullet_{\bas}(X)\to \Omega^\bullet_{\bas}(X).
\]
The multiplication in \(Y_1\) gives canonical bibundle isomorphisms
\[
P_a\otimes_{X_0} P_b\cong P_{ab},
\]
and therefore
\[
\rho(a)\rho(b)=\rho(ab).
\]
For any $\xi\in \mathfrak g$,
define operators on \(\Omega^\bullet_{\bas}(X)\) by
\[
\iota_\xi:=\iota_{\xi^\#},
\qquad
\mathcal L_\xi:=\mathcal L_{\xi^\#}.
\]
The usual Cartan identities, which are available on orbifolds, give
\[
[\dd,\iota_\xi]=\mathcal L_\xi,
\qquad
[\dd,\mathcal L_\xi]=0,
\qquad
[\iota_\xi,\iota_\eta]=0.
\]
Since
\[
\mathfrak g\to\Vect(X),
\qquad
\xi\mapsto \xi^\#
\]
is a Lie algebra morphism, one also has
\[
[\mathcal L_\xi,\iota_\eta]=\iota_{[\xi,\eta]},
\qquad
[\mathcal L_\xi,\mathcal L_\eta]=\mathcal L_{[\xi,\eta]}.
\] 
Moreover, the \(G\)-equivariance of the construction gives
\[
\rho(a)\mathcal L_\xi\rho(a^{-1})
=
\mathcal L_{\Ad_a\xi},
\qquad
\rho(a)\iota_\xi\rho(a^{-1})
=
\iota_{\Ad_a\xi},
\qquad
\rho(a)d\rho(a^{-1})=d.
\]
Therefore
\[
\Omega^\bullet_{\bas}(X)
\]
is a \(G^\ast\)-algebra.
\end{proof}

\begin{definition}
    Let $X$ be a proper \'etale Lie groupoid, i.e, an orbifold. Suppose it admits a weak $G$ action. 
    The \emph{Weil model} of the \(G\)-equivariant cohomology of the orbifold \(X\) is
\[
H_G^\bullet(X)
:=
H^\bullet\left(
\left(\Omega^\bullet_{\bas}(X)\otimes W(\mathfrak g)\right)_{\bas},
d\otimes 1+1\otimes d_W
\right).
\]

The \emph{Cartan model} is
\[
C_G^\bullet(X)
:=
\left(S(\mathfrak g^\vee)\otimes \Omega^\bullet_{\bas}(X)\right)^G.
\]
Its differential is
\[
d_G
=
1\otimes d-\sum_a u^a\otimes \iota_{\xi_a},
\]
where \(\{\xi_a\}\) is a basis of \(\mathfrak g\), and \(\{u^a\}\) is the
corresponding degree \(2\) basis of \(S^1(\mathfrak g^\vee)\). Thus
\[
H_G^\bullet(X)
=
H^\bullet\left(C_G^\bullet(X),d_G\right).
\]

\end{definition}

The Weil and Cartan models are naturally isomorphic:
\[
H^\bullet\left(
\left(\Omega^\bullet_{\bas}(X)\otimes W(\mathfrak g)\right)_{\bas}
\right)
\cong
H^\bullet\left(
\left(S(\mathfrak g^\vee)\otimes \Omega^\bullet_{\bas}(X)\right)^G,
d_G
\right).
\]

\begin{remark}
There are two  ways to put a \(G^\ast\)-algebra structure on
\[
\Omega^\bullet_{\bas}(X)
\]
which can be shown to agree.

The first method replaces \(X\) by a Morita equivalent Lie groupoid
\(
Z
\)
which carries a strict free \(G\)-action. Then
\(
\Omega^\bullet_{\bas}(Z)
\)
has the usual \(G^\ast\)-algebra structure coming from this strict action, and
this structure is transported to
\(
\Omega^\bullet_{\bas}(X)
\)
through the Morita equivalence
\(
X\simeq Z.
\)

The second method is intrinsic to the orbifold. Since \(X\) is \'{e}tale, the Kan
fibration gives an infinitesimal action
\[
\mathfrak g\longrightarrow \operatorname{Vect}(X),
\qquad
\xi\longmapsto \xi^\#.
\]
One then  defines
\[
\iota_\xi:=\iota_{\xi^\#},
\qquad
\mathcal L_\xi:=\mathcal L_{\xi^\#}
\]
directly on
\(
\Omega^\bullet_{\bas}(X)
\) via {\bf the differential geometry} on orbifolds.
\end{remark}

\subsection{The fixed subgroupoid of a weak \(G\)-action}

Let
\[
        p:Y\longrightarrow BG
\]
be a Kan fibration of Lie groupoids with fiber $X$. 
For \(x\in X_0=Y_0\), recall that we have an exact sequence \eqref{E_isotropy} of $\Gamma_x$ and $\widehat{\Gamma}_x$.

\begin{defn}
A point \(x\in X_0\) is called \(G\)-fixed if
\(
        p_1:\widehat\Gamma_x\to G
\)
is surjective. We denote the set of \(G\)-fixed objects by
\(
        X_0^G \).

\end{defn}

Equivalently, \(X_0^G\) is the set of objects whose orbit \([x]\in |X|\) is
fixed by the induced \(G\)-action on the orbit space \(|X|\). Hence, we have the following easy consequence.
\begin{lemma}[Saturation of the fixed-object set]
The subset \(X_0^G\subset X_0=Y_0\) is saturated under both \(X\) and \(Y\).
\end{lemma}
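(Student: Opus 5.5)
The plan is a direct computation with the isotropy groups. The goal is to show that if $x\in X_0^G$ and $\gamma\in Y_1$ is any arrow $\gamma:x\to y$, then $y\in X_0^G$. Saturation under $X$ follows at once, since $X_1=p_1^{-1}(e)\subset Y_1$.

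First I will record how conjugation by an arbitrary arrow of $Y$ acts on isotropy. Let $\gamma:x\to y$ with $p_1(\gamma)=g$. Conjugation
\[
c_\gamma:\widehat\Gamma_x\to\widehat\Gamma_y,\qquad \eta\mapsto \gamma\eta\gamma^{-1},
\]
is a group isomorphism, with inverse $c_{\gamma^{-1}}$. It is well defined because $\gamma\eta\gamma^{-1}$ has source and target $y$. Since $p_1$ is multiplicative (as $p$ is a groupoid homomorphism), we get
\[
p_1(c_\gamma(\eta))=g\,p_1(\eta)\,g^{-1}.
\]
Hence
\[
p_1(\widehat\Gamma_y)=g\,p_1(\widehat\Gamma_x)\,g^{-1}.
\]

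Second, conclude. If $x\in X_0^G$, then $p_1(\widehat\Gamma_x)=G$, so $p_1(\widehat\Gamma_y)=gGg^{-1}=G$, and therefore $y\in X_0^G$. Applying the same argument to $\gamma^{-1}$ gives the converse implication. So $X_0^G$ is a union of $Y$-orbits. Every $X$-orbit is contained in a $Y$-orbit, because $X_1\subset Y_1$ with the same source and target maps. Hence $X_0^G$ is a union of $X$-orbits as well.

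As a consistency check, I will also note the orbit-space description $G_{[x]}\cong p_1(\widehat\Gamma_x)$ stated before \eqref{E_isotropy}. On $|X|$ the element $g$ moves $[x]$ to $[y]$, so stabilizers along a $G$-orbit are conjugate, and fixedness of $[x]$ is a property of the $Y$-orbit. This uses the Kan condition: surjectivity of $(s_Y,p_1)$ guarantees that every $g\in G$ is realised by some arrow out of $x$.

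I do not expect a genuine obstacle. The only point requiring care is the orbit-space route: there one must check that the induced action of $g$ on $|X|$ is well defined, i.e.\ independent of the chosen arrow in $P_g$. That follows from the bibundle isomorphisms $P_a\otimes_X P_b\cong P_{ab}$. The direct isotropy computation above avoids this issue entirely.
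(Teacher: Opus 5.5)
Your proof is correct, but your main argument takes a different route from the paper; your ``consistency check'' paragraph is essentially the paper's own argument. The paper gives no separate proof. Just before the lemma it remarks that $X_0^G$ consists of the objects whose class $[x]\in|X|$ is fixed by the induced $G$-action on $|X|$, i.e.\ $X_0^G$ is the preimage of $|X|^G$ under $X_0\to|X|$. It then treats saturation as an ``easy consequence.'' A preimage of a subset of $|X|$ is automatically $X$-saturated, and an arrow of $Y$ labelled $g$ carries $[x]$ to $g\cdot[x]$, so the $G$-invariance of $|X|^G$ gives $Y$-saturation.

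Your main argument instead works directly from the defining condition $p_1(\widehat\Gamma_x)=G$. Conjugating along $\gamma:x\to y$, you obtain $p_1(\widehat\Gamma_y)=g\,p_1(\widehat\Gamma_x)\,g^{-1}$. This is self-contained and uses only that $p$ is a groupoid homomorphism, not even the Kan condition. It also sidesteps two points the paper asserts without proof: that the action on $|X|$ is well defined, and that $G_{[x]}\cong p_1(\widehat\Gamma_x)$.

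The paper's route, in exchange, is a one-line conceptual explanation. It identifies $X_0^G$ with the fixed locus of the coarse action, which is how the paper motivates the definition in the first place.
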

Set 
\begin{equation}
    X^G=X^G_0\rtimes X
\end{equation}
be the $G$-fixed subgroupoid, we have the Kan fibration 
\[
X^G\hookrightarrow 
X^G_0\rtimes Y\to BG. 
\]

\begin{theorem}[Fixed subgroupoid for a weak compact connected action]
Assume that \(X\) is a proper \'{e}tale Lie groupoid and that
\(G\) is compact and connected. Then 
\(X_0^G\subset X_0\) is an embedded submanifold. Hence
$X^G$ is a full, Lie subgroupoid of $X$.
\end{theorem}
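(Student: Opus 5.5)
The plan is to prove the statement locally, working in orbifold charts. Since the notion of $G$-fixed point depends only on the orbit $[x]\in|X|$, and $X^G_0$ is saturated by the Lemma, it suffices to show that every $x\in X^G_0$ has a neighborhood $U\subset X_0$ with $U\cap X^G_0$ an embedded submanifold of $U$. Fullness of $X^G=X^G_0\rtimes X$ is automatic from the definition as a restriction. Moreover, since $X$ is étale, $s^{-1}(X^G_0)=t^{-1}(X^G_0)$ is then an embedded submanifold of $X_1$ of the same dimension, so $X^G$ is a Lie subgroupoid as soon as $X^G_0$ is embedded.

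\emph{Local model.} Fix $x\in X^G_0$. Since $X$ is proper étale, $\Gamma_x$ is finite, and there is a $\Gamma_x$-invariant neighborhood $U\cong\mathbb R^n$ of $x$ with $X|_U\cong\Gamma_x\ltimes U$. Because $G$ is compact, $Y$ is proper. I would invoke the slice/linearization theorem for proper Lie groupoids, as in Proposition \ref{P_Z_C}, at $x$. Since $(s_Y,p_1)$ is a local diffeomorphism, the $Y$-orbit of $x$ has dimension $\dim G-\dim p_1(\widehat\Gamma_x)$. Because $x$ is fixed, $p_1(\widehat\Gamma_x)=G$, so this dimension is $0$. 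The slice is therefore an open neighborhood of $x$ in $X_0$, and near $x$ the groupoid $Y$ is Morita equivalent to the action groupoid $\widehat\Gamma_x\ltimes V$ for a linear representation $V=T_xX_0$ of the compact group $\widehat\Gamma_x$. Since $\Gamma_x$ is finite and $p_1$ is surjective onto $G$, the sequence \eqref{E_isotropy} shows that $\widehat\Gamma_x$ is an extension of $G$ by a finite group, and its identity component $\widehat\Gamma_x^0$ maps onto $G$, using that $G$ is connected.

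\emph{Fixed locus in the chart.} In the model $\widehat\Gamma_x\ltimes V$, a point $v$ is $G$-fixed exactly when $p_1(\operatorname{Stab}(v))=G$. Connectedness of $G$ makes this equivalent to $\operatorname{Stab}(v)\supseteq\widehat\Gamma_x^0$: a closed subgroup of $\widehat\Gamma_x$ surjecting onto the connected group $G$ has full dimension, so it contains $\widehat\Gamma_x^0$. Hence near $x$, $X^G_0$ corresponds to the linear subspace $V^{\widehat\Gamma_x^0}$, which is an embedded submanifold. I would then transport this back to $X_0$ through the Morita equivalence. Because the slice is open in $X_0$, the identification is a local diffeomorphism near $x$, and embeddedness is preserved.

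\emph{Main obstacle.} I expect the hardest step to be producing the linearization cleanly. A full slice theorem for $Y$ requires properness of $Y$, which I would deduce from properness of $X$ together with compactness of $G$. If that route is awkward, the fallback is to pass to $Z=Y\times_{BG}EG$, which carries a strict free $G$-action. There one can use the strict action of $G$ on $Z_0$ and the usual slice theorem for compact group actions. In that setting $x$ fixed means that the $G$-orbit of $(x,e)$ lies in the $Z$-orbit, and the fixed locus is handled through the infinitesimal action: $X^G_0$ is the common zero set of the vector fields $\xi^\sharp$ from \eqref{E_inf_vectorfield}, a zero set that fully captures the fixed locus because $G$ is connected. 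By averaging as in Proposition \ref{P_inv_metric}, the fixed locus is totally geodesic for a $G$-invariant metric, and therefore embedded.
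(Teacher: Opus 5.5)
Your proposal is correct and follows essentially the same route as the paper. You apply the slice theorem for the proper groupoid $Y$ at a fixed point $x$ to model $Y$ near $x$ by $\widehat\Gamma_x\ltimes U_x$, then use connectedness of $G$ and the dimension count $\dim\widehat\Gamma_y=\dim G$ to identify $X_0^G\cap U_x$ with the fixed set of $\widehat\Gamma_x^0$, which is an embedded submanifold. Your observation that the $Y$-orbit of a fixed point is $0$-dimensional, so that the slice is open in $X_0$, supplies a justification the paper leaves implicit, and the fallback via $Z$ and Killing fields is not needed.
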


\begin{proof}
Let
\(
        x\in X_0^G .
\)
Since \(Y\) is proper, the slice theorem for proper Lie groupoids gives a
sufficiently small neighborhood \(U_x\subset X_0=Y_0\) of \(x\), invariant under
\(\widehat\Gamma_x\), such that the restriction of \(Y\) over \(U_x\) is
modeled by the action groupoid
\(
        \widehat\Gamma_x\ltimes U_x .
\)
Similarly, the restriction of \(X\) over \(U_x\) is modeled by
\(
        \Gamma_x\ltimes U_x ,
\).
Since \(X\) is an orbifold groupoid, \(\Gamma_x\) is finite.

Let
\(
        \widehat\Gamma_x^0
\)
be the identity component of \(\widehat\Gamma_x\). Since $G$ is connected, 
\(
        p_1(\widehat\Gamma_x^0)=G,
\)
and
is a finite covering of compact connected Lie groups.

We claim that
\begin{equation}\label{E_1}
        X_0^G\cap U_x
        =
        U_x^{\widehat\Gamma_x^0}.
\end{equation}
First suppose
\[
        y\in U_x^{\widehat\Gamma_x^0}.
\]
Then
\(
        \widehat\Gamma_x^0\subset \widehat\Gamma_y,
\)
Thus
\(
        y\in X_0^G.
\)
This proves "$\supset$" of 
\eqref{E_1}.

Conversely, suppose
\[
        y\in X_0^G\cap U_x .
\]
Then the composition
\[
        \widehat\Gamma_y
        \hookrightarrow
        \widehat\Gamma_x
        \xrightarrow{p_x}
        G
\]
is surjective.   Since \(G\) is connected, this implies $\widehat\Gamma_y^0\subset \widehat\Gamma_x^0$ and
\[
        p_1(\widehat\Gamma_y^0)=p_1(\widehat\Gamma_x^0)=G.
\]
We have 
\[\dim(\widehat\Gamma_y^0)=
\dim (\widehat\Gamma_x^0)=\dim G.
\]
By the connectedness assumption, 
\[
        \widehat\Gamma_y^0
        =
        \widehat\Gamma_x^0.
\]
In particular,
\[
        \widehat\Gamma_x^0\subset \widehat\Gamma_y.
\]
Therefore \(y\) is fixed by \(\widehat\Gamma_x^0\), so
we prove "$\subset$" of \eqref{E_1}. This verifies 
\eqref{E_1}.

Since \(\widehat\Gamma_x^0\) is a compact Lie group acting smoothly on the slice
\(U_x\), its fixed-point set
\[
        U_x^{\widehat\Gamma_x^0}
\]
is an embedded submanifold of \(U_x\).  Therefore \(X_0^G\) is locally an
embedded submanifold near every point \(x\in X_0^G\).  Hence
       \( X_0^G\)
is an embedded submanifold of $X_0$.
\end{proof}

The connectedness assumption is crucial. The following example shows that $X^G$ is not smooth if $G$ is a finite group.

\begin{example}[Failure for finite discrete \(G\)]
Let
\[
H=\mathbb Z_2
\]
act on \(\mathbb R^2\) by
\[
h\cdot(x,y)=(-x,-y),
\]
and let
\[
X=H\ltimes \mathbb R^2
\]
be the corresponding orbifold groupoid. Let
\[
G=\mathbb Z_2=\{e,\sigma\}
\]
act on \(\mathbb R^2\) by
\[
\sigma(x,y)=(x,-y).
\]
This action commutes with the \(H\)-action, hence gives a strict \(G\)-action on \(X\).

For a point \((x,y)\in \mathbb R^2\), the nontrivial element \(\sigma\in G\) fixes its
\(X\)-orbit if and only if there exists \(h\in H\) such that
\[
h\cdot \sigma(x,y)=(x,y).
\]
If \(h=e\), then
\[
(x,-y)=(x,y),
\]
so \(y=0\). If \(h\neq e\), then
\[
(-x,y)=(x,y),
\]
so \(x=0\). Hence
\[
X^G_0
=
\{(x,y)\in\mathbb R^2\mid x=0\}
\cup
\{(x,y)\in\mathbb R^2\mid y=0\}.
\]
This is the union of the two coordinate axes, which is not a smooth submanifold at the
origin. Therefore \(X^G\) is not a Lie groupoid.
\end{example}

 \def \SN{\mathsf N}

Let $\tau$ be a $G$-invariant metric on $X$ (cf. Proposition \ref{P_inv_metric}).  
  Let $N$ be the normal bundle of $X^G$ in $X$ with respect to the metric $\tau$. It is also $G$-equivariant. 
Be precisely, let
\[
\pi_{N,0}: N_0\to X^{G}_0
\]
be a vector bundle.  It admits both $X$ and $Y$ action, so
$$
N= N_0\rtimes X,\;\;\; 
N\rtimes B G=N_0\rtimes Y.
$$
The later is a bundle over $X^G\rtimes B G=X^{G}_0\rtimes Y$. 
\begin{lemma}
The normal bundle $N$ of $X^G$ in $X$ is $G$ equivariant bundle given by 
the Kan fibration
\[
N\hookrightarrow N_0\rtimes Y\to BG.
\]
\end{lemma}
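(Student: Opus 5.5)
We need to show that the normal bundle $N$ of $X^G$ in $X$ is $G$-equivariant, in the sense of the diagram \eqref{E_equi_VB}. Concretely this means three things: $N_0$ carries an action of the groupoid $Y$ restricted to $X^G_0$, whose restriction to $X$-arrows is the usual orbifold action; $N_0\rtimes Y\to BG$ is a Kan fibration; and its fiber is $N=N_0\rtimes X$.

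\textbf{Step 1: the action of $Y$ on $TX_0|_{X^G_0}$.} Since $X$ is étale, \eqref{E_ds_etale} and \eqref{E_dt_etale} give, for every arrow $\gamma:x\to y$ of $Y$, a linear isomorphism
\[
\gamma_*:=dt_Y|_{\ker(dp_1)_\gamma}\circ\bigl(ds_Y|_{\ker(dp_1)_\gamma}\bigr)^{-1}:T_xX_0\to T_yX_0 .
\]
These maps are exactly the arrows of the Kan fibration $T_pY\to BG$, so they compose and satisfy $(\gamma\eta)_*=\gamma_*\eta_*$. By the saturation lemma, if $x\in X^G_0$ then $y\in X^G_0$.

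\textbf{Step 2: $\gamma_*$ preserves $TX^G_0$.} Near $x$, use the slice chart $\widehat\Gamma_x\ltimes U_x$ from the proof of the fixed-subgroupoid theorem. There $X^G_0\cap U_x=U_x^{\widehat\Gamma_x^0}$, and the arrows of $Y$ act as local diffeomorphisms that preserve this fixed set, because the fixed set is saturated. Hence $\gamma_*$ maps $T_xX^G_0$ onto $T_yX^G_0$. The cleanest way to see this is that $\gamma_*$ is the differential of the local diffeomorphism $t_Y\circ\sigma$, where $\sigma$ is a local section of $s_Y$ inside $p_1^{-1}(g)$, and this local diffeomorphism maps the saturated submanifold $X^G_0$ into itself.

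\textbf{Step 3: $\gamma_*$ is an isometry for the $G$-invariant metric $\tau$.} By Proposition \ref{P_inv_metric}, $\tau$ extends to a section of $\mathrm{Sym}^2T^*$ over $Y$. Unwinding that extension, this says precisely that $\gamma_*^*\tau_y=\tau_x$. An isometry that preserves $TX^G_0$ also preserves its orthogonal complement, so $\gamma_*$ restricts to an isomorphism $N_x\to N_y$.

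This defines the action groupoid $N_0\rtimes Y$, with arrows $Y_1|_{X^G_0}\times_{s}N_0$. The structure maps are $ds_Y$ and $dt_Y$ restricted to the relative tangent bundle.

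\textbf{Step 4: the Kan fibration and its fiber.} Define $N_0\rtimes Y\to BG$ by $(\gamma,v)\mapsto p_1(\gamma)$. We must check that $(s,p_1):(\gamma,v)\mapsto(v,p_1(\gamma))$ is a surjective submersion onto $N_0\times G$. This is the base change along $\pi_{N,0}$ of the map
\[
(s_Y,p_1):Y_1|_{X^G_0}\to X^G_0\times G .
\]
That map is a surjective local diffeomorphism: it is the restriction of the étale-type local diffeomorphism $(s_Y,p_1)$ to the saturated submanifold, and it is surjective because $X^G_0$ is saturated. Base change of a surjective submersion is again a surjective submersion, so the Kan condition holds. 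Finally, the fiber over $e$ consists of the arrows $\gamma\in X_1$ acting by $dg$, which is $N_0\rtimes X=N$.

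\textbf{Main obstacle.} The main obstacle is Step 2: showing that $X^G_0\rtimes Y$ is itself a Lie groupoid, i.e.\ that $Y_1|_{X^G_0}$ is a submanifold on which $(s_Y,p_1)$ is still a local diffeomorphism. I would handle this locally in the slice model $\widehat\Gamma_x\ltimes U_x$. There $Y_1|_{U^{\widehat\Gamma^0_x}}=\widehat\Gamma_x\times U_x^{\widehat\Gamma_x^0}$, and the arrows from the identity component act trivially on the fixed set. As a result, $\gamma_*$ acts on $T X_0^G$ through the finite group $\widehat\Gamma_x/\widehat\Gamma_x^0$, which makes both the invariance in Step 2 and the smoothness in Step 4 transparent.
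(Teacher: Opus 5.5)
Your proof is correct and follows the route the paper indicates. The paper gives no proof of this lemma beyond the preceding remark that $N_0$ carries compatible $X$- and $Y$-actions, so that $N=N_0\rtimes X$ and $N\rtimes BG=N_0\rtimes Y$, and your Steps 1--4 supply exactly the verification it leaves out: the action via $T_pY$, preservation of $TX_0^G$ by saturation, preservation of $N_0$ by $Y$-invariance of $\tau$, and the Kan condition by base change. What you call the ``main obstacle'' is in fact immediate and needs no slice model: $Y_1|_{X_0^G}=(s_Y,p_1)^{-1}(X_0^G\times G)$ is the preimage of an embedded submanifold under the local diffeomorphism $(s_Y,p_1)$, so it is a submanifold on which $(s_Y,p_1)$ restricts to a surjective local diffeomorphism.
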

\subsection{Integral localization formulae}

We first show the existence of $G$-equivariant Thom forms for $G$ equivariant vector bundle over orbifolds.

\begin{prop}[Existence of a \(G\)-equivariant Thom form]
 Let $E$ be a $G$ equivariant oriented vector bundle of rank $r$ over orbifold $X$ given by the diagram \eqref{E_equi_VB}. 
Assume that  \(G\) is compact and the bundle
 \(E\to X\) is oriented of real rank \(r\), and that the orientation
is preserved by the \(G\)-equivariant structure.

Then there exists an equivariant Thom form
\[
        \tau_G(E)\in
        \Omega^r_{G,\bas,\mathrm{cv}}(E)
        :=
        \left(
        S(\mathfrak g^\vee)\otimes
        \Omega^\bullet_{\bas,\mathrm{cv}}(E)
        \right)^G ,
\]
where \(\mathrm{cv}\) means compact vertical support, such that
\[
        d_G\tau_G(E)=0 \;\;\;
\mbox{ 
and }\;\;\;
        (\pi_E)_*\tau_G(E)=1
\]
in
\(
        \Omega^\bullet_{G,\bas}(X).
\)
Moreover, if
\(
        0:X\longrightarrow E
\)
is the zero section, then
\(
        0^*\tau_G(E)=e_G(E),
\)
the equivariant Euler form of \(E\).
\end{prop}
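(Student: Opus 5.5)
Transfer the Mathai--Quillen construction of the equivariant Thom form to the orbifold setting, working entirely on the object manifold $E_0$ of the groupoid $E=E_0\rtimes X$. The point is that every ingredient is built from $Y$-invariant (hence $X$-basic and $G$-equivariant) data on $E_0$, so the resulting forms are automatically basic.

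\textbf{Step 1: invariant metric and connection.} First we choose a fiber metric and a compatible connection on $E\to X$ that are invariant under the Kan fibration $\mathcal E\to Y$. For the metric we argue exactly as in Proposition \ref{P_inv_metric}: start with any $X$-invariant fiber metric (using an $X$-invariant partition of unity, which exists since $X$ is proper), transport it by the maps $ds_Y\circ dt_Y^{-1}$ along arrows in $P_g$, and average with Haar measure over the compact group $G$. The metric connection $\nabla$ is handled the same way. Connections form an affine space, and transport along arrows of $Y$ preserves metric compatibility, so Haar averaging of transported connections yields a $Y$-invariant metric connection. Its curvature $F_\nabla\in\Omega^2_{\bas}(X;\mathfrak{so}(E))$ is then $X$-basic and $G$-invariant.

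\textbf{Step 2: the moment map.} Next we need the equivariant moment $\mu(\xi)=\mathcal L^E_{\xi}-\nabla_{\xi^\#}$, where $\xi^\#$ is the vector field from \eqref{E_inf_vectorfield} and $\mathcal L^E_\xi$ is the infinitesimal action of $\mathfrak g$ on sections of $E$. We obtain $\mathcal L^E_\xi$ by applying the construction of $\rho_0,\rho^G$ (Proposition \ref{P_inf_action}) to the Kan fibration $p_{\mathcal E}:\mathcal E\to BG$, which is again one with étale fiber. This gives a lift of $\xi^\#$ to $E_0$, linear on fibers. The section $\mu(\xi)$ of $\mathfrak{so}(E)$ is $X$-basic, and the identity \eqref{E_inf_action} gives $\mu(\Ad_g\xi)=g\cdot\mu(\xi)$. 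The equivariant curvature is then $F_G(\xi)=F_\nabla+\mu(\xi)$, which lies in $(S(\mathfrak g^\vee)\otimes\Omega^\bullet_{\bas}(X;\mathfrak{so}(E)))^G$.

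\textbf{Step 3: the Mathai--Quillen formula.} On $E_0$ we write the Mathai--Quillen form
\[
\tau_G(E)=(2\pi)^{-r/2}\int^{B}\exp\Bigl(-\tfrac12|v|^2+\tfrac12\langle\psi,F_G\psi\rangle+i\langle\nabla v,\psi\rangle\Bigr),
\]
with Berezin integral $\int^B$ over odd fiber variables $\psi$, using the orientation. Each ingredient is $Y$-invariant, so $\tau_G(E)$ is $X$-basic and $G$-equivariant.

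\textbf{Step 4: closedness, fiber integral, zero section.} Closedness $d_G\tau_G(E)=0$, the normalization $(\pi_E)_*\tau_G(E)=1$, and $0^*\tau_G(E)=\mathrm{Pf}(F_G/2\pi)=e_G(E)$ are local identities on $E_0$. They are proved by the standard Mathai--Quillen computation chartwise, and that computation commutes with pullback along the étale maps $s,t$. Fiber integration $(\pi_E)_*$ on basic forms is defined on $E_0\to X_0$ and preserves basicness because $s,t$ are local diffeomorphisms covered by fiberwise isomorphisms.

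\textbf{Compact vertical support.} As written, $\tau_G(E)$ has Gaussian decay rather than compact vertical support. To fix this we replace $|v|^2$ by $\phi(|v|^2)$ with $\phi$ growing rapidly to infinity, or, more cleanly, pull back along an invariant fiberwise diffeomorphism from the open unit disc bundle onto $E$. This keeps the form invariant because the metric is.

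\textbf{Main obstacle.} The hardest part is Step 2, namely showing that the lifted infinitesimal action on $E$ satisfies the Cartan relations. The required identities are $\iota_\xi F_G$-type relations such as $\nabla\mu(\xi)=-\iota_{\xi^\#}F_\nabla$, which in turn need $\nabla$ to be invariant under the infinitesimal action. Infinitesimal invariance is what Step 1 delivers, because $G$-invariance differentiates to $\mathfrak g$-invariance through the path $\gamma_t$ used in Proposition \ref{P_inf_action}.
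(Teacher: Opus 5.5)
Your argument is correct, but it takes a different route from the paper.

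\textbf{The paper's route.} The paper never writes down a connection on $E$ or a moment map. It passes to the oriented orthonormal frame bundle $P^0\to X_0$, a principal $K=SO(r)$-bundle to which the $Y$-action lifts, commuting with $K$. It pulls back the universal $K$-equivariant Mathai--Quillen form on $V=\mathbb R^r$ (viewed as $(K\times G)$-equivariant, with $G$ acting trivially) to $\mathsf P\times V$. It then pushes this through the Cartan--Weil homomorphism of a $G$-invariant $K$-connection, identifying $K$-basic forms on $\mathsf P\times V$ with forms on $E=\mathsf P\times_K V$. There $d_G\tau_G(E)=0$ is automatic, because every map in the composite is a chain map. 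The normalization and the Euler identity are inherited from the universal form on $V$.

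\textbf{Your route.} You work directly on $E_0$ with a $Y$-invariant metric connection and the equivariant curvature $F_\nabla+\mu$. This is precisely what the Cartan--Weil map produces from the universal form, so the two constructions give the same form. The cost is that the Cartan relations and the equivariant Bianchi identity linking $\nabla\mu(\xi)$ to $\iota_{\xi^\#}F_\nabla$ must be checked by hand. You correctly reduce them to $\mathfrak g$-invariance of $\nabla$. Since the lifted infinitesimal action on $E_0$ is an honest fiberwise-linear Lie algebra action by $X$-invariant vector fields, the usual manifold computation applies verbatim on $X_0$.

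\textbf{What your route buys.} Basicness and $G$-invariance are visible ingredient by ingredient on the object manifold. You avoid constructing the frame-bundle groupoid with its lifted weak $G$-action and the groupoid-level identification of $K$-basic forms, which the paper only sketches. Your Haar averaging of connections through the bibundles $P_g$ also supplies the justification behind the paper's unproved remark that the $K$-connection may be chosen $G$-invariant.

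\textbf{One caveat on support.} Replacing $|v|^2$ by $\phi(|v|^2)$ in the exponent alone would destroy closedness, because the $\nabla v$ and curvature terms would no longer match. Only your second option works: pull back along a $G$-invariant radial diffeomorphism from the open disc bundle onto $E$, then extend by zero. The extension is smooth because of the Gaussian decay. This map commutes with the linear isometric action and fixes the zero section, so closedness, the fiber integral and $0^*\tau_G(E)$ are all preserved.
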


\begin{proof}
We construct \(\Theta_E\) by the frame-bundle construction, following the
Mathai--Quillen and Guillemin--Sternberg construction of equivariant Thom forms~\cite{MathaiQuillen1986,GuilleminSternberg1999}.

Let
\[
        E_0\longrightarrow X_0
\]
be the vector bundle on the object space representing \(E\to X\).  Choose a
\(G\)-invariant fiber metric on \(E\).  This is possible because \(G\) is
compact.

Let
\[
        V=\mathbb R^r,
        \qquad
        K=SO(V).
\]
Let
\[
        P^0:=\operatorname{Fr}^+(E_0)
\]
be the oriented orthonormal frame bundle of \(E_0\).  Then \(P^0\to X_0\) is a
principal \(K\)-bundle and
\[
        E_0\cong P^0\times_K V.
\]

The vector bundle \(\mathcal E\to Y\) is read as an action of the total groupoid
\(Y\) on \(E_0\).  Since the metric and orientation are \(Y\)-invariant, this
action lifts to \(P^0\).  The right \(K\)-action on \(P^0\) commutes with the
\(Y\)-action.  Equivalently, if we write schematically
\[
        Y=X\rtimes BG
\]
for the groupoid representing the weak \(G\)-action, then the \(K\)-action
commutes with the \(X\rtimes BG\)-action on \(P^0\).

Let
\[
        \mathsf P:=P^0\rtimes Z
\]
be the corresponding principal \(K\)-bundle groupoid over \(X\).  Then
\[
        K\backslash \mathsf P\cong X,
\]
and the associated vector bundle is
\[
        E\cong \mathsf P\times_K V.
\]
At the equivariant, or Kan-fibration, level this is expressed by the quotient
presentation
\[
        BK\ltimes(\mathsf P\times V)\rtimes BG
        \longrightarrow
        X\rtimes BG,
\]
where \(K\) acts on \(V\) by the standard representation and \(G\) acts through
the given weak action on \(\mathsf P\).

There is an evident projection morphism
\[
        f:
        BK\ltimes(\mathsf P\times V)\rtimes BG
        \longrightarrow
        V\rtimes B(K\times G),
\]
given on objects by
\[
        (p,v)\longmapsto v.
\]
Here \(K\) acts on \(V\) by the standard representation, and \(G\) acts
trivially on \(V\).

This induces a pullback map on equivariant forms
\[
        f^*:
        \Omega_{K\times G}^\bullet(V)
        \longrightarrow
        \Omega_{K\times G}^\bullet(\mathsf P\times V).
\]

Next choose a \(K\)-connection on the principal \(K\)-bundle
\[
        \mathsf P\longrightarrow Z .
\]
Since the \(G\)-action commutes with the \(K\)-action and \(G\) is compact, we
may choose this connection to be \(G\)-invariant.  The Cartan--Weil homomorphism
associated to this connection gives a chain map
\[
        \Pi:
        \Omega_{K\times G}^\bullet(\mathsf P\times V)
        \longrightarrow
        \Omega_G^\bullet\bigl(\Omega^\bullet(\mathsf P\times V)_{K,\bas}\bigr).
\]
This is the usual Guillemin--Sternberg map~\cite{GuilleminSternberg1999}: it converts \(K\)-equivariant forms
on a free \(K\)-space into \(K\)-basic forms, while retaining the remaining
\(G\)-equivariant structure.

Since the \(K\)-action on \(\mathsf P\times V\) is free, the basic forms are
identified with forms on the quotient:
\[
        \Omega^\bullet(\mathsf P\times V)_{K,\bas}
        \cong
        \Omega^\bullet(\mathsf P\times_K V).
\]
Thus
\[
        \Omega_G^\bullet\bigl(\Omega^\bullet(\mathsf P\times V)_{K,\bas}\bigr)
        \cong
        \Omega_G^\bullet(\mathsf P\times_K V)
        =
        \Omega_G^\bullet(E).
\]

Therefore we have a natural chain map
\[
        \Omega_{K\times G}^\bullet(V)
        \xrightarrow{f^*}
        \Omega_{K\times G}^\bullet(\mathsf P\times V)
        \xrightarrow{\Pi}
        \Omega_G^\bullet\bigl(\Omega^\bullet(\mathsf P\times V)_{K,\bas}\bigr)
        \xrightarrow{\cong}
        \Omega_G^\bullet(E).
\]
Denote this composite by
\[
        C_{\mathsf P}:
        \Omega_{K\times G}^\bullet(V)
        \longrightarrow
        \Omega_G^\bullet(E).
\]

Let
\[
        \Theta_{K\times G}\in \Omega^r_{K\times G,\mathrm{cv}}(V)
\]
be the universal equivariant Thom form of the oriented \(K\)-representation
\(V\).  Since \(G\) acts trivially on \(V\), this is just the usual
\(K\)-equivariant Mathai--Quillen Thom form, regarded as a
\((K\times G)\)-equivariant form.  We use the compactly supported modification
of the Mathai--Quillen form as in Guillemin--Sternberg.  Thus
\[
        d_{K\times G}\Theta_{K\times G}=0,
\]
and
\[
        \int_V \Theta_{K\times G}=1.
\]

Define
\[
        \Theta_E
        :=
        C_{\mathsf P}\bigl(\Theta_{K\times G}\bigr)
        \in
        \Omega^r_{G,\bas,\mathrm{cv}}(E).
\]
Because \(C_{\mathsf P}\) is a chain map, we have
\[
        d_G\Theta_E
        =
        d_G C_{\mathsf P}(\Theta_{K\times G})
        =
        C_{\mathsf P}(d_{K\times G}\Theta_{K\times G})
        =
        0.
\]
The compact vertical support of \(\Theta_E\) follows from the compact vertical
support of \(\Theta_{K\times G}\) on \(V\).  Locally on \(X\), the bundle
\(E\to X\) is identified with an associated bundle
\[
        P^0\times_K V,
\]
and \(\Theta_E\) is obtained from the universal Thom form on \(V\).  Therefore
fiber integration gives
\[
        (\pi_E)_*\Theta_E=1.
\]

Finally, pulling back along the zero section
\[
        0:X\to E
\]
gives the equivariant Euler form:
\[
        0^*\Theta_E=e_G(E).
\]
This is the standard Mathai--Quillen identity, transported through the
Cartan--Weil homomorphism and the quotient identification
\[
        \mathsf P\times_K V\cong E.
\]
Hence \(\Theta_E\) is a \(G\)-equivariant Thom form of \(E\to X\).
\end{proof}

\def \orb{\mathrm{orb}}

\begin{theorem}[Local localization formula on an equivariant vector bundle]
Let \(G=S^1\), and let
\[
        \pi:E\longrightarrow X
\]
be an oriented \(G\)-equivariant orbifold vector bundle over a connected, compact oriented
orbifold \(X\). Let
\[
        i:X\longrightarrow E
\]
be the zero section.

Assume that the fixed-point locus of the total space \(E\) is precisely the
zero section, i.e, 
\(
        E^G=i(X).
\)

Let
\[
        \mu\in \Omega^\bullet_{G,\bas,\mathrm{cv}}(E)
\]
be an equivariantly closed form with compact vertical support.
Then, after inverting the equivariant parameter \(u\), one has 
\[
        \int_E^{\orb}\mu
        =
        \int_X^{\orb}
        \frac{i^*\mu}{e_G(E)}      
\]
as an identity in
\(
        \mathbb R[u,u^{-1}].
\)
\end{theorem}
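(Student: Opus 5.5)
The plan is to reduce the statement to two facts: first, that on the complement of the zero section every equivariantly closed form is equivariantly exact once \(u\) is inverted; second, that \(\mu\) and \(\pi^*(i^*\mu)\wedge\tau_G(E)/\pi^*e_G(E)\) differ by an equivariantly exact form with compact vertical support. Integrating over \(E\) and using the Thom form from the preceding proposition then gives the formula.

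\emph{Step 1: an invariant connection form away from the zero section.} Using Proposition \ref{P_inv_metric}, or rather its bundle analogue with a \(G\)-invariant fiber metric on \(E\), I would take the generating vector field \(\xi^\#\) on \(E\) given by \eqref{E_inf_vectorfield} and set
\[
\theta:=\frac{\langle \xi^\#,\cdot\rangle}{|\xi^\#|^2}\in\Omega^1_{\bas}(E\setminus i(X)).
\]
Since \(E^G=i(X)\) and \(G=S^1\), the vector field \(\xi^\#\) has no zeros on \(E\setminus i(X)\), so this form is defined there. It is basic because the metric and \(\xi^\#\) are \(X\)-invariant. It is \(G\)-invariant and satisfies \(\iota_{\xi}\theta=1\). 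In the Cartan model one then computes \(d_G\theta=d\theta-u\), and this is invertible in \(\Omega_{G,\bas}[u^{-1}]\), because \(d\theta\) is nilpotent. Consequently any \(d_G\)-closed \(\beta\) on \(E\setminus i(X)\) satisfies
\[
\beta=d_G\bigl(\theta\,\beta\,(d_G\theta)^{-1}\bigr),
\]
so \(\beta\) is exact after inverting \(u\).

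\emph{Step 2: comparing \(\mu\) with a Thom-form expression.} Let \(\tau=\tau_G(E)\) be the Thom form from the preceding proposition. Since \(i^*\tau=e_G(E)\), and \(e_G(E)\) is invertible in \(\Omega_{G,\bas}(X)[u^{-1}]\) (its degree-zero part in \(u\) is \(\det\) of the weight matrix, which is nonzero because \(E^G=i(X)\)), the form
\[
\nu:=\mu-\pi^*\!\left(\frac{i^*\mu}{e_G(E)}\right)\tau
\]
is \(d_G\)-closed with compact vertical support, and \(i^*\nu=0\). By the homotopy invariance \(\pi^*i^*\simeq\mathrm{id}\) on \(E\), obtained from the fiberwise radial contraction, \(\nu\) is exact near the zero section. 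To make this global I would choose a \(G\)- and \(X\)-invariant cutoff \(\chi\) that equals \(1\) near \(i(X)\) and has compact vertical support. I would then write \(\nu=d_G\bigl(\chi\,\theta\,\nu(d_G\theta)^{-1}\bigr)+\) a correction supported near \(i(X)\), and handle that correction with the radial homotopy operator, which preserves compact vertical support.

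\emph{Step 3: integrating.} Orbifold integration of basic forms with compact vertical support satisfies Stokes' theorem, \(\int^{\orb}_E d_G\lambda=0\), because \(\int^{\orb}_E\iota_\xi(\cdot)\) vanishes for degree reasons. This follows from partitions of unity on the orbifold charts. Applying it, \(\int^{\orb}_E\nu=0\), and fiber integration gives
\[
\int_E^{\orb}\mu=\int_X^{\orb}\frac{i^*\mu}{e_G(E)}\,(\pi_E)_*\tau=\int_X^{\orb}\frac{i^*\mu}{e_G(E)}.
\]

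I expect the main obstacle to be Step 2: producing a primitive of \(\nu\) that has compact vertical support and is still \(X\)-basic, while keeping track of inverse powers of \(u\). The remedy I would try first is to do all constructions on \(X_0\) using only \(Y\)-invariant data, namely the metric, \(\chi\) and \(\theta\), so that basicness is automatic. I would then use the standard Guillemin--Sternberg argument chart by chart, relying on the fact that everything is natural under the local diffeomorphisms \(s,t\).
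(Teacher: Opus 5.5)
Your outline is to show that $\nu=\mu-\pi^*\bigl(i^*\mu/e_G(E)\bigr)\,\tau_G(E)$ has a $d_G$-primitive with compact vertical support after inverting $u$, and then use Stokes and $(\pi_E)_*\tau_G(E)=1$. The paper instead cites the equivariant Thom isomorphism to write $[\mu]=i_*\beta$, and then uses $i^*i_*\beta=\beta\,e_G(E)$ and $\int^{\mathrm{orb}}_E i_*\beta=\int^{\mathrm{orb}}_X\beta$. The primitive you need is exactly what the Thom isomorphism supplies, and Step 2 as written does not produce it.

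First, $\chi\,\theta\,\nu\,(d_G\theta)^{-1}$ is undefined: $\theta=\langle\xi^\sharp,\cdot\rangle/|\xi^\sharp|^2$ blows up along $i(X)$, which is precisely where $\chi=1$. You need $1-\chi$ there.

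More seriously, the radial homotopy operator $K$ (with $Kd_G+d_GK=1-\pi^*i^*$) does not preserve compact vertical support. Suppose $\operatorname{supp}c\subset\{|v|\le R\}$, with norms taken in a $Y$-invariant fibre metric. Then on $\{|v|>R\}$ the form $Kc$ is invariant under the dilations $v\mapsto\lambda v$, $\lambda\ge 1$. So it has compact vertical support only if it vanishes there, and it need not vanish, even under your hypotheses. Take $E=\mathbb{C}^2$ over a point with the Hopf $S^1$-action, $r=|v|$, and $f\in C^\infty_c(0,\infty)$. Let $\vartheta$ be the dilation-invariant connection form of the free action on $\mathbb{C}^2\setminus\{0\}$, with $\iota_\xi\vartheta=1$. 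Then $c=d_G\bigl(f(r)\,dr\wedge\vartheta\bigr)$ is $d_G$-closed, compactly supported and satisfies $i^*c=0$. Yet $Kc=-\bigl(\int_0^\infty f\bigr)(d\vartheta-u)$ for large $r$. Without a compactly vertically supported primitive, Step 3 yields nothing.

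The repair is to use your two devices in the opposite order.

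1. Since $\pi^*i^*\nu=0$, you have $\nu=d_GK\nu$ on all of $E$. Here $K$ is built from $h_t(v)=tv$, which commutes with the fibrewise linear $Y$-action on $E_0$. So $K$ preserves basic invariant forms and anticommutes with $\iota_\xi$.
2. On $\{|v|>R\}$, which misses $\operatorname{supp}\nu$, the form $K\nu$ is $d_G$-closed. By your Step 1 it equals $d_G\lambda$ there, with $\lambda=\theta\,K\nu\,(d_G\theta)^{-1}$.
3. Take an invariant cutoff $\psi=\psi(|v|^2)$ that vanishes for $|v|\le R+1$ and equals $1$ for $|v|\ge R+2$. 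Set $\sigma:=K\nu-d_G(\psi\lambda)=(1-\psi)K\nu-d\psi\wedge\lambda$. This form is smooth and basic, has compact vertical support, and satisfies $d_G\sigma=\nu$ in the localized complex.

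With this $\sigma$, Step 3 goes through. This version needs only $i^*\tau_G(E)=e_G(E)$, $(\pi_E)_*\tau_G(E)=1$ and invertibility of $e_G(E)$. In effect it proves by hand the injectivity of $i^*$ on localized compact-vertical cohomology, which the paper takes from the full Thom isomorphism.
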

\begin{proof}
Let
\(
        r=\operatorname{rank}_{\mathbb R}E .
\)
Since \(E\to X\) is oriented and \(G\)-equivariant, there exists an equivariant
Thom form
\(
        \tau_G(E)\).

The equivariant Thom isomorphism is given by
\[
        i_*:
        H^\bullet_{G,\bas}(X)
        \longrightarrow
        H^{\bullet+r}_{G,\bas,\mathrm{cv}}(E),
        \qquad
        \beta\longmapsto
        \pi^*\beta\wedge \tau_G(E).
\]
It satisfies the self-intersection formula
\[
        i^*i_*(\beta)=\beta\, e_G(E).
\]

Let \([\mu]\in H^\bullet_{G,\bas,\mathrm{cv}}(E)\) be the cohomology class of
\(\mu\). By the equivariant Thom isomorphism, there exists a unique class
\[
        \beta\in H^{\bullet-r}_{G,\bas}(X)
\]
such that
\(
        [\mu]=i_*(\beta).
\)
Pulling back by the zero section gives
\[
        i^*[\mu]
        =
        i^*i_*(\beta)
        =
        \beta\, e_G(E).
\]
 In
localized equivariant cohomology,
\[
        \beta
        =
        \frac{i^*[\mu]}{e_G(E)}.
\]
Thus
\[
        [\mu]
        =
        i_*\left(\frac{i^*[\mu]}{e_G(E)}\right).
\]

Now integrate both sides over \(E\). Since integration is compatible with the
Gysin map, we have
\[
        \int_E^{\orb} i_*(\beta)
        =
        \int_X^{\orb}\beta .
\]
Therefore
\[
\begin{aligned}
        \int_E^{\orb}\mu
        &=
        \int_E^{\orb}
        i_*\left(\frac{i^*\mu}{e_G(E)}\right)        \\
        &=
        \int_X^{\orb}
        \frac{i^*\mu}{e_G(E)} .
\end{aligned}
\]
This proves the formula.
\end{proof}

\begin{theorem}[Integral localization formula for compact orbifolds]\label{T_local_formula}
Let
\[
        p:Y\longrightarrow BS^1
\]
be a Kan fibration of Lie groupoids with compact oriented orbifold fiber $X$.
Assume that the weak \(S^1\)-action preserves the orientation of \(X\). Let
\[
        X^{S^1}=\bigsqcup_F F
\]
be the fixed-point orbifold, written as a disjoint union of connected fixed
components. For each component, let
\[
        i_F:F\hookrightarrow X
\]
be the inclusion, and let
\[
        N_F:=TX|_F/TF
\]
be the normal orbibundle.

Let
\[
        \Omega^\bullet_{S^1,\bas}(X)
        =
        \bigl(\Omega^\bullet_{\bas}(X)[u]\bigr)^{S^1},
        \qquad
        \deg u=2,
\]
be the Cartan model of the equivariant basic complex, with differential
\[
        d_{S^1}=d-u\,\iota_\xi,
\]
where \(\xi\) is the generator of
\(
    \operatorname{Lie}(S^1).
\)

Then for every equivariantly closed form
\[
        \alpha\in \Omega^\bullet_{S^1,\bas}(X),
        \qquad
        d_{S^1}\alpha=0,
\]
one has the localization formula
\[
        \int_X^{\orb}\alpha
        =
        \sum_{F\subset X^{S^1}}
        \int_F^{\orb}
        \frac{i_F^*\alpha}{e_{S^1}(N_F)}
\]
as an identity in the localized ring
\(
        \mathbb R[u,u^{-1}].
\)
\end{theorem}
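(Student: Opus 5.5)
The strategy is the standard Atiyah--Bott/Berline--Vergne argument, carried out inside the orbifold basic complex. We reduce the global formula to a neighborhood of the fixed locus, and then apply the local localization formula on an equivariant vector bundle proved just above to each normal bundle \(N_F\). Throughout, we work with the Cartan model \(\bigl(\Omega^\bullet_{\bas}(X)[u]\bigr)^{S^1}\) with \(d_{S^1}=d-u\,\iota_\xi\). Here \(\xi^\#\) is the \(X\)-invariant vector field constructed in \eqref{E_inf_vectorfield}, and we use that \(\Omega^\bullet_{\bas}(X)\) is a \(G^\ast\)-algebra. We also use the fixed-subgroupoid theorem: since \(S^1\) is compact and connected, \(X^{S^1}\) is a full Lie subgroupoid, so each \(F\) is a compact suborbifold. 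Its normal bundle \(N_F\) is \(S^1\)-equivariant by the lemma on \(N\hookrightarrow N_0\rtimes Y\to BG\).

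\emph{Step 1: the fixed locus is the zero set of \(\xi^\#\).} Using the slice description \(\widehat\Gamma_x\ltimes U_x\) near any point, we check that \(\xi^\#\) vanishes at \(x\), modulo the finite group \(\Gamma_x\), exactly when \(p_1(\widehat\Gamma_x^0)=S^1\), i.e. when \(x\in X_0^{S^1}\). Away from \(X^{S^1}\), \(\xi^\#\) is therefore a nowhere vanishing \(X\)-invariant vector field.

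\emph{Step 2: exactness away from the fixed locus.} Choose an \(S^1\)-invariant orbifold metric \(h\) (Proposition \ref{P_inv_metric}) and set \(\theta:=h(\xi^\#,\cdot)\in\Omega^1_{\bas}(X)^{S^1}\). On \(X\setminus X^{S^1}\) the form \(d_{S^1}\theta=d\theta-u\,|\xi^\#|^2\) is invertible in \(\Omega^\bullet_{\bas}[u,u^{-1}]\), because \(d\theta\) is nilpotent. For closed \(\alpha\) we then get \(\alpha=d_{S^1}\bigl(\theta\,\alpha/d_{S^1}\theta\bigr)\) there. Let \(U=\bigsqcup_F U_F\) be an invariant tubular neighborhood of \(X^{S^1}\), identified via the invariant metric's exponential map with a disc bundle in \(N_F\). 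Take an \(X\)- and \(S^1\)-invariant cutoff \(\chi\) equal to \(1\) near \(X^{S^1}\) and supported in \(U\). Then
\[
\alpha-d_{S^1}\Bigl((1-\chi)\frac{\theta\,\alpha}{d_{S^1}\theta}\Bigr)
\]
is equivariantly closed and supported in \(U\). By orbifold Stokes, whose top-degree part of \(\iota_\xi\) integrates to zero, it has the same orbifold integral as \(\alpha\). Since all the ingredients are basic, this is a legitimate computation in \(\Omega^\bullet_{\bas}\).

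\emph{Step 3: local contribution.} The form just obtained restricts to \(U_F\subset N_F\) as an equivariantly closed form \(\mu_F\) with compact vertical support. Its zero-section restriction equals \(i_F^*\alpha\), since \(\chi\equiv1\) near \(F\). The fixed locus of \(N_F\) is its zero section, because \(S^1\) acts on normal fibers with no fixed vectors. Applying the local localization formula on an equivariant vector bundle gives \(\int_{N_F}^{\orb}\mu_F=\int_F^{\orb} i_F^*\alpha/e_{S^1}(N_F)\). Summing over \(F\) gives the theorem. Here \(e_{S^1}(N_F)\) is invertible after inverting \(u\), since its degree-zero part in \(u\) is a nonzero multiple of \(u^{\mathrm{rk}/2}\), the weights being nonzero.

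The main obstacle is the tubular-neighborhood identification in Step 3. We must show that the exponential map of the invariant metric gives an \(X\)- and \(Y\)-compatible identification of a neighborhood of \(X^{S^1}\) with a disc bundle in \(N_F\), i.e. a morphism of Kan fibrations over \(BS^1\). Our first attempt would be to work in the slice charts \(\widehat\Gamma_x\ltimes U_x\), where the weak action becomes a strict action of \(\widehat\Gamma_x^0\). There the linearization is classical, and the local pieces glue because the metric is \(Y\)-invariant. A secondary check is that orbifold Stokes, together with orbifold integration of \(\iota_{\xi^\#}\)-terms, behaves correctly, which we verify chartwise.
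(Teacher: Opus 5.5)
Your proof is correct. It follows the paper's overall plan: show the form is equivariantly exact off the fixed locus, cut off with an invariant bump function, apply Stokes, then use the local formula on each $N_F$ through an equivariant tubular neighbourhood. The two arguments genuinely differ only in how exactness on $X^c=X\setminus X^{S^1}$ is obtained.

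\textbf{The paper's route.} It reduces to homogeneous forms and multiplies by $u^N$ so the total degree is at least $d=\dim X$. It then invokes its locally free comparison theorem, $H^\bullet_{S^1,\mathrm{bas}}(X^c)\cong H^\bullet_{\mathrm{bas}}(X^c\rtimes BS^1)$. Combined with $\dim(X^c\rtimes BS^1)=d-1$, this gives an abstract primitive $\nu$ that is polynomial in $u$.

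\textbf{Your route.} You write down the Berline--Vergne primitive $\theta\alpha/d_{S^1}\theta$ in $\Omega^\bullet_{\mathrm{bas}}(X^c)[u,u^{-1}]$. Note that $\theta/|\xi^\#|^2$ is exactly an explicit connection form on $X^c$ in the sense of condition $(C)$. So your construction is a concrete substitute for Proposition~\ref{P_Z_C}, Theorem~\ref{T_C} and the dimension count together.

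\textbf{What each buys.} Your route gives an explicit primitive and needs no homogeneity or $u^N$ reduction. It also never applies the comparison theorem to the noncompact $X^c$; the paper states that theorem only for compact $X$, so its proof tacitly needs a locally finite version of the partition-of-unity argument. The paper's route, in exchange, uses only local freeness on $X^c$, which is immediate from stabilizers being proper closed subgroups of $S^1$. It does not need an invariant metric or the zero set of $\xi^\#$.

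\textbf{Two points to tidy.}
\begin{itemize}
\item Your cut-off form contains negative powers of $u$, while the local formula is stated for forms polynomial in $u$. Say explicitly that it extends by $\mathbb R[u,u^{-1}]$-linearity: multiply by $u^N$, which commutes with $d_{S^1}$.
\item In Step~1, the model $\widehat\Gamma_x\ltimes U_x$ is valid only at fixed points, since elsewhere the $Y$-orbits are one-dimensional. Argue instead as follows. Because $(s_Y,p_1)$ is a local diffeomorphism, the anchor of $Y$ at $x$ is $\xi\mapsto\xi^\#(x)$. Hence $\dim(Y\cdot x)=1-\dim\widehat\Gamma_x$, so $\xi^\#(x)=0$ if and only if $p_1(\widehat\Gamma_x)=S^1$.
\end{itemize}

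The tubular-neighbourhood identification you flag as the main obstacle is also taken for granted in the paper.
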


 \begin{proof}
We prove the formula first for $\alpha$ being a homogeneous equivariantly closed form
\[
        \mu\in \Omega^\bullet_{S^1,\bas}(X),
        \qquad
        d_{S^1}\mu=0.
\]
The general case follows by decomposing into homogeneous components. If the
total degree of \(\mu\) is smaller than \(\dim X\), we multiply by a sufficiently
large power of \(u\), prove the formula for \(u^N\mu\), and then divide by
\(u^N\) in the localized ring \(\mathbb R[u,u^{-1}]\). Hence we may assume that
the total degree of \(\mu\) is at least
\[
        d:=\dim X .
\]

By the \(S^1\)-equivariant tubular neighborhood theorem for orbifolds, there is a
\(S^1\)-invariant tubular neighborhood
\(
        U
\)
of \(X^{S^1}\) in \(X\), equivariantly identified with a neighborhood of the zero
section in the normal orbibundle \(N\).  We shall use the standard local
localization formula, obtained from the equivariant Thom form of \(N\), or
equivalently from formula (10.20) in Guillemin--Sternberg~\cite[Formula~(10.20)]{GuilleminSternberg1999}:
if
\[
        \eta\in \Omega^\bullet_{S^1,\bas,\mathrm{c}}(U)
\]
is \(d_{S^1}\)-closed and supported in \(U\), then
\[
        \int_{U}^{\orb}\eta
        =
        \int_{X^{S^1}}^{\orb}
        \frac{i^*\eta}{e_{S^1}(N)} .
\]
It is invertible
after localizing at \(u\), since \(N\) has no zero-weight summand along the
fixed locus.

Now set
\[
        X^c:=X\setminus X^{S^1} .
\]
On \(X^c\), the \(S^1\)-action is locally free.  Indeed, at a point outside the
fixed locus, the stabilizer is a proper closed subgroup of \(S^1\), hence is
finite.  Therefore the locally free comparison theorem gives
\[
        H^\bullet_{S^1,\bas}(X^c)
        \cong
        H^\bullet_{\bas}(X^c\rtimes BS^1).
\]
The quotient orbifold
\(
        X^c\rtimes BS^1
\)
has dimension
\(
        d-1.
\)
Hence
\[
        H^m_{S^1,\bas}(X^c)=0
        \qquad
        \text{for all }m\geq d .
\]
Since the total degree of \(\mu\) is at least \(d\), the restriction of \(\mu\)
to \(X^c\) is \(d_{S^1}\)-exact.  Thus there exists
\(
        \nu
\)
such that
\(
        \mu|_{X^c}=d_{S^1}\nu .
\)

Choose a \(S^1\)-invariant smooth function
\(
        \rho\in C^\infty(X)
\)
such that
\(
        \operatorname{supp}(\rho)\subset U,
\)
and such that
\(
        \rho=1
\)
on a smaller \(S^1\)-invariant tubular neighborhood
\[
         U'\subset  U
\]
of \(X^{S^1}\).  

On \(X^c\), define
\[
        \nu':=(1-\rho)\nu .
\]
Because \(1-\rho=0\) near \(X^{S^1}\), the form \(\nu'\) extends smoothly by zero
across \(X^{S^1}\) to a global equivariant form on \(X\).  Define
\[
        \mu':=\mu-d_{S^1}\nu' .
\]
Then
\[
        d_{S^1}\mu'=0,
\]
and
\[
        [\mu']=[\mu]
\]
in equivariant cohomology.  Moreover, \(\mu'\) is supported in \( U\).
On the smaller neighborhood \(U'\), we have 
\[
        \nu'=0,
\]
and
\[
        \mu'=\mu.
\]
In particular,
\[
        i^*\mu'=i^*\mu .
\]

Since \(X\) is compact and has no boundary, orbifold Stokes' theorem gives \def \orb{\mathrm{orb}}
\[
        \int_X^{\orb} d_{S^1}\nu'=0.
\]
Hence
\[
        \int_X^{\orb}\mu
        =
        \int_X^{\orb}\mu' .
\]
Because \(\mu'\) is supported in \(U\),
\[
        \int_X^{\orb}\mu'
        =
        \int_{ U}^{\orb}\mu' .
\]
Applying the local localization formula to \(\mu'\), we obtain
\[
        \int_{U}^{\orb}\mu'
        =
        \int_{X^{S^1}}^{\orb}
        \frac{i^*\mu'}{e_{S^1}(N)} .
\]
Since \(i^*\mu'=i^*\mu\), this becomes
\[
        \int_X^{\orb}\mu
        =
        \int_{X^{S^1}}^{\orb}
        \frac{i^*\mu}{e_{S^1}(N)} .
\]

Finally, writing
\[
        X^{S^1}=\bigsqcup_F F
\]
as the disjoint union of connected fixed components, and writing
\[
        i_F:F\hookrightarrow X
\]
for the inclusions, the last identity is exactly
\[
        \int_X^{\orb}\mu
        =
        \sum_{F\subset X^{S^1}}
        \int_F^{\orb}
        \frac{i_F^*\mu}{e_{S^1}(N_F)} ,
\]
where
\[
        N_F:=N_{F/X}.
\]
This proves the localization formula in
\(
        \mathbb R[u,u^{-1}] .
\)
\end{proof}

\section{Equivariant Chen--Ruan cohomology ring}\label{Sec_equiv_CR} 

Chen--Ruan cohomology is the standard ``stringy'' cohomology ring of an almost complex orbifold. It was introduced by Chen and Ruan~\cite{ChenRuan2004} as a new cohomology theory for orbifolds, motivated by orbifold string theory and degree-zero genus-zero orbifold Gromov--Witten theory. This Gromov-Witten theory is based on the moduli groupoids of stable holomorphic curves in a symplectic orbifold and is a quantum field theory on $H^\ast_\bas(IX)$ with proper degree shifting, where $IX$ is the inertia orbifold of $X$. Hence, it is a differential geometry type theory.  With the $G$ action, we have already equivariantize the cohomology $H^\ast_{\bas}(X)$ in the previous section. In this section, we show that $IX$ admits an induced $G$ action, hence we have $H^\ast_{G,\bas}(IX)$. The core of the Chen-Ruan theory is the 3-point function on $H^\ast_{\bas}(IX)$. We give its equivariant version in \S\ref{Subsec_equiv_3pt}, hence yield the equivariant Chen-Ruan cohomology ring.

\subsection{Review of the Chen--Ruan cohomology ring}

\def \IMor{\mathrm{IMor}}

We briefly review the construction of the Chen--Ruan cohomology ring of an
almost complex orbifold.  Let $X$
be a proper \'{e}tale Lie groupoid presenting an orbifold.  For a point
\(x\in X^0\), we denote its isotropy group by $\Gamma_x$.

For a tuple
\[
        \vec k=(k_1,\ldots,k_m),
\]
let \(\mathbb Z_{\vec k}\) be the group generated by
\(\lambda_1,\ldots,\lambda_m\) with relations
\[
        \lambda_1\cdots\lambda_m=1,
        \qquad
        \lambda_i^{k_i}=1
        \quad
        \text{for }i=1,\ldots,m .
\]
A strict $\vec k$ homomorphism
\[
        f:B \mathbb Z_{\vec k}\longrightarrow  X
\]
is called a $\vec k$-twisted sector if the restriction of $f_1$ on $\langle\lambda_i\rangle, 1\leq i\leq m,$ is injective.  \(f\) determines a point
\[
        x=f_0(\bullet)\in X_0
\]
together with 
$$
(g_1,\ldots, g_m)\in \Gamma_x^m,  
$$
where $g_i=f_1(\lambda_i)$ is of order $k_i$.
Let
\[
        X_{\vec k,0}
        =\{(x,g_1,\ldots, g_m)| \exists f: B\mathbb Z_{\vec k}\to X \mbox{ s.t. } f_0(\bullet)=x, f_1(\lambda_i)=g_i
        \}
\]
denote the space $\vec k$-twisted sectors of $X$.   
This is known to be a  smooth manifold. The projection of $X_{\vec k,0}\to X_0$
\[
(x,g_1,\ldots, g_m)\mapsto x
\]
induces an $X$ action on $X_{\vec k,0}$, set 
\[
X_{\vec k}=X_{\vec,0}\rtimes X.
\]
Define 
\begin{equation}
    X_k=X_{(k)}.
\end{equation}
There are evaluation maps
\begin{equation}
    ev_i: X_{\vec k}\to X_{k_i}\;\;\; \mbox{ such that }\;\;\;
    ev_{i,0}(x,g_1,\ldots,g_m)=(x,g_i).
\end{equation}
The inertia groupoid of $X$ is defined to be 
\[
IX=\bigsqcup_{k\geq 0} X_k
=\bigsqcup_{(g)} X_{(g)},
\]
where $(g)$ is a conjugate class of some group element. Therefore,
\[
        (IX)_0
        =
        \{(x,g)\mid x\in X^0,\ g\in \Gamma_x\},
\]
and
\[
        (IX)_1
        =
        \left\{
        \bigl((x,g),\alpha,(y,\alpha^{-1}g\alpha)\bigr)
        \ \middle|\
        \alpha:x\to y\text{ in }X_1
        \right\}.
\]
There is a natural involution
\[
         J:IX\longrightarrow IX
\]
defined by inversion in the isotropy group:
\[
        J_0(x,g)=(x,g^{-1}),
\]
and
\[
        J_1\bigl((x,g),\alpha,(y,\alpha^{-1}g\alpha)\bigr)
        =
        \bigl((x,g^{-1}),\alpha,(y,\alpha^{-1}g^{-1}\alpha)\bigr).
\]

As a vector space, Chen--Ruan cohomology is the de Rham cohomology of the
inertia orbifold: \def \CR{\mathrm{CR}}
\def \dR{\mathrm{dR}}
\[
        H^\ast_{\CR}(X)
        :=
        H^\ast_{\dR}
        (IX)
        =
        \bigoplus_{k\geq 1}
        H^\ast_{\dR}(X_{(k)}).
\]
We remark that $\Omega^\bullet_\bas(X)=\Omega^\bullet_\dR(X)$.
For the standard graded Chen--Ruan theory, one inserts the age shift: \def \age{\mathrm{age}}
\[
        H^d_{\CR}( X)
        =
        \bigoplus_{(g)}
        H^{d-2\age(g)}_{\dR}(X_{(g)}).
\]

The Chen--Ruan pairing is defined using the involution \(J\).  If
\(\omega_1,\omega_2\) are forms on a sector \(\mathsf X_{(g)}\), then
\[
        \langle \omega_1,\omega_2\rangle
        :=
        \int_{X_{(g)}}^{\orb}
        \omega_1\wedge  J^\ast\omega_2 .
\]
Thus the pairing pairs each sector with its inverse sector.

The Chen--Ruan product is defined using the three-sectors $X_{(\vec g)}$ where
\(
        (\vec g)=(g_1,g_2,g_3).
\)
There are evaluation maps 
\begin{equation}\label{E_ev0}
ev_{(\vec g),0}: (X_{(\vec g)})_0\to X_0 \Longrightarrow ev_{(\vec g)}:
X_{(\vec g)}\to X,
\end{equation}
and
\[
      ev_i:X_{(g_1,g_2,g_3)}
        \longrightarrow
        X_{(g_i)},
        \qquad
        i=1,2,3.
\]
Assume now that \(X\) is equipped with an almost complex structure.
Choose a compatible Hermitian metric.

The map \eqref{E_ev0} is an immersion.  Using the Hermitian metric, we obtain a
splitting
\[
        (ev_{(\vec g),0})^\ast TX_0
        \cong 
        TX_{(\vec g),0}\oplus N_{(\vec g),0}.
\]
Thus, at the orbifold level,
\[
         (ev_{(\vec g)})^\ast TX
        \cong 
        TX_{(\vec g)}\oplus N_{(\vec g)}.
\]
We call $N_{\vec g}$ 
 the normal bundle of the sector $X_{(\vec g)}$. 

 Let \( X_{(g)}\) be a connected component of
\(X_{(k)}\).  The element \(g\) acts on the normal bundle
\(N_{(g)}^0\), and we have an eigenbundle decomposition
\[
        N_{(g)}^0
        =
        \bigoplus_{\theta}
        N_{(g)}^0(\theta),
\]
where \(g\) acts on \(N_{(g)}^0(\theta)\) by multiplication by
\(
        \exp(2\pi i\theta),
        0<\theta<1.
\)

Define the formal fractional normal bundle of 
$X_{(g)}$ to be 
\begin{equation}\label{E_formal_normal}
        N_{(g),\Phi}^0
        :=
        \bigoplus_{\theta}
        \theta\, N_{(g)}^0(\theta) \;\;\;
        \Longrightarrow\;
        N_{(g),\Phi}
        :=\bigoplus_{\theta}
        \theta\,
        N_{(g)}(\theta):=
        \bigoplus_{\theta}
        \theta\,
        \bigl(N_{(g)}^0(\theta)\rtimes \mathsf X\bigr).
\end{equation}

The obstruction bundle on the three-sector is described intrinsically by the
following \(K\)-theoretic formula, due to the work of Chen--Hu,
Jarvis--Kaufmann--Kimura, and Hu--Wang~\cite{ChenHu2006,JarvisKaufmannKimura2007,HuWang2013}:
\[
        ev_1^\ast  N_{(g_1),\Phi}
        +
        ev_2^\ast  N_{(g_2),\Phi}
        +
        ev_3^\ast  N_{(g_3),\Phi}
        -
     N_{(g_1,g_2,g_3)}
\]
is represented by an actual vector bundle over
\( X_{(g_1,g_2,g_3)}\).  We denote this bundle by
\[
\mathcal        O_{(g_1,g_2,g_3)} .
\]
It is the Chen--Ruan obstruction bundle.

Let
\[
        \alpha_i\in H^\ast_{\dR}(\mathsf X_{(g_i)}),
        \qquad
        i=1,2,3.
\]
The Chen--Ruan three-point function is
\[
        \Psi(\alpha_1,\alpha_2,\alpha_3)
        =
        \int_{\mathsf X_{(g_1,g_2,g_3)}}^{\orb}
        ev_1^\ast\alpha_1
        \wedge
        ev_2^\ast\alpha_2
        \wedge
        ev_3^\ast\alpha_3
        \wedge
        e\bigl(\mathcal O_{(g_1,g_2,g_3)}\bigr).
\]
Here
\(
        e(\mathcal O_{(g_1,g_2,g_3)})
\)
is the Euler class of the obstruction bundle.

The Chen--Ruan product is characterized by the identity
\[
        \langle
        \alpha_1\star_{\CR}\alpha_2,\alpha_3
        \rangle
        =
        \Psi(\alpha_1,\alpha_2,\alpha_3).
\]
Therefore the ring structure on
\(
        H^\ast_{\CR}(\mathsf X)
\)
is determined by the sector pairing and the three-point functions.

\subsection{The induced weak \(G\)-action on the inertia groupoid}

Let
\[
        p:Y\longrightarrow BG
\]
be a Kan fibration of Lie groupoids with fiber \(X\) being a proper \'{e}tale Lie groupoid, so that
\(X\) presents an orbifold. 

Recall that the inertia groupoid \(IX\) has object space
\[
        (IX)_0
        =
        \{(x,g)\mid x\in X_0,\ g\in \Gamma_x\}.
\]
The peojection map 
\[
ev: (IX)_0\to X_0=Y_0
\]
not only admits an $X$ action but also an $Y$ action. Set 
\[
Y_I=(IX)_0\rtimes Y=((Y_I)_1\rrto (Y_I)_0).
\]
We explain the $Y$ action on $(IX)_0$. Let $\gamma:x\to y$ be an arrow in $Y$, it maps $(x,g)$ 
to $(y, h)$ where $h=\gamma^{-1}g\gamma)$. We should verify that $h\in \Gamma_y$. In fact
\[
p_1(h)=p_1(\gamma^{-1})p_1(g)p_1(\gamma)=e.
\]
Therefore, $h\in X_1$ and it is an arrow from $y$ to $y$, so $h\in \Gamma_y$. 
Next, suppose 
\[
x_1\xrightarrow{\gamma_1}x_2\xrightarrow{\gamma_2}x_3
\]
are arrows in $Y$ and $\gamma_1\gamma_2:x_1\to x_3 $. Then 
\begin{equation*}
(x_1,g_1)\xrightarrow{\gamma_1} (x_2, \gamma_1^{-1}g\gamma_1)\xrightarrow{\gamma_2} (x_2, \gamma_2^{-1}\gamma_1^{-1}g\gamma_1\gamma_2)
\end{equation*}
agrees with 
\[
(x_1,g_1)\xrightarrow{\gamma_1\gamma_2} (x_3, (\gamma_1\gamma_2)^{-1}g(\gamma_1\gamma_2)).
\]
This verifies the $Y$ action on $(IX)_0$. Hence, 
\[
(Y_I)_0=(IX)_0
\]
and
\[
(Y_I)_1=(IX)_0\times_{ev,Y_0,s_Y} Y_1. 
\]

There is a natural homomorphism
\[
        p_I:Y_I\longrightarrow BG
\]
defined on objects by the unique map to the object of \(BG\), and on arrows by
\[
        (p_I)_1(\gamma,(x,g)):=p_1(\gamma).
\]

\begin{prop}
The homomorphism
\[
        p_I:Y_I\longrightarrow BG
\]
is a Kan fibration.  Its fiber is the inertia groupoid \(IX\).
\end{prop}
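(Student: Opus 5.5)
We have to check two things: that $p_I$ is a homomorphism whose arrow map makes $(s_{Y_I},(p_I)_1)$ a surjective submersion, and that the fiber over $e$ is $IX$.

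\textbf{Homomorphism.} First we record that $Y_I=(IX)_0\rtimes Y$ is a Lie groupoid. The space $(Y_I)_1=(IX)_0\times_{ev,Y_0,s_Y}Y_1$ is a manifold because $s_Y$ is a submersion. The source is $s_{Y_I}(\gamma,(x,g))=(x,g)$ and the target is $(y,\gamma^{-1}g\gamma)$, as computed above. Multiplication is $(\gamma_1,(x_1,g))(\gamma_2,(x_2,\gamma_1^{-1}g\gamma_1))=(\gamma_1\gamma_2,(x_1,g))$. The action computation preceding the statement shows these maps are well defined and associative. Since $(p_I)_1$ is $p_1$ composed with the projection to $Y_1$, and $p_1$ is multiplicative, $p_I$ is a homomorphism.

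\textbf{Kan condition.} We must show that
\[
(s_{Y_I},(p_I)_1):(IX)_0\times_{Y_0}Y_1\to (IX)_0\times G,\qquad (\gamma,(x,g))\mapsto ((x,g),p_1(\gamma)),
\]
is a surjective submersion. This map is the base change of $(s_Y,p_1):Y_1\to Y_0\times G$ along $ev\times\mathrm{id}:(IX)_0\times G\to Y_0\times G$. Indeed, $(IX)_0\times_{Y_0}Y_1\cong((IX)_0\times G)\times_{Y_0\times G}Y_1$ via $(\gamma,(x,g))\mapsto(((x,g),p_1(\gamma)),\gamma)$. Surjective submersions are stable under pullback, so the Kan condition follows from the Kan condition for $p$.

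\textbf{Fiber.} The fiber has object space $(IX)_0$. Its arrows are the pairs $(\gamma,(x,g))$ with $p_1(\gamma)=e$, i.e.\ $\gamma\in X_1$. These are exactly the arrows $((x,g),\gamma,(y,\gamma^{-1}g\gamma))$ of $IX$, and the structure maps agree, so $\ker(p_I)=(IX)_0\rtimes X=IX$.

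\textbf{Main obstacle.} The step needing care is that $(IX)_0$ is a smooth manifold and $ev$ is smooth, which the paper takes as known for proper étale $X$. A second subtlety is that the $Y$-action on $(IX)_0$ is smooth. In a local slice around $x$, $Y$ is modeled by $\widehat\Gamma_x\ltimes U_x$ and $(IX)_0$ by pairs $(z,g)$ with $g\in\Gamma_x$ fixing $z$. The action is then conjugation, $(z,g)\mapsto(\gamma z,\gamma^{-1}g\gamma)$. This is smooth because $\Gamma_x$ is finite and normal in $\widehat\Gamma_x$ (by \eqref{E_isotropy}), so the conjugation $\gamma\mapsto\gamma^{-1}g\gamma$ is locally constant on components of $(IX)_0$. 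I would verify this locally and then globalize.
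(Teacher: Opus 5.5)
Your argument is correct, and it gives more of a proof than the paper does. The paper calls the statement straightforward and offers only the formal rewriting $Y_I=(IX)_0\rtimes(X\rtimes BG)=((IX)_0\rtimes X)\rtimes BG=IX\rtimes BG$. Since $X\rtimes BG$ is only notation for the total space of a Kan fibration, this explains why the fiber should be $IX$, but it never checks the submersion condition.

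You instead show that $(s_{Y_I},(p_I)_1)$ is the base change of the surjective submersion $(s_Y,p_1)$ along $ev\times\mathrm{id}$. That is exactly the missing verification, and you also identify the fiber arrow by arrow. Your base-change argument uses nothing specific to the inertia. For any smooth $Y$-space $\mu:M\to Y_0$, the action groupoid $M\rtimes Y\to BG$ is a Kan fibration with fiber $M\rtimes X$. So the same argument also justifies the Kan fibrations $X^G_0\rtimes Y\to BG$ and $N_0\rtimes Y\to BG$ that the paper asserts elsewhere. The paper's formal identity would instead have to be repeated case by case.

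The inertia enters only in showing that $Y_I$ is a Lie groupoid, i.e.\ that the $Y$-action on $(IX)_0$ is smooth. Here the slice argument you sketch should be dropped, for two reasons:
\begin{enumerate}
\item It presupposes that $Y$ is proper, which is not among the hypotheses. $G$ need not be compact here; take $X$ a point and $Y=B\mathbb R$.
\item Even for proper $Y$, the model $\widehat\Gamma_x\ltimes U_x$ describes $Y$ only over a slice transverse to the $Y$-orbit of $x$. It does not describe $Y$ over a neighborhood of $x$ in $Y_0$ unless that orbit is discrete.
\end{enumerate}
None of this is needed. Because $p_1$ is a submersion, $X_1=p_1^{-1}(e)$ is an embedded submanifold of $Y_1$. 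The set $(IX)_0=\{g\in X_1\mid s_X(g)=t_X(g)\}$ is an embedded submanifold of $X_1$; this is the standard smooth structure for proper \'etale $X$. The action $(g,\gamma)\mapsto\gamma^{-1}g\gamma$ is a composite of $i_Y$ and $m_Y$. By the paper's computation it lands in $(IX)_0$, so it is smooth as a map into $(IX)_0$.
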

The proof is straightforward, we omit the details. Here 
We give a formal explanation. 
Since 
\[
Y_I=(IX)_0\rtimes Y
=(IX)_0\rtimes (X\rtimes BG)
=((IX)_0\rtimes X)\rtimes BG=IX\rtimes BG,
 \]
we have $p_I: Y_I\to BG$ whose fiber is $IX$.

Since $IX$ is  an orbifold with the $G$ action, we have 
\begin{equation}
    H^d_{G,CR}(X)=
     \bigoplus_{(g)}
        H^{d-2\age(g)}_{G,\dR}(X_{(g)}).
\end{equation}

\subsection{Equivariant Chen--Ruan cohomology}\label{Subsec_equiv_3pt}

Let \(\mathsf X\) be an almost complex orbifold equipped with a weak
\(G\)-action given by the Kan fibration \eqref{E_KF}.
We may modify the almost complex structure such that it is $G$-invariant. Here we use the fact that $G$ is compact.

A clean equivariant three-point function is the ordinary Chen--Ruan three-point function with every ingredient replaced by its G-equivariant version: equivariant sector cohomology, equivariant evaluation pullbacks, equivariant obstruction Euler class, and equivariant orbifold integration.

For equivariant classes
\[
        \alpha_i\in
        H^\bullet_{G,\dR}\bigl(\mathsf X_{(g_i)}\bigr),
        \qquad i=1,2,3,
\]
define the sectorwise equivariant Chen--Ruan three-point function by
\[
        \Psi_{G,(g_1,g_2,g_3)}
        (\alpha_1,\alpha_2,\alpha_3)
        :=
        \int_{ X_{(g_1,g_2,g_3)}}^{\orb,G}
        ev_{1}^*\alpha_1
        \wedge
        ev_2^*\alpha_2
        \wedge
        ev_3^*\alpha_3
        \wedge
        e_G\bigl(\mathcal O_{(g_1,g_2,g_3)}\bigr).
\]
The value lies in
\[
        H^\bullet_G(\mathrm{pt})
        =
        S(\mathfrak g^*)^G .
\]
This naturally defines an equivariant three-point function 
\begin{equation}
    \Psi_G: \bigl( H^\bullet_{G,CR}(IX)\bigr)^{\otimes 3}\to S(\mathfrak g^\ast)^G.
\end{equation}

The equivariant Chen--Ruan pairing is
\begin{equation}\label{E_PP}
        \langle \alpha,\beta\rangle_G
        :=
        \int_{{IX}}^{\orb,G}
        \alpha\wedge  J_G^*\beta ,
\end{equation}
where
\[
        J_G:
        {IX}\rtimes \mathsf B G
        \longrightarrow
        {IX}\rtimes \mathsf B G
\]
is the \(G\)-equivariant extension of the involution
$J: IX\to IX$.

The equivariant Chen--Ruan product
\(
        \star_{\CR,G}
\)
is characterized by
\[
        \left\langle
        \alpha_1\star_{\CR,G}\alpha_2,
        \alpha_3
        \right\rangle_G
        =
        \Psi_G(\alpha_1,\alpha_2,\alpha_3)
\]
if the pairing \eqref{E_PP} is non-degenerated.

We remark that the equivariant Poincare duality may not hold for arbitrary manifold/orbifold, while the equivariant three-point function is well defined.

\section*{Statements and Declarations}

\subsection*{Funding}
This work was supported by the National Key R\&D Program of China (No.~2020YFA0714000).

\subsection*{Competing interests}
The authors have no competing interests to declare.

\subsection*{Data availability}
Data sharing is not applicable to this article as no datasets were generated or analyzed during the current study.
\makeatletter\immediate\write\@auxout{\string\citation{bst-control}}\makeatother
\bibliography{references}

@settings{bst-control,
  options = {sort},
  key = {bst-control}
}

@article{Behrend2005,
  author  = {Behrend, Kai},
  title   = {On the de {R}ham cohomology of differential and algebraic stacks},
  journal = {Advances in Mathematics},
  volume  = {198},
  number  = {2},
  pages   = {583--622},
  year    = {2005},
  doi     = {10.1016/j.aim.2005.05.025}
}

@article{BehrendXu2011,
  author  = {Behrend, Kai and Xu, Ping},
  title   = {Differentiable stacks and gerbes},
  journal = {Journal of Symplectic Geometry},
  volume  = {9},
  number  = {3},
  pages   = {285--341},
  year    = {2011},
  doi     = {10.4310/JSG.2011.v9.n3.a2}
}

@article{Blohmann2008,
  author  = {Blohmann, Christian},
  title   = {Stacky {L}ie groups},
  journal = {International Mathematics Research Notices},
  year    = {2008},
  pages   = {rnn082},
  doi     = {10.1093/imrn/rnn082}
}

@article{BottShulmanStasheff1976,
  author  = {Bott, Raoul and Shulman, Harold and Stasheff, James D.},
  title   = {On the de {R}ham theory of certain classifying spaces},
  journal = {Advances in Mathematics},
  volume  = {20},
  number  = {1},
  pages   = {43--56},
  year    = {1976},
  doi     = {10.1016/0001-8708(76)90169-9}
}

@article{BursztynNosedaZhu2020,
  author  = {Bursztyn, Henrique and Noseda, Francesco and Zhu, Chenchang},
  title   = {Principal actions of stacky {L}ie groupoids},
  journal = {International Mathematics Research Notices},
  year    = {2020},
  number  = {16},
  pages   = {5055--5125},
  doi     = {10.1093/imrn/rny142}
}

@article{ChenHu2006,
  author  = {Chen, Bohui and Hu, Shengda},
  title   = {A de {R}ham model for {C}hen--{R}uan cohomology ring of abelian orbifolds},
  journal = {Mathematische Annalen},
  volume  = {336},
  pages   = {51--71},
  year    = {2006},
  doi     = {10.1007/s00208-006-0774-3}
}

@article{ChenRuan2004,
  author  = {Chen, Weimin and Ruan, Yongbin},
  title   = {A new cohomology theory of orbifold},
  journal = {Communications in Mathematical Physics},
  volume  = {248},
  pages   = {1--31},
  year    = {2004},
  doi     = {10.1007/s00220-004-1089-4}
}

@article{delHoyo2013,
  author  = {{del Hoyo}, Matias L.},
  title   = {Lie groupoids and their orbispaces},
  journal = {Portugaliae Mathematica},
  volume  = {70},
  number  = {2},
  pages   = {161--209},
  year    = {2013},
  doi     = {10.4171/PM/1930}
}

@phdthesis{DuLi2014,
  author = {Li, Du},
  title  = {Higher Groupoid Actions, Bibundles, and Differentiation},
  school = {Georg-August-Universit{\"a}t G{\"o}ttingen},
  year   = {2014},
  url    = {https://arxiv.org/abs/1512.04209}
}

@article{EdidinJarvisKimura2010,
  author  = {Edidin, Dan and Jarvis, Tyler J. and Kimura, Takashi},
  title   = {Logarithmic trace and orbifold products},
  journal = {Duke Mathematical Journal},
  volume  = {153},
  number  = {3},
  pages   = {427--473},
  year    = {2010},
  doi     = {10.1215/00127094-2010-028}
}

@article{FantechiGottsche2003,
  author  = {Fantechi, Barbara and G{\"o}ttsche, Lothar},
  title   = {Orbifold cohomology for global quotients},
  journal = {Duke Mathematical Journal},
  volume  = {117},
  number  = {2},
  pages   = {197--227},
  year    = {2003},
  doi     = {10.1215/S0012-7094-03-11721-4}
}

@misc{GinotNoohi2012,
  author = {Ginot, Gr{\'e}gory and Noohi, Behrang},
  title  = {Group actions on stacks and applications to equivariant string topology for stacks},
  year   = {2012},
  note   = {arXiv:1206.5603},
  url    = {https://arxiv.org/abs/1206.5603}
}

@article{GoldinHolmKnutson2007,
  author  = {Goldin, Rebecca and Holm, Tara S. and Knutson, Allen},
  title   = {Orbifold cohomology of torus quotients},
  journal = {Duke Mathematical Journal},
  volume  = {139},
  number  = {1},
  pages   = {89--139},
  year    = {2007},
  doi     = {10.1215/S0012-7094-07-13912-7}
}

@book{GuilleminSternberg1999,
  author    = {Guillemin, Victor and Sternberg, Shlomo and Br{\"u}ning, Jochen},
  title     = {Supersymmetry and Equivariant de {R}ham Theory},
  series    = {Mathematics Past and Present},
  publisher = {Springer},
  address   = {Berlin},
  year      = {1999},
  doi       = {10.1007/978-3-662-03992-2}
}

@article{HilsumSkandalis1987,
  author  = {Hilsum, Michel and Skandalis, Georges},
  title   = {Morphismes {$K$}-orient{\'e}s d'espaces de feuilles et fonctorialit{\'e} en th{\'e}orie de {K}asparov},
  journal = {Annales Scientifiques de l'{\'E}cole Normale Sup{\'e}rieure},
  volume  = {20},
  number  = {3},
  pages   = {325--390},
  year    = {1987},
  doi     = {10.24033/asens.1537}
}

@article{HolmMatsumura2012,
  author  = {Holm, Tara S. and Matsumura, T.},
  title   = {Equivariant cohomology for {H}amiltonian torus actions on symplectic orbifolds},
  journal = {Transformation Groups},
  volume  = {17},
  number  = {3},
  pages   = {717--746},
  year    = {2012},
  doi     = {10.1007/s00031-012-9192-7}
}

@article{HuWang2013,
  author  = {Hu, Jianxun and Wang, Bai-Ling},
  title   = {Delocalized {C}hern character for stringy orbifold {$K$}-theory},
  journal = {Transactions of the American Mathematical Society},
  volume  = {365},
  number  = {12},
  pages   = {6309--6341},
  year    = {2013},
  doi     = {10.1090/S0002-9947-2013-05834-5}
}

@article{JarvisKaufmannKimura2007,
  author  = {Jarvis, Tyler J. and Kaufmann, Ralph and Kimura, Takashi},
  title   = {Stringy {$K$}-theory and the {C}hern character},
  journal = {Inventiones Mathematicae},
  volume  = {168},
  number  = {1},
  pages   = {23--81},
  year    = {2007},
  doi     = {10.1007/s00222-006-0026-x}
}

@article{Lerman2010,
  author  = {Lerman, Eugene},
  title   = {Orbifolds as stacks?},
  journal = {L'Enseignement Math{\'e}matique},
  volume  = {56},
  number  = {3--4},
  pages   = {315--363},
  year    = {2010},
  doi     = {10.4171/LEM/56-3-4}
}

@book{Mackenzie2005,
  author    = {Mackenzie, Kirill C. H.},
  title     = {General Theory of {L}ie Groupoids and {L}ie Algebroids},
  publisher = {Cambridge University Press},
  address   = {Cambridge},
  year      = {2005},
  doi       = {10.1017/CBO9781107325883}
}

@article{MathaiQuillen1986,
  author  = {Mathai, Varghese and Quillen, Daniel},
  title   = {Superconnections, {T}hom classes, and equivariant differential forms},
  journal = {Topology},
  volume  = {25},
  number  = {1},
  pages   = {85--110},
  year    = {1986},
  doi     = {10.1016/0040-9383(86)90007-8}
}

@book{MoerdijkMrcun2003,
  author    = {Moerdijk, Ieke and Mr{\v{c}}un, Janez},
  title     = {Introduction to Foliations and {L}ie Groupoids},
  series    = {Cambridge Studies in Advanced Mathematics},
  volume    = {91},
  publisher = {Cambridge University Press},
  address   = {Cambridge},
  year      = {2003},
  doi       = {10.1017/CBO9780511615450}
}

@article{PflaumPosthumaTang2014,
  author  = {Pflaum, Markus J. and Posthuma, Hessel and Tang, Xiang},
  title   = {Geometry of orbit spaces of proper {L}ie groupoids},
  journal = {Journal f{\"u}r die reine und angewandte Mathematik},
  volume  = {694},
  pages   = {49--84},
  year    = {2014},
  doi     = {10.1515/crelle-2012-0092}
}

@incollection{Watts2022,
  author    = {Watts, Jordan},
  title     = {The orbit space and basic forms of a proper {L}ie groupoid},
  booktitle = {Current Trends in Analysis, its Applications and Computation},
  series    = {Trends in Mathematics: Research Perspectives},
  publisher = {Birkh{\"a}user},
  address   = {Cham},
  pages     = {513--523},
  year      = {2022},
  doi       = {10.1007/978-3-030-87502-2_52}
}

@unpublished{ChenDuJiangSubmitted,
  author = {Chen, Bohui and Du, Cheng-Yong and Jiang, Fengyu},
  title  = {Automorphisms of Lie Groupoids and Symplectic Reduction on Orbifolds},
  note   = {Submitted manuscript},
  year   = {2025}
}

\end{document}